\newtheorem{thm}{Theorem}[section]
\newtheorem{cor}[thm]{Corollary}
\newtheorem{lem}[thm]{Lemma}
\newtheorem{prop}[thm]{Proposition}
\newtheorem{conj}[thm]{Conjecture}
\theoremstyle{definition}
\newtheorem{defn}[thm]{Definition}
\newtheorem{assm}[thm]{Assumption}
\newtheorem{rmk}[thm]{Remark}
\newtheorem{ques}[thm]{Question}
\DeclareMathOperator{\Hom}{Hom} 
 \DeclareMathOperator{\rk}{rk}
 \DeclareMathOperator{\Sym}{Sym}
\DeclareMathOperator{\Pic}{Pic} 
\DeclareMathOperator{\Span}{Span}
\newcommand{\C}{\ensuremath\mathds{C}}
\newcommand{\Z}{\ensuremath\mathds{Z}}
\newcommand{\Q}{\ensuremath\mathds{Q}}
\newcommand{\fa}{\ensuremath\mathfrak{a}}
\newcommand{\fb}{\ensuremath\mathfrak{b}}
\newcommand{\PP}{\ensuremath\mathds{P}}
\newcommand{\calO}{\ensuremath\mathcal{O}}
\newcommand{\calF}{\ensuremath\mathscr{F}}
\newcommand{\HH}{\ensuremath\mathrm{H}}
\newcommand{\CH}{\ensuremath\mathrm{CH}}
\begin{document}
\title{Hyperk\"ahler manifolds of Jacobian type}
\author{Mingmin Shen}
\thanks{The author is supported by Simons Foundation as a Simons Postdoctoral Fellow.}
\address{DPMMS, University of Cambridge, Wilberforce Road, Cambridge CB3 0WB, UK}
\email{M.Shen@dpmms.cam.ac.uk}

\subjclass[2010]{14C30, 14E08, 14F25}

\keywords{hyperk\"ahler manifolds, Hodge classes, cubic fourfolds,
rationality}
\date{\today}

\begin{abstract}
In this paper we define the notion of a hyperk\"ahler manifold
(potentially) of Jacobian type. If we view hyperk\"ahler manifolds
as ``abelian varieties", then those of Jacobian type should be
viewed as ``Jacobian varieties". Under a minor assumption on the
polarization, we show that a very general polarized hyperk\"ahler
fourfold $F$ of $K3^{[2]}$-type is not of Jacobian type. As a
potential application, we conjecture that if a cubic fourfold is
rational then its variety of lines is of Jacobian type. Under some
technical assumption, it is proved that the variety of lines on a
rational cubic fourfold is potentially of Jacobian type. We also
prove the Hodge conjecture in degree 4 for a generic $F$ of $K3^{[2]}$-type.
\end{abstract}

\maketitle

\section{Introduction}
The study of weight one Hodge structures leads to the theory of
abelian varieties. Among all principally polarized abelian
varieties, there is a special class called Jacobian varieties which
corresponds to Hodge structures of curves. A corresponding theory
for higher weight Hodge structures is still missing. In this paper,
we propose a possible direction of a theory of ``Jacobians" for
weight two Hodge structures.

We do not know yet what the ``principally polarized abelian
varieties" for weight two Hodge structures should be. The most
typical weight two Hodge structure is $\HH^2(S,\Z)$ where $S$ is a
smooth projective surface, which is equipped with the intersection blinear
form. A ``p.p.a.v." in weight two should be a variety $X$
whose cohomology group $\HH^2(X,\Z)$ carries a natural nondegenerate
bilinear form that is compatible with the Hodge structure. We know
one class of such varieties, namely the hyperk\"ahler manifolds $F$
whose cohomology group $\HH^2(F,\Z)$ carries the canonical
Beauville--Bogomolov bilinear form, see \cite{beauville},
\cite{huybrechts}. Examples of such manifolds include generalized Kummer varieties, Hilbert scheme of points
on a $K3$ surface and the variety of lines on a smooth cubic fourfold.

To motivate the definition of ``Jacobians" in weight two, we recall
a characterization of Jacobian of curves. Let $(A,\Theta)$ be a
principally polarized abelian variety of dimension $g$. Then a
result of Matsusaka \cite{matsusaka} says that $A$ is a Jacobian if
and only if there exists a curve $f:C\to A$ such that
$f_*[C]=\frac{[\Theta]^{g-1}}{(g-1)!}$ is the 1-dimensional minimal
cohomology class. Or in other words, $A$ is a Jacobian if and only
if the 1-dimensional minimal cohomology class
$\frac{[\Theta]^{g-1}}{(g-1)!}$ is effective. We define the
``Jacobian" in weight two as follows.
\begin{defn}
Let $F$ be a hyperk\"ahler manifold of dimension $2n$. Let $b(-,-)$
be the Beauville--Bogomolov bilinear form on $\HH^2(F,\Z)$. Let
$$
\mathrm{Alg}^{2p}(F)=\mathrm{Im}(cl:\mathrm{CH}^p(F)\to\HH^{2p}(F,\Z)),\quad
\mathrm{Hdg}^{2p}(F)=\HH^{2p}(F,\Z)\cap \HH^{p,p}(F)
$$
be the subgroups of algebraic and Hodge classes. We define the
\textit{transcendental lattice} of $F$ to be
$$
\HH^2(F,\Z)_{\mathrm{tr}}=\mathrm{Alg}^2(F)^\perp
$$
with the restriction of the bilinear form $b$. A \textit{minimal
class} is an element $\theta\in\HH^{4n-4}(F,\Z)$ such that
$$
(\theta\cdot\alpha_1\cdot\alpha_2)_F=b(\alpha_1,\alpha_2),\quad\forall\alpha_1,\alpha_2\in\HH^2(F,\Z)_\mathrm{tr}
$$
A \textit{minimal Hodge class} is a minimal class that is also a
Hodge class. A projective hyperk\"ahler manifold $F$ is of
\textit{Jacobian type} if there exists a surface $f:S\to F$ such that
$f_*[S]$ is a minimal class.
\end{defn}
By definition, a necessary condition for $F$ to be of Jacobian type
is that it admits a minimal Hodge class. It is easy to see that if
$F=S^{[n]}$ for some $K3$-surface $S$, then $F$ is of Jacobian type.
In fact, we can fix $n-1$ general points on $S$ and let the
$n^\text{th}$ point vary and get a surface $\tilde{S}\subset F$. We
can take $\theta$ to be the cycle class of $\tilde{S}$ and verify
that $F$ is of Jacobian type. From now on, we will restrict
ourselves to the case where $F$ is of $K3^{[2]}$-type, i.e. deformation equivalent to $S^{[2]}$. The following
definitions will be useful throughout this article.

\begin{defn}
Let $F$ be a hyperk\"ahler manifold of $K3^{[2]}$-type.
Let $b(-,-)$ be the Beauville--Bogomolov bilinear form on
$\HH^2(F,\Z)$. An element $\alpha\in\HH^2(F,\Z)$ is
\textit{primitive} if it is not divisible (in $\HH^2(F,\Z)$) by any
integer greater than 1. A primitive element $\alpha\in\HH^2(F,\Z)$
is \textit{even} if $b(\alpha,\alpha')\in 2\Z$, for all
$\alpha'\in\HH^2(F,\Z)$; otherwise, $\alpha$ is called \textit{odd}.
An element $\delta\in\HH^2(F,\Z)$ is \textit{exceptional} if (i)
$b(\delta,\delta)=-2$ and (ii) $\delta$ is even. Let
$\Omega(F)\subset\HH^2(F,\Z)$ be the set of all exceptional
elements. We use $\Omega_0(F)$ to denote $\Omega(F)/\{\pm1\}$. A
\textit{polarization} $\lambda_0\in\Pic(F)$ of $F$ is the class of
an ample divisor. We say that $(F,\lambda_0)$ is \textit{primitively
polarized} if furthermore $\lambda_0$ is primitive.
\end{defn}
\begin{rmk}
An exceptional class $\delta$ is always primitive and satisfies
$b(\delta,\HH^2(F,\Z))=2\Z$. Let $\delta^\perp$ be the orthogonal
complement (with respect to $b(-,-)$) of $\delta$. Then
$(\delta^\perp,b)$ is isomorphic to the $K3$-lattice and
$$
\HH^2(F,\Z)=\Z\delta\oplus\delta^\perp.
$$
We will show that if $\alpha$ is even, then $\alpha^2-\delta^2$ is
divisible by $8$ in $\HH^4(F,\Z)$, for all $\delta\in\Omega(F)$.
\end{rmk}

For a primitively polarized hyperk\"ahler manifold, $(F,\lambda_0)$,
of $K3^{[2]}$-type, we will refer to the following technical
assumption frequently.
\begin{assm}\label{technical assumption}
Either the polarization $\lambda_0$ is odd or it is even and
$\frac{10+b(\lambda_0,\lambda_0)}{8}$ is an even integer.
\end{assm}

The first main result of this paper is that a generic deformation of
$S^{[2]}$ is not of Jacobian type; see Corollary \ref{jacobian picard rank}.

\begin{thm}\label{main theorem on generic nonjacodbian}
Let $(F,\lambda_0)$ be a primitively polarized hyperk\"ahler manifold of
$K3^{[2]}$-type which satisfies Assumption \ref{technical
assumption}. If $F$ has a minimal Hodge class, then $\Pic(F)$ has
rank at least 2. In particular, a very general such $(F,\lambda_0)$
has no minimal Hodge class and hence is not of Jacobian type.
\end{thm}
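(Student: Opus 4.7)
The plan is to prove the contrapositive: assume $\rk\Pic(F)=1$, so that $\Pic(F)=\Z\lambda_0$ and $\HH^2(F,\Z)_{\mathrm{tr}}=\lambda_0^\perp$, and derive a contradiction from the existence of a minimal Hodge class $\theta\in\mathrm{Hdg}^4(F,\Z)$ under Assumption \ref{technical assumption}. My starting point is Verbitsky's injectivity theorem, which together with the Betti number equality $b_4(F)=276=\binom{24}{2}$ for $K3^{[2]}$-type forces $\Sym^2\HH^2(F,\Q)\xrightarrow{\sim}\HH^4(F,\Q)$. Every $\gamma\in\HH^4(F,\Q)$ is then uniquely determined by its Fujiki bilinear form $B_\gamma(\alpha_1,\alpha_2)=(\gamma\cdot\alpha_1\cdot\alpha_2)_F$ on $\HH^2(F,\Q)$. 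Writing $q_F^{\vee}=\sum_i e_i\cdot e_i^*$ for the Beauville--Bogomolov dual class (a rational Hodge class), a Fujiki computation gives $B_{q_F^{\vee}}=25\,b$, while a Riemann--Roch calculation based on $\chi(F,L)=\binom{b(L,L)/2+3}{2}$ and $\chi(F,\calO_F)=3$ yields $B_{c_2(F)}=30\,b$; hence $c_2(F)=\tfrac{6}{5}q_F^{\vee}$.

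Next, I constrain the candidate $\theta$. In the decomposition $\Sym^2\HH^2(F,\Q)=\Q\lambda_0^2\oplus\Q\lambda_0\cdot\lambda_0^\perp\oplus\Sym^2\lambda_0^\perp$, the middle summand contributes no Hodge classes (since $\Pic(F)=\Z\lambda_0$ implies $\lambda_0^\perp$ has no rational $(1,1)$-classes), while a direct Fujiki computation shows that the restriction map $\Sym^2\lambda_0^\perp\to\Sym^2(\lambda_0^\perp)^*$, $\nu\mapsto B_\nu|_{\lambda_0^\perp\times\lambda_0^\perp}$, is injective (it acts by $24$ on the trace part and by $2$ on the traceless part). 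Consequently, every rational Hodge class $\gamma\in\HH^4(F,\Q)$ with $B_\gamma|_{\lambda_0^\perp}=0$ is a rational multiple of $25\lambda_0^2-d\,q_F^\vee$, where $d=b(\lambda_0,\lambda_0)$. Since $q_F^\vee/25$ itself realises the minimality condition on $\lambda_0^\perp$, every minimal Hodge class $\theta$ lies in the rank-$2$ $\Q$-subspace $\langle\lambda_0^2,c_2(F)\rangle_\Q$, and writing $\theta=A\lambda_0^2+B\,c_2(F)$ the minimality condition becomes the single equation $Ad+30B=1$.

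The final and most delicate step is integrality: I must show that the affine line $\{Ad+30B=1\}\subset\Q^2$ misses the rank-$2$ lattice $L:=\HH^4(F,\Z)\cap\langle\lambda_0^2,c_2(F)\rangle_{\Q}$. The sublattice $\Z\lambda_0^2+\Z c_2(F)\subset L$ has Gram matrix of discriminant $3d^2\cdot 828-(30d)^2=1584\,d^2$ with respect to the intersection pairing, and since $\HH^4(F,\Z)$ is unimodular by Poincar\'e duality, the possible overlattices of $\Z\lambda_0^2+\Z c_2(F)$ inside $L$ are tightly constrained. The main obstacle is to pin down $L$ precisely using Markman's explicit description of the integral cohomology of $K3^{[2]}$-type manifolds, and to verify that Assumption \ref{technical assumption} rules out all fractional classes that could supply a solution: the oddness of $\lambda_0$ is expected to exclude the half-integer classes $\tfrac12(c_2(F)+k\lambda_0^2)$, and the congruence $(10+d)/8\in 2\Z$ in the even case excludes the remaining eighth-integer classes arising from the exceptional part of $\HH^2(F,\Z)$. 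Once these congruence restrictions produce the contradiction, the statement for very general $(F,\lambda_0)$ is immediate from the fact that such a manifold has $\Pic(F)=\Z\lambda_0$.
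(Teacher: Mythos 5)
Your reduction is sound and follows the same route as the paper: Verbitsky's isomorphism $\Sym^2\HH^2(F,\Q)\cong\HH^4(F,\Q)$, the observation that when $\Pic(F)=\Z\lambda_0$ the cross term $\lambda_0\cdot\lambda_0^\perp$ carries no Hodge classes, the injectivity of $w\mapsto B_w|_{\lambda_0^\perp}$ on $\Sym^2\lambda_0^\perp$ (eigenvalues $24$ and $2$), and the conclusion that any minimal Hodge class lies in $\Span_\Q\{\lambda_0^2,q\}$ with the minimality condition reading $Ad+30B=1$ in your normalization. This is the paper's Proposition \ref{minimal hodge class} specialized to Picard rank one, and your numerics ($B_q=25b$, $c_2(F)=\tfrac{6}{5}q$, the Gram matrix with determinant $1584d^2$) all check out.

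The genuine gap is the last step, which you explicitly defer (``is expected to exclude\dots'', ``once these congruence restrictions produce the contradiction''). Determining the lattice $L=\HH^4(F,\Z)\cap\Span_\Q\{\lambda_0^2,q\}$ is the hard core of the theorem, and it is \emph{not} accessible from the discriminant of $\Z\lambda_0^2+\Z c_2(F)$ plus unimodularity of $\HH^4(F,\Z)$: unimodularity of the ambient lattice does not pin down the saturation of a rank-two sublattice of a rank-$276$ lattice, so your ``tightly constrained overlattices'' argument does not close. What the paper actually does (Lemma \ref{lemma on integral classes}) is: (a) prove that $\tfrac{2}{5}q$ is integral and primitive and satisfies $\tfrac{2}{5}q+\delta^2=8v_0(\delta)$, which already requires the integral classes $v_0(\delta)$ and $v_\delta(\cdot)$ and the explicit integral basis of Theorem \ref{explicit basis II} (built from the Qin--Wang operator $L^{1,1}$); and (b) run a prime-by-prime analysis of when $\tfrac{1}{p}(a\lambda_0^2+b\cdot\tfrac{2}{5}q)$ can be integral, pairing against that basis, to conclude $V_{\lambda_0}=\Z\lambda_0^2\oplus\Z\tfrac{2}{5}q$ ($\lambda_0$ odd) or $\Z\lambda_0^2\oplus\Z\tfrac{1}{8}(\lambda_0^2+\tfrac{2}{5}q)$ ($\lambda_0$ even). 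Only then does the contradiction appear: the generators restrict to $\lambda_0^\perp$ as $d\cdot b$, $10\cdot b$, and $\tfrac{d+10}{8}\cdot b$, all even multiples of $b$ under Assumption \ref{technical assumption} (using that the Beauville--Bogomolov lattice is even, so $d\in 2\Z$), whence no integral combination can give index $1$. Without part (a) and (b) your proof is incomplete at precisely the point where the hypothesis on $\lambda_0$ must enter.
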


Our motivation for this study is the rationality problem of cubic
fourfolds. Works concerning this problem include \cite{hassett},
\cite{kulikov} and \cite{kuznetsov}. In their famous paper
\cite{cg}, Clemens and Griffiths proved that a smooth cubic
threefold is not rational by showing that its intermediate Jacobian
is not a Jacobian. The key point here is that if a 3 dimensional
algebraic variety is rational, then its intermediate Jacobian is the
Jacobian of a curve. As an analogue, we make the following

\begin{conj}\label{rationality conjecture}
Let $X\subset\PP^5_\C$ be a smooth cubic fourfold and $F(X)$ its
variety of lines. If $X$ is rational, then $F(X)$ is of Jacobian
type.
\end{conj}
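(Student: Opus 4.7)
The plan is to imitate the Clemens--Griffiths argument for cubic threefolds, where rationality forces the intermediate Jacobian to be a Jacobian of curves, adapted from weight-one to weight-two Hodge theory. The bridge is the Beauville--Donagi theorem: the incidence correspondence $\mathcal{L}\subset F(X)\times X$ induces, up to sign, a Hodge isometry
\[
[\mathcal{L}]_{*}:\HH^{4}(X,\Z)_{\mathrm{prim}}(1)\xrightarrow{\sim}\HH^{2}(F(X),\Z)_{\mathrm{prim}}
\]
intertwining the intersection form on $X$ with the Beauville--Bogomolov form on $F(X)$. Consequently, producing a minimal class on $F(X)$ is, up to an algebraic correction, equivalent to producing a cycle class on $X$ that realizes the identity on the transcendental part of $\HH^{4}(X,\Z)$.

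Given that $X$ is rational, Hironaka supplies a smooth variety $\tilde{X}$ with birational morphisms $\tilde{X}\to X$ and $\tilde{X}\to Y$, where $Y\to\PP^{4}$ is a tower of blow-ups with smooth centers $Z_{j}$. Applying the blow-up formula iteratively, and using that $\HH^{*}(\PP^{4})$ is purely algebraic, the transcendental sub-Hodge structure $\HH^{4}(X,\Z)_{\mathrm{tr}}$---of $K3$-type with Hodge numbers $(1,20,1)$---must be realized inside the cohomology of the $Z_{j}$ by algebraic correspondences. Weight and Hodge-type considerations pin this realization down to $\HH^{2}(S,\Z)(-1)$ for some surface $S$ among the $Z_{j}$, producing an algebraic correspondence $\Gamma_{S}\in\CH^{2}(S\times X)_{\Q}$ whose action $\Gamma_{S*}:\HH^{2}(S,\Q)\to\HH^{4}(X,\Q)_{\mathrm{tr}}$ is surjective and admits a section compatible with Poincar\'e duality on $S$.

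Set
\[
\theta:=\pi_{F*}\bigl([\mathcal{L}]\circ[\Gamma_{S}]\bigr)\in\CH^{2}(F(X))_{\Q}.
\]
A projection-formula chase combining $[\mathcal{L}]_{*}$ with the duality splitting of $\Gamma_{S}$ yields
\[
\int_{F(X)}\theta\cdot\alpha_{1}\cdot\alpha_{2}=b(\alpha_{1},\alpha_{2})
\]
for every $\alpha_{1},\alpha_{2}\in\HH^{2}(F(X),\Q)_{\mathrm{tr}}$, so $\theta$ is a rational minimal Hodge class on $F(X)$. To upgrade $\theta$ to an effective surface class $f_{*}[S']$, one resolves the multi-valued map $S\dashrightarrow F(X)$ sending $s\in S$ to the lines on $X$ incident to the support of the fibre $\Gamma_{S}|_{\{s\}\times X}$.

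The central obstruction is precisely this \emph{effectivity} step. Clearing denominators in $\theta$ is routine, and the Hodge-theoretic construction delivers a minimal Hodge class, but exhibiting that class as the pushforward of the fundamental class of an honest surface in $F(X)$ is an effectivity statement on $\CH^{2}(F(X))$ that does not follow formally from Hodge theory---in the same way that Matsusaka's criterion in the classical setting cannot be reversed without a positivity input. Absent new tools, the method yields only that $F(X)$ admits a minimal Hodge class, i.e.\ is potentially of Jacobian type, matching the partial result announced in the abstract under Assumption \ref{technical assumption}, while the full conjecture demands an additional geometric construction of $S'\subset F(X)$.
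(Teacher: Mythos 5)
First, a point of framing: the statement you set out to prove is Conjecture \ref{rationality conjecture}, which the paper itself does not prove. It offers only partial evidence --- Proposition \ref{prop jacobian implies special} and, most relevantly, Theorem \ref{thm on rationality imply potential}, established in Section 7 --- and your writeup, as you concede in your final paragraph, is likewise not a proof. Your opening moves do track the paper's: transport the problem to $X$ via the Beauville--Donagi isometry \cite{bd}, resolve $\PP^4\dasharrow X$ by blow-ups, and use the blow-up formulas (Proposition \ref{blow up formulas}, Corollary \ref{blow up formula for transcendental}) to realize $\HH^4(X,\Z)_{\mathrm{tr}}$ isometrically, up to sign, inside the $\HH^2$ of the surface centers; this is in substance Lemma \ref{lemma on rational fourfolds}. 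The divergence, and the problem, lies in what you claim this buys you afterwards.

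The gap you name --- effectivity --- is real, but it is not the main one, and your diagnosis that everything else is ``routine'' is refuted by the paper itself. A \emph{rational} minimal class always exists (e.g.\ $\frac{1}{25}q$, by Lemma \ref{lemma on integral classes}(ii)), and the entire content of Theorem \ref{main theorem on generic nonjacodbian}, via Proposition \ref{minimal hodge class} and Corollary \ref{cor index 2}, is that for very general $(F,\lambda_0)$ there is no \emph{integral} one; so ``clearing denominators in $\theta$'' cannot be routine. Achieving the pairing $b(\alpha,\beta)$ on the nose, rather than $e\,b(\alpha,\beta)$ for some integer $e\neq 1$, is the crux, and the paper's Theorem \ref{thm on rationality imply potential} obtains it by a parity argument you do not reproduce: the blow-up centers receive the cohomology of $X$ with index $\pm 1$ (Lemma \ref{lemma on rational fourfolds}); the residue-surface construction of \cite{relations} converts the resulting degree-$e$ families of rational curves into families of secant \emph{lines}, i.e.\ into correspondences with $F$, at the cost of an odd factor (equation \eqref{key equation 2}); and the resulting odd index is corrected to exactly $1$ by subtracting a multiple of the surface $S_l$ of lines meeting a fixed line, whose index is $2$ (Proposition \ref{proposition on odd index}). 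Your composition $[\mathcal{L}]\circ[\Gamma_S]$ skips the conversion from rational curves to lines entirely, and as written is dimensionally incoherent (composing a correspondence inducing $\HH^2(S)\to\HH^4(X)$ with the $5$-dimensional incidence variety in $X\times F$ does not yield a surface class on $F$). Two further conflations: ``admits a minimal Hodge class'' is not the same as ``potentially of Jacobian type'' --- the latter asks for $\Gamma\in\CH_2(S\times F)$ with $([\Gamma]^*\alpha\cdot[\Gamma]^*\beta)_S=b(\alpha,\beta)$, which for a correspondence that is not the graph of a morphism does not produce, via the projection formula, a class $\theta$ with $(\theta\cdot\alpha\cdot\beta)_F=b(\alpha,\beta)$; and the hypothesis under which the paper proves its partial result is the \emph{simple} successive blow-up condition of Definition \ref{defn of simple blow up}, not Assumption \ref{technical assumption}, which concerns the polarization and plays no role in Section 7.
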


We have two known classes of rational cubic fourfolds, namely the Pfaffian cubic fourfolds obtained by
Beauville--Donagi \cite{bd} and the rational cubic fourfolds containing a plane obtained by Hassett \cite{hassett2}.
In both cases, the above conjecture can be easily verified; see Remark \ref{rmk known cases}. In \cite{hassett},
B. Hassett defines a cubic fourfold to be \textit{special} if
it contains a surface which is not homologous to a complete
intersection.
\begin{prop}\label{prop jacobian implies special}
Let $X$ be a smooth cubic fourfold and $F=F(X)$ its variety of lines with $g_1$ being the Pl\"ucker polarization
on $F$. Then the polarization $\lambda_0=g_1$ satisfies Assumption \ref{technical assumption}. If $F$ is of
Jacobian type, then $X$ is special.
\end{prop}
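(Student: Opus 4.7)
The plan has two parts. First, I would verify that the Pl\"ucker polarization satisfies Assumption \ref{technical assumption}. It is classical (Beauville--Donagi) that $g_1\in\HH^2(F,\Z)$ is primitive with $b(g_1,g_1)=6$; the Fujiki relation for a $K3^{[2]}$-type fourfold, $\int_F g_1^4 = 3\,b(g_1,g_1)^2 = 108$, matches the known Pl\"ucker degree of $F(X)\subset\PP^{14}$. Substituting into the second clause of the assumption yields $\tfrac{10+6}{8}=2$, which is an even integer; so whether $g_1$ turns out to be odd or even in the sense of the preceding definition, Assumption \ref{technical assumption} holds.

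Next, suppose that $F$ is of Jacobian type; then by definition $F$ carries a minimal Hodge class. Combined with the first step, the pair $(F,g_1)$ satisfies the hypotheses of Theorem \ref{main theorem on generic nonjacodbian}, and hence $\rk\Pic(F)\geq 2$. In particular the orthogonal complement $g_1^{\perp}\cap\Pic(F)$ is nonzero: there is an integral $(1,1)$-class on $F$ that is not a multiple of the polarization.

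To turn this into specialness of $X$, I would appeal to the Beauville--Donagi Abel--Jacobi correspondence, which gives a Hodge isometry (up to the expected Tate twist and sign) between $\HH^4(X,\Z)_{\mathrm{prim}}$ and the primitive part $g_1^{\perp}\subset\HH^2(F,\Z)$. Under this identification, the extra Hodge class in $g_1^{\perp}$ produced above corresponds to a nonzero Hodge class in $\HH^{2,2}(X)\cap\HH^4(X,\Z)$ that is not a multiple of the square of the hyperplane class. Invoking the Hodge conjecture in degree $4$ for smooth cubic fourfolds (Zucker, Voisin) then yields an algebraic surface $T\subset X$ whose class is not homologous to a complete intersection, which is precisely Hassett's definition of a special cubic fourfold.

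The main difficulty is not in the present proposition at all, which is essentially a transcription once Theorem \ref{main theorem on generic nonjacodbian} is granted. The one point deserving care is ensuring that the extra Hodge class detected in $\Pic(F)$ genuinely produces an algebraic cycle on $X$, rather than merely a Hodge-theoretic labelling; this is exactly the step that requires the Hodge conjecture in codimension two for cubic fourfolds.
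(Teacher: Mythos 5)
Your proposal is correct and follows the same overall strategy as the paper: reduce to Theorem \ref{main theorem on generic nonjacodbian} (equivalently Corollary \ref{jacobian picard rank}) to get $\rk\Pic(F)\geq 2$, then transfer this to $\mathrm{Hdg}^4(X)$ via the Beauville--Donagi Abel--Jacobi isomorphism. The one place where you genuinely diverge is the verification of Assumption \ref{technical assumption}: you compute $b(g_1,g_1)=6$ directly from the Fujiki relation and the Pl\"ucker degree $g_1^4=108$, and then observe that $\tfrac{10+6}{8}=2$ is even, so the assumption holds irrespective of whether $g_1$ is odd or even --- a clean trick that avoids determining the parity at all. The paper instead uses that the relevant invariants are deformation invariant, specializes to a general Pfaffian cubic where $F\cong S^{[2]}$ for a degree-$14$ $K3$ surface $S$, and reads off $g_1=2\fb-5\delta$ from \cite{bd}, which in particular shows $g_1$ is even; your route is shorter, the paper's gives more information. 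On the last step, the paper simply recalls the rank characterization ($X$ is special iff $\mathrm{Hdg}^4(X)$ has rank at least two, equivalent via $\Phi$ to $\rk\Pic(F)\geq 2$) and stops there, whereas you additionally invoke the Hodge conjecture in degree $4$ for cubic fourfolds to produce an actual surface not homologous to a complete intersection; that extra step is correct and is exactly what reconciles the Hodge-theoretic characterization with Hassett's geometric definition quoted in the introduction, so nothing is missing from your argument.
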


Then the following proposition follows immediately.
\begin{prop}\label{conjecture implies generic nonrational}
If $X$ is a very general cubic fourfold, then $F(X)$ is not of
Jacobian type. Conjecture \ref{rationality conjecture} implies that a rational cubic
fourfold is special.
\end{prop}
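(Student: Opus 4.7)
The plan is to derive both assertions as immediate consequences of Theorem \ref{main theorem on generic nonjacodbian} and Proposition \ref{prop jacobian implies special}, with essentially no new computation. First I would recall that, by Beauville--Donagi, for every smooth cubic fourfold $X$ the variety of lines $F=F(X)$ is a hyperk\"ahler fourfold of $K3^{[2]}$-type; by Proposition \ref{prop jacobian implies special} the Pl\"ucker polarization $g_1$ is primitive, has $b(g_1,g_1)=6$, and satisfies Assumption \ref{technical assumption}. Hence $(F(X),g_1)$ lies in exactly the class of primitively polarized hyperk\"ahler manifolds to which Theorem \ref{main theorem on generic nonjacodbian} applies.

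For the first assertion, the only point that requires care is the translation from ``very general cubic fourfold" to ``very general $(F(X),g_1)$". The moduli of smooth cubic fourfolds is $20$-dimensional, the moduli of primitively polarized hyperk\"ahler fourfolds of $K3^{[2]}$-type with $b(\lambda_0,\lambda_0)=6$ (and $\lambda_0$ in the appropriate $O(\HH^2)$-orbit) is also $20$-dimensional, and the assignment $X\mapsto(F(X),g_1)$ is known, via Beauville--Donagi together with Voisin's global Torelli theorem for cubic fourfolds, to be an open immersion whose image is Zariski dense. Consequently a very general $X$ produces a very general pair, and Theorem \ref{main theorem on generic nonjacodbian} then forces the nonexistence of a minimal Hodge class on $F(X)$, hence the failure of the Jacobian property.

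The second assertion is a purely formal deduction. Assuming that $X$ is rational, Conjecture \ref{rationality conjecture} asserts that $F(X)$ is of Jacobian type; applying Proposition \ref{prop jacobian implies special} immediately yields that $X$ is special.

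The main obstacle, such as it is, lies in the moduli-level comparison in the second paragraph: one must know that the image of the period map of cubic fourfolds meets every Zariski-open subset of the relevant component of the polarized hyperk\"ahler moduli. This is standard after Voisin, and once it is in place both statements of the proposition follow mechanically from the two earlier results.
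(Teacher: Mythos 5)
Your proposal is correct, and the second assertion is handled exactly as the paper intends: it is the formal composition of Conjecture \ref{rationality conjecture} with Proposition \ref{prop jacobian implies special}. For the first assertion, however, you take a genuinely different (and heavier) route than the paper. The paper's argument is simply the contrapositive of Proposition \ref{prop jacobian implies special}: a very general cubic fourfold is not special (the standard Noether--Lefschetz fact that $\mathrm{Hdg}^4(X)$ has rank one outside a countable union of proper closed subsets of the moduli of cubics, equivalently $\rk\Pic(F)=1$ via the Beauville--Donagi isomorphism $\Phi$), so $F(X)$ cannot be of Jacobian type. You instead transport ``very general'' across the assignment $X\mapsto(F(X),g_1)$ into the $20$-dimensional moduli of polarized hyperk\"ahler fourfolds and then invoke Theorem \ref{main theorem on generic nonjacodbian} there; this works, but it requires knowing that the period/moduli map for cubic fourfolds is an open immersion (or at least dominant onto an open subset, so that proper closed subsets pull back to proper closed subsets), i.e.\ Voisin's Torelli theorem or at minimum local Torelli for cubics. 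What your approach buys is independence from the notion of ``special'' and from the Abel--Jacobi identification of $\mathrm{Hdg}^4(X)$ with $\Pic(F)$; what the paper's approach buys is economy, since Proposition \ref{prop jacobian implies special} has already packaged the Picard-rank jump (Corollary \ref{jacobian picard rank}) into the statement ``$X$ is special,'' and the only extra input needed is the classical fact that a very general cubic fourfold is not special. Both arguments are valid; just be aware that your moduli-level comparison is doing work that the cited propositions have already done for you.
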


To give further evidence of the Conjecture \ref{rationality conjecture}, we need the
following definition.
\begin{defn}
Let $F$ be a hyperk\"ahler manifold. We say that $F$ is
\textit{potentially of Jacobian type} if there exists a smooth
surface $S$ (not necessarily irreducible) and a correspondence
$\Gamma\in\CH_2(S\times F)$, such that
$$
([\Gamma]^*\alpha\cdot [\Gamma]^*\beta)_S =
b(\alpha,\beta),\quad\forall\alpha,\beta\in\HH^2(F,\Z)_{\mathrm{tr}}.
$$
\end{defn}
\begin{thm}\label{thm on rationality imply potential}
Let $X$ be a smooth cubic fourfold and $F$ its variety of lines. If
there is a birational map $f:\PP^4\dasharrow X$ whose indeterminacy
can be resolved by a simple successive blow-up, then $F$ is
potentially of Jacobian type.
\end{thm}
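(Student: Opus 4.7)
Let $\pi\colon Y\to\PP^4$ resolve $f$, so $\tilde f := f\circ\pi\colon Y\to X$ is birational; denote the smooth blow-up centres by $Z_1,\ldots,Z_N$ and let $S_1,\ldots,S_m$ be those of dimension two. Put $S=S_1\sqcup\cdots\sqcup S_m$; for each $i$ let $j_i\colon E_i\hookrightarrow Y$ be the (strict transform of the) exceptional divisor over $S_i$, with $\sigma_i\colon E_i\to S_i$ the $\PP^1$-bundle projection. The plan is to produce $\Gamma\in\CH_2(S\times F)$ whose action on transcendental cohomology factorises as
\[
\HH^2(F,\Q)_{\mathrm{tr}}\xrightarrow{\Phi^{-1}}\HH^4(X,\Q)_{\mathrm{tr}}\xrightarrow{\tilde f^\ast}\HH^4(Y,\Q)_{\mathrm{tr}}\xrightarrow{\mathrm{pr}}\HH^2(S,\Q)_{\mathrm{tr}},
\]
where $\Phi = q_\ast p^\ast$ is the Beauville--Donagi Abel--Jacobi map from the universal line correspondence $P\subset F\times X$, and $\mathrm{pr}$ is the projection coming from the blow-up decomposition of $Y$.

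The iterated blow-up formula expresses $\HH^4(Y,\Q)$ as $\HH^4(\PP^4)$ plus, for each centre $Z_j$, Tate twists of $\HH^\ast(Z_j)$; only $\HH^2$ of the two-dimensional centres can contribute transcendentally (curve and point centres contribute algebraically through $\HH^0$ and, for curves, the fundamental class in $\HH^2$). Hence $\HH^4(Y,\Q)_{\mathrm{tr}}\cong\bigoplus_i \HH^2(S_i,\Q)_{\mathrm{tr}}=\HH^2(S,\Q)_{\mathrm{tr}}$, with both directions realised (up to a sign coming from $\sigma_{i\ast}j_i^\ast j_{i\ast}\sigma_i^\ast=-\mathrm{id}$) by the correspondence $\Psi:=\sum_i(\sigma_i,j_i)_\ast[E_i]\in\CH_3(S\times Y)$. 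Since $\tilde f$ is birational, the projection formula gives $\tilde f_\ast\tilde f^\ast=\mathrm{id}$, so $\tilde f^\ast$ is an injection that preserves cup product and the Hodge decomposition. The composition $\psi:=\Psi^\ast\circ\tilde f^\ast\colon \HH^4(X,\Q)_{\mathrm{tr}}\hookrightarrow \HH^2(S,\Q)_{\mathrm{tr}}$ is therefore an injective isometry, induced by an explicit algebraic cycle in $\CH(S\times X)_\Q$.

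By Beauville--Donagi, $\Phi$ is a Hodge isomorphism $\HH^4(X,\Z)_{\mathrm{prim}}\to \HH^2(F,\Z)_{\mathrm{prim}}$ intertwining the intersection form on $X$ with $\pm b$ on $F$, so it restricts to a Hodge isometry of transcendental parts up to a fixed sign. To realise $\Phi^{-1}$ algebraically, let $g_1\in\Pic(F)$ be the Pl\"ucker polarisation and take $\lambda=g_1^2\in\CH^2(F)$; by hard Lefschetz, cup with $\lambda$ induces an isomorphism $\HH^2(F,\Q)_{\mathrm{prim}}\xrightarrow{\sim} \HH^6(F,\Q)_{\mathrm{prim}}$. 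The codimension-$5$ cycle $\Gamma_0:=[P]^t\cdot\mathrm{pr}_F^\ast\lambda\in\CH^5(X\times F)$ then acts on $\alpha\in \HH^2(F)$ by $\alpha\mapsto p_\ast q^\ast(\lambda\alpha)\in \HH^4(X)$, and a Fujiki-type computation, together with the irreducibility of $\HH^4(X,\Q)_{\mathrm{prim}}$ as a Hodge structure, identifies this action on transcendental classes as a nonzero rational multiple $c\cdot\Phi^{-1}$. Composing $\Gamma_0$ with the graph of $\tilde f$ and with $\Psi$, then rescaling by $c^{-1}$, yields $\Gamma\in\CH_2(S\times F)_\Q$ (codimension count: $5+4+3-\dim X-\dim Y=4$) with the correct cohomological action; clearing denominators by replacing $S$ with several disjoint copies (on which the intersection pairing on $\HH^2$ scales with the number of copies) produces an integral correspondence satisfying the defining identity.

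The principal obstacle is this last normalisation: the scalar $c$ and the sign in the Beauville--Donagi isometry must be tracked with care so that $([\Gamma]^\ast\alpha\cdot[\Gamma]^\ast\beta)_S = b(\alpha,\beta)$ holds on the nose rather than merely up to a proportionality constant. The rest of the argument is structural: transcendental $\HH^2(F)$ is brought to $\HH^4(X)$ by Beauville--Donagi, to $\HH^4(Y)$ by birational invariance, and finally to $\HH^2(S)$ because the only source of transcendental classes in $\HH^4$ of a successive blow-up of $\PP^4$ is the surface blow-up centres.
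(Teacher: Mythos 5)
Your first half is sound and is essentially the paper's Lemma \ref{lemma on rational fourfolds}: the blow-up decomposition identifies $\HH^4(Y,\Q)_{\mathrm{tr}}$ with $\HH^2(S,\Q)_{\mathrm{tr}}$, only the two-dimensional centres contribute, and $\tilde f^*$ followed by this projection is realized (up to sign) by the family of exceptional fibres, giving an isometry of transcendental lattices induced by an integral correspondence. The fatal gap is in the second half, and it is exactly the obstacle you flag but do not overcome. The correspondence $[P]^t\cdot\mathrm{pr}_F^*\lambda$ with $\lambda=g_1^2$ acts on $\HH^2(F,\Z)_{\mathrm{tr}}$ as $\pm 6\,\Phi^{-1}$, because $(g_1^2\cdot\fa\cdot\fb)_F=6\,b(\fa,\fb)$ for transcendental $\fa,\fb$ (see the computation in the proof of Theorem \ref{integral hodge cubic fourfolds}); hence your composite $\Gamma$ satisfies $([\Gamma]^*u\cdot[\Gamma]^*v)_S=36\,b(u,v)$. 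The definition of ``potentially of Jacobian type'' requires an \emph{integral} $\Gamma\in\CH_2(S\times F)$ with constant exactly $1$, so rescaling by $c^{-1}$ is not available, and passing to disjoint copies of $S$ only replaces $36$ by $36\sum a_i^2$ (or, adjoining the surface $S_l$ of lines meeting a line, by $36\sum a_i^2+2\sum b_j^2$), which is always even: you can never reach $1$. Worse, Corollary \ref{cor index 2} says that \emph{every} integral Hodge class $\lambda\in V_{g_1}$ satisfies $(\lambda\cdot\fa\cdot\fb)_F=e\,b(\fa,\fb)$ with $e$ even, so no choice of $\lambda$ built from $g_1$ and $q$ can make your multiplier odd; assuming some other odd Hodge class exists on $F$ would be assuming close to what the theorem is meant to establish (compare Theorem \ref{main theorem on generic nonjacodbian}).

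The paper supplies the missing odd-index mechanism from a genuinely different source: the secant-line (residue surface) construction of \cite{relations}. From the degree-$e$ family of rational curves produced in the first half (arranged to be well-positioned by choosing the projection point generally), the relation \eqref{key relation} in the Picard group of the residue surface yields an integral correspondence $\Gamma_S$, parametrizing secant lines and hence mapping to $F$, with $[\Gamma_S]^*\circ\alpha=(3-2e)[\mathscr{C}]^*$. The factor $3-2e$ is odd, so the resulting index is odd, and any odd index can then be corrected to exactly $1$ by adding copies of $S_l$ (index $2$) with suitable integer coefficients (Proposition \ref{proposition on odd index}). This parity argument, grounded in a cycle-theoretic relation on the residue surface rather than in Hodge theory, is the essential content of the proof and is absent from your proposal.
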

For the definition of simple successive blow-up, see Definition
\ref{defn of simple blow up}.

The plan of this paper is as follows. For a hyperk\"ahler manifold
$F$ of $K3^{[2]}$-type, our investigation relies on an explicit
basis for $\HH^4(F,\Z)$ obtained in Theorem \ref{explicit
basis II}. The following
canonical torsion group
$$
\mathcal{T}^4(F)=\HH^4(F,\Z)/\Sym^2(\Lambda),\quad\Lambda=\HH^2(F,\Z)
$$
comes into the picture in a subtle way. We give a description of
this group in Theorem \ref{thm on T^4}. In section 4, we first give an explicit description of a certain subgroup $V_{\lambda_0}\subset \mathrm{Hdg}^4(F)$ of integral Hodge classes; see Definition \ref{defn V lambda} for the notation. This is the technical Lemma \ref{lemma on integral classes} which becomes quite useful later on. We show that a minimal Hodge class in $\HH^4(F,\Z)$ is always contained in the linear span of intersections of divisors and the Beauville--Bogomolov form. In particular, if $F$ has Picard rank one then a minimal Hodge class is always in $V_{\lambda_0}$. From this combined with the technical lemma mentioned above, we deduce Theorem \ref{main theorem on generic nonjacodbian}. In section 5, we carry out an infinitesimal deformation calculation and show that on a very general $F$ the Hodge classes in degree $4$ are generated by the polarization and the Beauville--Bogomolov form, i.e. $\mathrm{Hdg}^4(F)=V_{\lambda_0}$; see Theorem \ref{thm on hodge classes}. This yet gives a second proof of the fact that a very general $F$ is not of Jacobian type. One consequence of this calculation is a proof of the Hodge conjecture for very general $F$. If $F$ is the variety of lines on a very general cubic fourfold, we are able to prove the integral Hodge conjecture thanks to the rich geometry that we have in this case; see Theorem \ref{integral hodge cubic fourfolds}. Section 7 gives a proof of Theorem \ref{thm on rationality imply potential}. This section is independent of the previous ones.

Before ending this introduction, we would like to make a few remarks
on some open questions. We see that $S^{[n]}$ is of Jacobian type,
but we do not know whether the converse holds true or not. Namely,
if we know that a hyperk\"ahler manifold $F$ (deformation equivalent
to $S^{[n]}$) is of Jacobian type, can we conclude that $F$ is
birational to $S^{[n]}$, for some $K3$ surface $S$? If $F=F(X)$ is
the variety of lines on a cubic fourfold, then there is always the
surface $S_l$ of lines meeting a given general line $l$ and the natural
inclusion
$$
f:S_l\to F
$$
such that
$$
f^*\alpha\cdot
f^*\beta=2b(\alpha,\beta),\quad\forall\alpha,\beta\in\HH^2(F,\Z)_{\mathrm{tr}}.
$$
In analogy with the theory of abelian varieties, this is saying that
$F(X)$ is a ``Prym variety". In fact we do have a Prym construction
as follows. There is an involution $\sigma$ on $S_l$ and the image
of the restriction
$\HH^2(F,\Z)_{\mathrm{tr}}\to\HH^2(S_l,\Z)_{\mathrm{tr}}$ is the
part on which $\sigma=-1$, see \cite{izadi}. If we replace $l$ by a
rational curve of higher degree, then we get a Prym-Tjurin
construction, see \cite{pt}. We would like to ask the following
questions. For a hyperk\"ahler manifold $F$, what is the smallest
integer $e>0$ such that there is a surface $f:S\to F$ satisfying
$$
f^*\alpha\cdot
f^*\beta=eb(\alpha,\beta),\quad\forall\alpha,\beta\in\HH^2(F,\Z)_{\mathrm{tr}}?
$$
If $S$ is a surface satisfying the above condition, is there a
Prym-Tjurin construction on $S$ to give the restriction of
$\HH^2(F,\Z)_{\mathrm{tr}}$?

\textit{Acknowledgement.} The author would like to thank Claire
Voisin for many helpful email correspondences. He also thanks D.
Huybrechts, B. Totaro and E. Markman for their comments on an
earlier version of this paper and C. Vial for many helpful discussions.

\section{The degree 4 integral cohomology group}
In this section, we fix $F$ to be a hyperk\"ahler manifold which is
deformation equivalent to the Hilbert scheme of length 2 subschemes
of a $K3$ surface. Such an $F$ will be called a hyperk\"ahler manifold
of $K3^{[2]}$-type. We will give an explicit description of
$\HH^4(F,\Z)$. Some general results about the integral cohomology
ring were obtained in \cite{qin-wang}, \cite{li-qin} and
\cite{markman}.

Let $(\Lambda,b)$ be the lattice $\HH^2(F,\Z)$ equipped with the
Beauville--Bogomolov bilinear form. We define the second symmetric
power of $\Lambda$ to be
$$
\Sym^2(\Lambda)=\Lambda\otimes\Lambda/\langle a\otimes b-b\otimes
a\rangle.
$$
We will simply use $\alpha\beta\in\Sym^2(\Lambda)$ to denote the
image of $\alpha\otimes\beta\in \Lambda\otimes\Lambda$. The cup
product
$$
\cup:\HH^2(F,\Z)\otimes\HH^2(F,\Z)\to\HH^4(F,\Z)
$$
naturally factors through $\Sym^2(\Lambda)$ and induces a
homomorphism
$$
\rho:
\Sym^2(\Lambda)\to\HH^4(F,\Z),\quad\alpha\beta\mapsto\alpha\cup\beta.
$$

\begin{lem}\label{lem sym2}
(i) $\rho\otimes\Q:\Sym^2(\HH^2(F,\Q))\to\HH^4(F,\Q)$ is an isomorphism.\\
(ii) $\rho$ is injective and the image has finite index in $\HH^4(F,\Z)$.\\
(iii) The intersection form on $\HH^4(F,\Z)$ restricted to
$\Sym^2\Lambda$ is given by
$$
\rho(\alpha_1\alpha_2)\cdot\rho(\alpha_3\alpha_4)=b(\alpha_1,\alpha_2)b(\alpha_3,
\alpha_4)+b(\alpha_1,\alpha_3)b(\alpha_2,\alpha_4)+b(\alpha_1,\alpha_4)b(\alpha_2,\alpha_3)
$$
for all $\alpha_1,\ldots,\alpha_4\in\Lambda$.
\end{lem}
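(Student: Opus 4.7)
The plan is to prove (i), (ii), (iii) in sequence, using respectively Verbitsky's injectivity theorem, the torsion-freeness of the integral cohomology of a $K3^{[2]}$-type manifold, and the Beauville--Fujiki relation. For (i), I would invoke Verbitsky's theorem: for an irreducible hyperk\"ahler manifold of dimension $2n$ the map $\Sym^k\HH^2(F,\Q)\to\HH^{2k}(F,\Q)$ is injective whenever $k\le n$; applied with $k=n=2$ this gives injectivity of $\rho\otimes\Q$. Since $F$ is of $K3^{[2]}$-type, deformation invariance of Betti numbers gives $b_4(F)=276=\binom{24}{2}=\dim_{\Q}\Sym^2\HH^2(F,\Q)$, so an injection between $\Q$-vector spaces of equal dimension is automatically an isomorphism.

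For (ii) the only additional input needed is that $\HH^4(F,\Z)$ is torsion-free. On the model $S^{[2]}$ this is classical (for instance from the explicit description of the integral cohomology of Hilbert schemes of $K3$ surfaces), and by Ehresmann's theorem applied to a smooth family deforming $F$ to $S^{[2]}$ the integral cohomology groups are locally constant, so the same conclusion holds for any $F$ of $K3^{[2]}$-type. Torsion-freeness of the target, combined with injectivity of $\rho\otimes\Q$ established in (i), immediately gives injectivity of $\rho$ on the nose, and the finite-index statement is then automatic because (i) says that the image already has full $\Q$-rank.

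For (iii) the starting point is the Beauville--Fujiki relation, which in the $K3^{[2]}$-case takes the form
$$
\int_F \alpha^4 \;=\; 3\,b(\alpha,\alpha)^2\qquad\forall\,\alpha\in\HH^2(F,\Q),
$$
the constant $3=4!/(2!\cdot 2^2)$ being computed once on $S^{[2]}$ and propagated by deformation. I would then polarize this quartic identity by substituting $\alpha = t_1\alpha_1+\cdots+t_4\alpha_4$ and extracting the coefficient of $t_1t_2t_3t_4$ on both sides. The left-hand side contributes $4!\int_F\alpha_1\alpha_2\alpha_3\alpha_4$, while the right-hand side contributes $3\cdot 8$ times the sum, over the three pairings of $\{1,2,3,4\}$ into two pairs, of the corresponding product $b(\alpha_i,\alpha_j)b(\alpha_k,\alpha_l)$; the common factor $24$ then cancels, yielding exactly the formula stated in (iii). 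The main (mildly) non-formal step in the whole lemma is verifying the Fujiki normalization $c_F=3$, but this is a single concrete computation on $S^{[2]}$; the rest is bookkeeping.
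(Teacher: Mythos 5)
Your proof is correct and follows essentially the same route as the paper's: Verbitsky's theorem plus the Betti number $b_4(F)=276=\dim\Sym^2\HH^2(F,\Q)$ for (i), deducing (ii) from (i), and the Fujiki relation (which you polarize explicitly) for (iii). The only harmless redundancy is your appeal to torsion-freeness of $\HH^4(F,\Z)$ in (ii): since $\Sym^2\Lambda$ is itself a free abelian group, injectivity of $\rho\otimes\Q$ already forces injectivity of $\rho$, with no hypothesis on the target needed.
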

\begin{proof}
By a result of Verbitsky \cite{verbitsky, bogomolov}, we know that the subalgebra of $\HH^*(F,\Q)$ generated by $\HH^2(F,\Q)$ is isomorphic to its symmetric algebra modulo an ideal supported at degrees of at least $6$. Hence $\Sym^2\HH^2(F,\Q)\subset \HH^4(F,\Q)$. Namely $\rho$ is injective. At the same time, we know that $b_4(F)=276=\dim\Sym^2H^2(F)$; see \cite{bd}. This implies that $\rho\otimes\Q$ is surjective and hence (i) follows. Statement (ii) is a direct consequence of (i) and (iii) is simply the Fujiki relation; see \cite{beauville}.
\end{proof}

\begin{defn}
We define $\mathcal{T}^4(F)$ to be the quotient of $\HH^4(F,\Z)$ by
$\rho(\Sym^2\Lambda)$.
\end{defn}

Note that the group $\mathcal{T}^4(F)$ is always a torsion group of finite order.

\subsection{The special case of $F=S^{[2]}$}
In this subsection, we carry out some explicit
computations in the situation $F=S^{[2]}$ for some $K3$ surface $S$.
Let $\tau: Z\rightarrow S\times S$ be the blow up of $S\times S$
along the diagonal $\Delta_S:S\to S\times S$. Let
$j:\tilde{\Delta}\hookrightarrow Z$ be the exceptional divisor of
the blow-up. There is a natural morphism $\eta:\tilde{\Delta}\to S$
that realizes $\tilde{\Delta}$ as a $\PP^1$-bundle over $S$.
Actually, $\tilde{\Delta}=\PP(T_S)$ is the projectivization of the
tangent bundle of $S$. There is a natural degree 2 finite morphism
$\pi: Z\to S^{[2]}$ that ramifies along the divisor
$\tilde{\Delta}$. Let $G=\{1,\sigma\}\cong\Z/2\Z$ act on $S\times S$
by switching the two factors. Then this action lifts to an action of
$G$ on $Z$ and $S^{[2]}$ is the associated quotient. Let
$\Delta=\pi(\tilde{\Delta})\subset S^{[2]}$. By construction,
$\pi|_{\tilde{\Delta}}:\tilde{\Delta}\to\Delta$ is an isomorphism.
We use $j'$ to denote the inclusion of $\Delta$ into $S^{[2]}$. In
summary, we have the following picture,
$$
\xymatrix{
 S &\tilde{\Delta}\ar[r]^j\ar[d]_{\cong}\ar[l]_{\eta} & Z\ar[r]^{\tau}\ar[d]^{\pi} &S\times S\\
  &\Delta\ar[r]^{j'} &S^{[2]} &
}
$$
The blow up $\tau$ gives a short exact sequence
$$
\xymatrix{
 0\ar[r] &\HH^2(S\times S,\Z)\ar[rr]^{\tau^*}
 &&\HH^2(Z,\Z)\ar[rr]^{\quad\cdot(-f)} &&\Z\ar[r] &0.
}
$$
The element $f\in \HH^6(Z,\Z)$ is the class of a fiber of
$\eta:\tilde\Delta\to S$. This sequence naturally splits since we
have a splitting homomorphism $\Z\to \HH^2(Z,\Z)$, $1\mapsto
[\tilde\Delta]$. Hence we have a canonical isomorphism
$$
\HH^2(Z,\Z)\cong\HH^2(S\times S,\Z)\oplus \Z[\tilde\Delta].
$$
This is compatible with the $G$-action ($G$ acts trivially on
$[\tilde\Delta]$). Note that by K\"unneth formula we have
$$
\HH^2(S\times S,\Z)= p_1^*\HH^2(S,\Z)\oplus
p_2^*\HH^2(S,\Z)\cong\HH^2(S,\Z)\otimes\Z[G].
$$
From now on, we will fix a $\Z$-basis
$$
\{\fa_1,\fa_2,\ldots,\fa_{22}\}
$$
for $\HH^2(S,\Z)$. The following lemma follows from the above
computations
\begin{lem}\label{second cohomology of Z}
(i)
$\HH^2(Z,\Z)^G=\oplus_{i=1}^{22}\Z\,\tau^*(p_1^*\fa_i+p_2^*\fa_i)\oplus\Z[\tilde\Delta]$.\\
(ii) For all odd $i$, we have $\HH^i(G,\HH^2(Z,\Z))=0$.\\
(iii) For all even positive $i$, we have
$\HH^i(G,\HH^2(Z,\Z))=\Z/2\Z$.
\end{lem}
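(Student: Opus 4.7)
The plan is to exploit the canonical $G$-equivariant splitting
$\HH^2(Z,\Z)\cong \HH^2(S\times S,\Z)\oplus\Z[\tilde\Delta]$
already established in the excerpt, and reduce everything to standard computations with the regular representation of $G=\Z/2\Z$. Since $\tilde\Delta\subset Z$ is the exceptional divisor of the blow-up along the (symmetric) diagonal, the class $[\tilde\Delta]$ is manifestly $G$-fixed, so the direct summand $\Z[\tilde\Delta]$ is the trivial $G$-module $\Z$. Via the K\"unneth decomposition $\HH^2(S\times S,\Z)=p_1^*\HH^2(S,\Z)\oplus p_2^*\HH^2(S,\Z)$, and after choosing the basis $\{\fa_1,\dots,\fa_{22}\}$, this summand is $G$-equivariantly isomorphic to $\HH^2(S,\Z)\otimes_\Z\Z[G]$, where $G$ acts only on the $\Z[G]$-factor by left multiplication.

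For (i) I would compute $G$-invariants on each summand separately. The fixed part of the trivial summand $\Z[\tilde\Delta]$ is itself; for the K\"unneth piece, $(\Z[G])^G=\Z\cdot(1+\sigma)$, so $\bigl(\HH^2(S,\Z)\otimes\Z[G]\bigr)^G=\HH^2(S,\Z)\otimes\Z(1+\sigma)$, which under $\tau^*$ maps onto $\bigoplus_i\Z\,\tau^*(p_1^*\fa_i+p_2^*\fa_i)$. Adding the two contributions gives the stated formula.

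For (ii) and (iii) I would again split and apply Shapiro's lemma: the module $\HH^2(S,\Z)\otimes\Z[G]$ is coinduced (equivalently, induced, since $G$ is finite) from the trivial subgroup of $G$, hence has vanishing higher group cohomology,
\[
\HH^i\bigl(G,\HH^2(S,\Z)\otimes\Z[G]\bigr)=0\quad\text{for all }i>0.
\]
Therefore $\HH^i(G,\HH^2(Z,\Z))=\HH^i(G,\Z)$ for all $i>0$, and the standard computation for $G=\Z/2\Z$ gives $0$ in odd degrees and $\Z/2\Z$ in positive even degrees, proving (ii) and (iii).

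The only subtle point to check carefully is that the chosen splitting $\HH^2(Z,\Z)\cong\HH^2(S\times S,\Z)\oplus\Z[\tilde\Delta]$ really is $G$-equivariant; this is where a sloppy choice of splitting could fail. However, the splitting used above is canonical: the projection to $\Z$ is given by intersection with $-f$ (a class that is $G$-invariant, since $f$ is the class of a fiber of the $G$-equivariant $\PP^1$-bundle $\eta\colon\tilde\Delta\to S$), and the section $1\mapsto[\tilde\Delta]$ is $G$-invariant by construction. Beyond this compatibility, the remaining steps are routine homological algebra, so I would expect no serious obstacle.
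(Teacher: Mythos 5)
Your proof is correct and follows the same route the paper takes: the paper derives this lemma directly from the $G$-equivariant splitting $\HH^2(Z,\Z)\cong\HH^2(S\times S,\Z)\oplus\Z[\tilde\Delta]$ and the K\"unneth identification $\HH^2(S\times S,\Z)\cong\HH^2(S,\Z)\otimes\Z[G]$, exactly as you do. Your explicit invocation of Shapiro's lemma for the induced summand and the standard computation of $\HH^i(\Z/2\Z,\Z)$ just spells out what the paper leaves as ``follows from the above computations.''
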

We have similar descriptions of $\HH^4(Z,\Z)$. First we have the
following exact sequence (derived from the Leray spectral sequence
associated to $\tau:Z\to S\times S$),
\begin{equation}\label{weight 4 cohomology}
\xymatrix{
 0\ar[r] &\HH^4(S\times S,\Z)\ar[r] &\HH^4(Z,\Z)\ar[r]^{\phi\quad}
 &\HH^2(S,R^2\eta_*\Z)\ar[r] &0.
}
\end{equation}
Let $\xi=c_1(\calO_{\tilde\Delta}(1))\in\HH^2(\tilde\Delta,\Z)$,
where $\calO_{\tilde\Delta}(1)$ is the relative $\calO(1)$ bundle of
$\tilde\Delta=\PP(T_S)\to S$. Then $\xi$ defines an isomorphism
$\Z\cong R^2\eta_*\Z$. This induces
$$
\HH^2(S,\Z)\cong\HH^2(S,R^2\eta_*\Z).
$$
Let $G$ act trivially on $\HH^2(S,R^2\eta_*\Z)$, then the sequence
\eqref{weight 4 cohomology} respects the $G$-actions.
\begin{lem}\label{splitting lemma}
(i) The homomorphism $\HH^2(S,\Z)\to\HH^4(Z,\Z)$, $\fa_i\mapsto
-j_*\eta^*\fa_i$, splits the sequence \eqref{weight 4 cohomology}
canonically.\\
(ii) As $G$-modules, we have
$$
\HH^4(Z,\Z)=\HH^4(S\times S,\Z)\oplus \HH^2(S,\Z)
$$
where $G$ acts trivially on the factor $\HH^2(S,\Z)$.
\end{lem}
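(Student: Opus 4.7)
The plan is to deduce both parts from a single computation of $j^*j_*(\eta^*\fa)$ via the self-intersection formula, together with an analysis of the induced $G$-action on the exceptional divisor.

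For (i), I first make $\phi$ in \eqref{weight 4 cohomology} concrete. Since $R^q\tau_*\Z$ is supported on $\Delta_S$ and restricts there to $R^q\eta_*\Z$, and since $\xi$ trivializes $R^2\eta_*\Z\cong \Z$, the $\PP^1$-bundle structure on $\tilde\Delta$ yields the canonical decomposition
\[
\HH^4(\tilde\Delta,\Z)=\eta^*\HH^4(S,\Z)\oplus \xi\cdot\eta^*\HH^2(S,\Z),
\]
and for $\alpha\in\HH^4(Z,\Z)$, writing $j^*\alpha=\eta^*\beta_0+\xi\cdot\eta^*\beta_1$, we have $\phi(\alpha)=\beta_1$. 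Using $N_{\tilde\Delta/Z}\cong\calO_{\tilde\Delta}(-1)$, the self-intersection formula gives
\[
j^*j_*(\eta^*\fa)=c_1(N_{\tilde\Delta/Z})\cdot\eta^*\fa=-\xi\cdot\eta^*\fa,
\]
so $\phi(-j_*\eta^*\fa)=\fa$. This proves that $\fa\mapsto -j_*\eta^*\fa$ is a canonical splitting of the exact sequence \eqref{weight 4 cohomology}.

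For (ii), I need to verify that the splitting in (i) is $G$-equivariant when $\HH^2(S,\Z)$ carries the trivial action, which reduces to computing $\sigma|_{\tilde\Delta}$. The differential of $\sigma$ at a point $p\in\Delta_S$ acts on $T_p(S\times S)=T_pS\oplus T_pS$ by swap; it fixes $T_p\Delta=\{(v,v)\}$, and since $(v,0)\equiv -(0,v)\pmod{T_p\Delta}$, it acts as $-1$ on $N_{\Delta_S/S\times S}\cong T_S$. Because $-1$ induces the identity on the projectivization, $\sigma|_{\tilde\Delta}=\mathrm{id}$. Consequently $\sigma^*\xi=\xi$ and $\sigma^*\eta^*\fa=\eta^*\fa$, and by the $G$-equivariance of $j$ we obtain $\sigma^*(j_*\eta^*\fa)=j_*\eta^*\fa$. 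Hence $G$ acts trivially on the image of the splitting, and combined with (i) we get the desired $G$-module decomposition.

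The main point requiring care is the explicit description of $\phi$ in terms of the $\PP^1$-bundle decomposition and the correct sign for $c_1(N_{\tilde\Delta/Z})$; once these are in place, the whole lemma reduces to a one-line self-intersection computation and the elementary fact that $-1$ on a line bundle descends to the identity on its projectivization.
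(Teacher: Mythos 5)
Your proof is correct and follows essentially the same route as the paper: identifying $\phi$ with $\eta_*\circ j^*$ (equivalently, the $\xi$-component in the $\PP^1$-bundle decomposition of $\HH^4(\tilde\Delta,\Z)$) and computing $j^*j_*\eta^*\fa=-\xi\cdot\eta^*\fa$ via the self-intersection formula. Your explicit check that $\sigma$ acts as $-1$ on $N_{\Delta_S/S\times S}$ and hence trivially on $\tilde\Delta$ and on $\xi$ is a welcome addition — the paper dispatches (ii) with ``follows directly from (i)'' — but it is the same argument in substance.
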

\begin{proof}
The homomorphism $\phi:\HH^4(Z,\Z)\to\HH^2(S,R^2\eta_*\Z)$ factors
as
$$
\xymatrix{\HH^4(Z,\Z)\ar[r]^{j^*}
&\HH^4(\tilde\Delta,\Z)\ar[r]^{\eta_*} &\HH^2(S,\Z)}.
$$
It is easy to check that
\begin{align*}
\eta_*j^*(-j_*\eta^*\fa)
&=-\eta_*(j^*[\tilde\Delta]\cdot\eta^*\fa)\\
&= -\eta_*(-\xi\cdot\eta^*\fa)\\
&=\eta_*(\xi\cdot\eta^*\fa)\\
&=\fa,\qquad \forall \fa\in\HH^2(S,\Z),
\end{align*}
where $\xi\in\HH^2(\tilde\Delta,\Z)$ is the first Chern class of the
relative $\calO(1)$ bundle. This proves (i). The statement (ii)
follows directly from (i).
\end{proof}

\begin{lem}\label{group cohomology weight 4}
(i) The group $\HH^0(G,\HH^4(S\times S,\Z))$ is freely generated
by $p_1^*[pt]+p_2^*[pt]$, $\{p_1^*\fa_i\otimes
p_2^*\fa_i\}_{i=1}^{22}$ and $\{p_1^*\fa_i\otimes
p_2^*\fa_j+p_1^*\fa_j\otimes
p_2^*\fa_i\}_{1\leq i<j\leq 22}$.\\
(ii) For all odd $i$, we have $\HH^i(G,\HH^4(S\times S,\Z))=0$; for
all even $i>0$, we have $\HH^i(G,\HH^4(S\times
S,\Z))\cong(\Z/2\Z)^{22}$.\\
(iii) The group $\HH^0(G,\HH^4(Z,\Z))$ is freely generated by
$e_0=\tau^*(p_1^*[pt]+p_2^*[pt])$, $\{e_i=\tau^*(p_1^*\fa_i\otimes
p_2^*\fa_i)\}_{i=1}^{22}$, $\{e_{ij}=\tau^*(p_1^*\fa_i\otimes
p_2^*\fa_j+p_1^*\fa_j\otimes p_2^*\fa_i)\}_{1\leq i<j\leq 22}$ and
$\{u'_i=j_*\eta^*\fa_i\}_{1\leq i\leq 22}$.\\
(iv) For all odd $i$, we have $\HH^i(G,\HH^4(Z,\Z))=0$; for all even
$i>0$, we have $\HH^i(G,\HH^4(Z,\Z))\cong(\Z/2\Z)^{44}$.
\end{lem}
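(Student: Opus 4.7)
The plan is to reduce all four parts to standard computations of the group cohomology of $G=\Z/2\Z$ acting on free $\Z$-modules, using the Künneth decomposition of $\HH^4(S\times S,\Z)$ together with the canonical $G$-equivariant splitting from Lemma \ref{splitting lemma}.

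For (i) and (ii), since $\HH^1(S,\Z)=\HH^3(S,\Z)=0$ for a $K3$ surface, the Künneth formula yields a $G$-equivariant decomposition
$$
\HH^4(S\times S,\Z)=\bigl(p_1^*\HH^4(S,\Z)\oplus p_2^*\HH^4(S,\Z)\bigr)\oplus\bigl(p_1^*\HH^2(S,\Z)\otimes p_2^*\HH^2(S,\Z)\bigr),
$$
on which $\sigma$ swaps the two factors (with no sign, since all classes involved are of even degree). The first summand is isomorphic to the induced module $\Z[G]$, so its higher cohomology vanishes by Shapiro's lemma and its invariants are generated by $p_1^*[pt]+p_2^*[pt]$. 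The second summand splits further into the diagonal part $\bigoplus_i \Z(p_1^*\fa_i\otimes p_2^*\fa_i)$ on which $G$ acts trivially, and off-diagonal pairs $\Z(p_1^*\fa_i\otimes p_2^*\fa_j)\oplus\Z(p_1^*\fa_j\otimes p_2^*\fa_i)$ for $i<j$, each isomorphic to $\Z[G]$. Using the standard computation $\HH^k(G,\Z)=\Z,\,0,\,\Z/2\Z,\,0,\,\Z/2\Z,\ldots$ and the vanishing of cohomology on induced modules, I read off the generators in (i) and obtain $(\Z/2\Z)^{22}$ (one copy from each of the $22$ diagonal trivial summands) in every positive even degree, which gives (ii).

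For (iii) and (iv), I invoke the $G$-equivariant direct sum decomposition from Lemma \ref{splitting lemma}(ii),
$$
\HH^4(Z,\Z)\cong \HH^4(S\times S,\Z)\oplus \HH^2(S,\Z),
$$
where $G$ acts trivially on the second summand and $\tau^*$ identifies the generators of (i) with $e_0,e_i,e_{ij}$. Since group cohomology commutes with direct sums of $G$-modules, the extra copy of $\HH^2(S,\Z)$ (a rank-$22$ trivial $G$-module) contributes $\Z^{22}$ to $\HH^0(G,-)$, generated by $u'_i=j_*\eta^*\fa_i$, and $(\Z/2\Z)^{22}$ to each $\HH^i(G,-)$ with $i>0$ even, while odd cohomology remains zero. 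Summing with the results of the previous paragraph gives $(\Z/2\Z)^{22}\oplus(\Z/2\Z)^{22}=(\Z/2\Z)^{44}$ in each positive even degree, yielding (iv) together with the generating set in (iii).

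The main issue is bookkeeping rather than any conceptual obstacle: one must correctly identify each Künneth or splitting summand as either a $\Z[G]$-module induced from the trivial subgroup (hence cohomologically trivial with invariants of the form $x+\sigma x$) or a trivial $G$-module (contributing one copy of $\Z/2\Z$ per positive even degree), and verify that no Koszul sign arises in the swap action on even-degree classes.
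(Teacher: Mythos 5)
Your proposal is correct and follows essentially the same route as the paper, which likewise deduces (i) from the K\"unneth formula, (iii) from Lemma \ref{splitting lemma}, and (ii), (iv) by direct computation of the group cohomology; your write-up simply makes explicit the decomposition into induced $\Z[G]$-summands (cohomologically trivial) and trivial summands (each contributing $\Z/2\Z$ in positive even degrees) that the paper leaves implicit. The sign check is also fine, since all classes involved live in even degree.
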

\begin{proof}
Statement (i) follows from the K\"unneth formula
$$
\HH^4(S\times S,\Z)=\HH^4(S,\Z)\otimes\HH^0(S,\Z)\oplus
\HH^2(S,\Z)\otimes\HH^2(S,\Z)\oplus \HH^0(S,\Z)\otimes\HH^4(S,\Z).
$$
Then (iii) follows easily from (i) and Lemma \ref{splitting lemma}.
By direct calculation of the group cohomology groups, we get (ii)
and (iv).
\end{proof}

We will use the following spectral sequences frequently.
\begin{prop}[Grothendieck, chapter V of \cite{grothendieck}]\label{spectral sequences}
Let $X$ be a topological space with an action by a finite group $G$.
Let $Y=X/G$ be the quotient and $\pi:X\to Y$ the natural map. Let
$\Gamma^G(-)$ be the covariant functor from the category of
$G$-sheaves on $X$ to the category of abelian groups which sends a
$G$-sheaf $\calF$ to the $G$-invariant sections $\HH^0(X,\calF)^G$.
The right derived functor of $\Gamma^G$ is denoted $\HH^i(G;X,\Z)$.
Then there are two spectral sequences associated to the situation,
$$
_IE_2^{p,q}=\HH^p(Y,R^q(\pi_*^G)\calF)\Longrightarrow\HH^{p+q}(G;X,\calF)
$$
and
$$
_{II}E_2^{p,q}=\HH^p(G,\HH^q(X,\calF))\Longrightarrow\HH^{p+q}(G;X,\calF)
$$
where $\pi^G_*(\calF)=(\pi_*\calF)^G$ for all $G$-sheaves $\calF$.
\end{prop}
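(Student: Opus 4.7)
The plan is to apply Grothendieck's composition-of-functors spectral sequence to two different factorizations of the functor $\Gamma^G\colon G\text{-Sh}(X)\to\mathrm{Ab}$, where $G\text{-Sh}(X)$ denotes the abelian category of $G$-equivariant sheaves on $X$. I would first verify that this category has enough injectives (via an equivariant Godement-type construction, or by embedding any $G$-sheaf into a coinduced $G$-sheaf built out of an injective resolution of the underlying ordinary sheaf), so that the right derived functors $R^i\Gamma^G=\HH^i(G;X,-)$ are well defined.

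For the first factorization, define $\pi_*^G\colon G\text{-Sh}(X)\to\mathrm{Sh}(Y)$ by $(\pi_*^G\calF)(U)=\calF(\pi^{-1}U)^G$, and let $\Gamma_Y$ denote global sections on $Y$; then $\Gamma^G=\Gamma_Y\circ\pi_*^G$, and the resulting spectral sequence has $E_2$-page $\HH^p(Y,R^q(\pi_*^G)\calF)$, which is $_IE_2$. For the second factorization, regard global sections as a functor $\Gamma_X\colon G\text{-Sh}(X)\to G\text{-Mod}$; then $\Gamma^G=(-)^G\circ\Gamma_X$, and its derived functors are ordinary sheaf cohomology $\HH^q(X,\calF)$ with its natural $G$-module structure in the first slot and group cohomology $\HH^p(G,-)$ in the second, yielding $_{II}E_2$.

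The main obstacle is verifying the acyclicity hypothesis of Grothendieck's theorem: in each factorization, the first functor must send injective $G$-sheaves to objects acyclic for the second. For $\Gamma^G=\Gamma_Y\circ\pi_*^G$, I would show that injective $G$-sheaves are sent to flasque sheaves on $Y$: an injective $G$-sheaf is flasque as an ordinary sheaf, and taking $G$-invariants preserves the surjectivity of the restriction maps using finiteness of $G$ via an averaging/norm argument on sections. For $\Gamma^G=(-)^G\circ\Gamma_X$, I would use the standard fact that the global sections of an injective $G$-sheaf form a coinduced—hence $(-)^G$-acyclic—$G$-module, a property built into the explicit construction of enough injectives in $G\text{-Sh}(X)$.

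With these acyclicity statements in hand, Grothendieck's composition-of-functors spectral sequence theorem (Chapter V of \cite{grothendieck}) immediately produces both $_IE_2$ and $_{II}E_2$ with the stated $E_2$-pages and common abutment $\HH^{p+q}(G;X,\calF)$, completing the proof.
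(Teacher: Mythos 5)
The paper offers no proof of this proposition: it is stated as a citation to Chapter V of Grothendieck's T\^ohoku paper, and your argument --- factoring $\Gamma^G$ as $\Gamma_Y\circ\pi_*^G$ and as $(-)^G\circ\Gamma_X$ and invoking the composite-functor spectral sequence --- is exactly the content of that reference, so in substance you have reproduced the intended (omitted) proof. One caveat: your justification that $\pi_*^G$ sends injective $G$-sheaves to flasque sheaves via an ``averaging/norm argument'' does not work as stated over $\Z$ (the norm $\sum_{g}g\cdot t$ of an extension $t$ of an invariant section $s$ restricts to $|G|\,s$, not $s$, and one cannot divide by $|G|$); the standard repair is to use that an injective $G$-sheaf is a direct summand of a sheaf coinduced from an injective ordinary sheaf $\calI$, for which the $G$-invariants of the pushforward are just $\pi_*\calI$, hence flasque --- a fact already implicit in your construction of enough injectives.
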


Now we apply the second spectral sequence to the special case
$\pi:Z\to F=Z/G$ and $\calF=\Z$. Note that $\HH^i(Z,\Z)=0$ for odd
$i$. This together with Lemma \ref{group cohomology weight 4} force
the spectral sequence to degenerate at the $_{II}E_2$ page. Thus we get the following lemma.
\begin{lem}\label{second spectral sequence lemma}
(i) There is a short exact sequence
$$
\xymatrix{
 0\ar[r] &\Z/2\Z\ar[r] &\HH^2(G;Z,\Z)\ar[r] &\HH^2(Z,\Z)^G\ar[r] &0.
}
$$
(ii) We also have the following short exact sequence
$$
\xymatrix{
 0\ar[r] &T_1\ar[r] &\HH^4(G;Z,\Z)\ar[r] &\HH^4(Z,\Z)^G\ar[r] &0,
}
$$
where $T_1$ is a torsion group of order 4.
\end{lem}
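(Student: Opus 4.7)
The plan is to apply the Grothendieck spectral sequence of Proposition \ref{spectral sequences},
$$_{II}E_2^{p,q}=\HH^p(G,\HH^q(Z,\Z))\Longrightarrow\HH^{p+q}(G;Z,\Z),$$
and to prove that on the total-degree lines $n=2$ and $n=4$ the sequence degenerates at $E_2$. The two short exact sequences will then fall out of the induced filtration $F^\bullet\HH^n(G;Z,\Z)$ with graded pieces $F^p/F^{p+1}=E_\infty^{p,n-p}$.

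First I would record that the $E_2$-page is very sparse in the relevant range. Because $Z$ is the blow-up of $S\times S$ along the smooth diagonal $\Delta_S$ with $S$ a $K3$ surface, one has $\HH^q(Z,\Z)=0$ for all odd $q$, so $E_2^{p,q}=0$ whenever $q$ is odd. Combined with $\HH^i(G,\HH^2(Z,\Z))=0$ for odd $i$ from Lemma \ref{second cohomology of Z}(ii) and the standard $\HH^i(G,\Z)=0$ for odd $i$ (group cohomology of $G=\Z/2\Z$ on a trivial module), one checks case by case that for each of the positions $(0,2),(2,0)$ (in total degree $2$) and $(0,4),(2,2),(4,0)$ (in total degree $4$), every incoming or outgoing differential $d_r$ with $r\geq 2$ either maps to/from a row of odd $q$, or maps to/from the positions $(1,2)$ or $(3,2)$, or else falls out of the first quadrant. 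Hence $E_\infty^{p,q}=E_2^{p,q}$ at all these positions.

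Reading off the filtration then gives the result. For $n=2$ the only nonzero graded pieces are $E_\infty^{2,0}=\HH^2(G,\Z)=\Z/2\Z$ and $E_\infty^{0,2}=\HH^2(Z,\Z)^G$, immediately yielding (i). For $n=4$ the three nonzero graded pieces are
$$E_\infty^{4,0}=\HH^4(G,\Z)=\Z/2\Z,\qquad E_\infty^{2,2}=\HH^2(G,\HH^2(Z,\Z))=\Z/2\Z,\qquad E_\infty^{0,4}=\HH^4(Z,\Z)^G,$$
where the middle equality comes from Lemma \ref{second cohomology of Z}(iii). I would then set $T_1:=F^2\HH^4(G;Z,\Z)$; the sub-filtration $0\subset F^4\subset F^2=T_1$ exhibits $T_1$ as an extension of $\Z/2\Z$ by $\Z/2\Z$, so $|T_1|=4$, and the quotient $\HH^4(G;Z,\Z)/T_1=E_\infty^{0,4}=\HH^4(Z,\Z)^G$ produces (ii).

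The only step requiring a little attention is the vanishing of the $d_3$ differentials, since $d_2$ always lowers $q$ by $1$ and is therefore killed by the parity vanishing on $Z$. Specifically, one needs to kill $d_3:E_3^{0,2}\to E_3^{3,0}$ using $\HH^3(G,\Z)=0$; $d_3:E_3^{0,4}\to E_3^{3,2}$ using $\HH^3(G,\HH^2(Z,\Z))=0$ from Lemma \ref{second cohomology of Z}(ii); and the incoming $d_3:E_3^{1,2}\to E_3^{4,0}$ using $\HH^1(G,\HH^2(Z,\Z))=0$ from the same lemma. The main obstacle is purely bookkeeping — making sure no differential has been overlooked — and no new geometric input beyond the two previous lemmas is required.
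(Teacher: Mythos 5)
Your proposal is correct and follows essentially the same route as the paper: the paper likewise invokes the second spectral sequence $_{II}E_2^{p,q}=\HH^p(G,\HH^q(Z,\Z))$, notes that the vanishing of $\HH^{\mathrm{odd}}(Z,\Z)$ together with Lemma \ref{group cohomology weight 4} forces degeneration at $E_2$, and reads off the two exact sequences from the filtration. Your explicit verification of the $d_3$ differentials via Lemma \ref{second cohomology of Z}(ii) is exactly the bookkeeping the paper leaves implicit.
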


Consider the first spectral sequence $_IE$. Direct calculation of
the stalks gives
$$
R^i\pi^G_*\Z=\begin{cases}
 \Z, &i=0;\\
 0, &i\text{ odd};\\
 (\Z/2\Z)_{\Delta}, &i>0\text{ even}.
\end{cases}
$$
This creates enough zeros in the $_IE_2$ page and forces the
spectral sequence to degenerate at this page. Hence we have the following lemma.
\begin{lem}\label{first spectral sequence lemma}
(i) There is a short exact sequence
$$
\xymatrix{
 0\ar[r] &\HH^2(S^{[2]},\Z)\ar[r] &\HH^2(G;Z,\Z)\ar[r]
 &\HH^0(\Delta,\Z/2\Z)\ar[r] &0.
}
$$
(ii) We also have a short exact sequence
$$
\xymatrix{
 0\ar[r] &\HH^4(S^{[2]},\Z)\ar[r] &\HH^4(G;Z,\Z)\ar[r] &T_2\ar[r] &0,
}
$$
where $T_2$ is torsion group that fits into the following exact
sequence
$$
\xymatrix{
 0\ar[r] &\HH^2(\Delta,\Z/2\Z)\ar[r] &T_2\ar[r]
 &\HH^0(\Delta,\Z/2\Z)\ar[r] &0.
}
$$
In particular, $T_2$ is of order $2^{24}$.
\end{lem}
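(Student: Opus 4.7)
My plan is to apply the first Grothendieck spectral sequence of Proposition \ref{spectral sequences} to $\pi\colon Z\to S^{[2]}$ with trivial coefficients,
$$
{}_I E_2^{p,q} = \HH^p\bigl(S^{[2]}, R^q \pi^G_* \Z\bigr) \Longrightarrow \HH^{p+q}(G; Z, \Z),
$$
and to read off the stated exact sequences from the filtration in total degrees $2$ and $4$, provided the spectral sequence degenerates at $E_2$.

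The $E_2$ page is immediate from the stalk computation of $R^q \pi^G_* \Z$ displayed just before the lemma: the $q=0$ row is $\HH^p(S^{[2]}, \Z)$, the odd rows vanish, and every even row $q \geq 2$ is $\HH^p(\Delta, \Z/2\Z)$, using that $(\Z/2\Z)_\Delta$ is the pushforward along the closed embedding $j'\colon \Delta\hookrightarrow S^{[2]}$. The crucial observation will be that \emph{both} kinds of nonzero rows vanish in odd columns: $\HH^{\mathrm{odd}}(S^{[2]}, \Z) = 0$ by the known Betti numbers and torsion-freeness of $\HH^*(S^{[2]}, \Z)$, and $\HH^{\mathrm{odd}}(\Delta, \Z/2\Z) = 0$ via the projective bundle formula for $\Delta = \PP(T_S)\to S$ together with $\HH^{\mathrm{odd}}(S, \Z/2\Z) = 0$ on a $K3$ surface. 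Hence ${}_I E_2^{p,q}$ is nonzero only when both $p$ and $q$ are even, so any differential $d_r\colon E_r^{p,q}\to E_r^{p+r, q-r+1}$ would require $r$ to be simultaneously even (to preserve the parity of $p$) and odd (to preserve the parity of $q$). This forces $d_r = 0$ for all $r\geq 2$. I expect this parity argument to be the main point: the only real obstacle is degeneration, and parity delivers it with essentially no calculation.

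With degeneration in hand, part (i) is immediate: in total degree $2$ the only nonvanishing $E_\infty$ pieces are $E_\infty^{2,0} = \HH^2(S^{[2]}, \Z)$ and $E_\infty^{0,2} = \HH^0(\Delta, \Z/2\Z)$, and the resulting two-step filtration yields the desired short exact sequence. For part (ii), in total degree $4$ the nonzero pieces are $\HH^4(S^{[2]}, \Z)$, $\HH^2(\Delta, \Z/2\Z)$, and $\HH^0(\Delta, \Z/2\Z)$; defining $T_2 = \HH^4(G; Z, \Z)/\HH^4(S^{[2]}, \Z)$ gives the described extension of $\HH^0(\Delta, \Z/2\Z)$ by $\HH^2(\Delta, \Z/2\Z)$.

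Finally, to verify $|T_2| = 2^{24}$, I would invoke the projective bundle formula for $\Delta = \PP(T_S)$ modulo $2$. Since $c_1(T_S) = 0$ and $c_2(T_S) = \chi_{\mathrm{top}}(S) = 24$ is even, the defining relation collapses to $\xi^2 \equiv 0 \pmod 2$, so $\HH^2(\Delta, \Z/2\Z) \cong \HH^2(S, \Z/2\Z) \oplus \HH^0(S, \Z/2\Z)\cdot\xi \cong (\Z/2\Z)^{23}$. Multiplying by $|\HH^0(\Delta, \Z/2\Z)| = 2$ gives $|T_2| = 2^{24}$.
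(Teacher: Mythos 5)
Your proof is correct and follows essentially the same route as the paper: the paper computes the stalks of $R^q\pi^G_*\Z$ and asserts that the resulting zeros force degeneration at the ${}_IE_2$ page, which is precisely the parity argument you spell out, and then reads off both short exact sequences from the filtration in total degrees $2$ and $4$. The only point worth noting is that the degeneration (yours and the paper's implicit one) also needs the external inputs $\HH^{\mathrm{odd}}(S^{[2]},\Z)=0$ and $\HH^{\mathrm{odd}}(\Delta,\Z/2\Z)=0$, both standard, and your computation $|T_2|=2^{23}\cdot 2=2^{24}$ via the projective bundle formula for $\Delta=\PP(T_S)$ correctly supplies the order count that the paper states without proof.
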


\begin{cor}[Beauville, \cite{beauville}]\label{size of T3}
(i) The homomorphism $\pi^*:\HH^2(S^{[2]},\Z)\to\HH^2(Z,\Z)^G$ is an
isomorphism and hence we have a canonical isomorphism
\begin{equation}\label{H2}
\HH^2(S^{[2]},\Z)=\HH^2(S,\Z)\oplus \Z\delta
\end{equation}
where $\delta$ satisfies $2\delta=[\Delta]$.\\
(ii) There is a short exact sequence
$$
\xymatrix{
 0\ar[r] &\HH^4(S^{[2]},\Z)\ar[r]^{\pi^*} &\HH^4(Z,\Z)^G\ar[r]
 &T_3\ar[r] &0
}
$$
where $T_3\cong(\Z/2\Z)^{22}$.
\end{cor}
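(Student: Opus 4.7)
My plan is to derive both parts of the corollary simultaneously by comparing the two short exact sequences of Lemma \ref{first spectral sequence lemma} and Lemma \ref{second spectral sequence lemma} through their common middle term $M_k := \HH^k(G;Z,\Z)$ for $k=2,4$. Writing $A_k := \pi^\ast\HH^k(S^{[2]},\Z)$ and $B_k := \HH^k(Z,\Z)^G$, the first lemma realizes $A_k$ as a subgroup of $M_k$ with finite $2$-primary quotient $Q_k$ of order $2$ (for $k=2$) or $2^{24}$ (for $k=4$), and the second realizes a finite torsion subgroup $N_k \subset M_k$ of order $2$ or $4$ respectively, with quotient $B_k$. The map $\pi^\ast$ is the composition $A_k \hookrightarrow M_k \twoheadrightarrow B_k$, so the entire corollary reduces to analyzing how $A_k$ and $N_k$ sit inside $M_k$.

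Two inputs will drive the comparison. First, $\HH^\ast(S^{[2]},\Z)$ is torsion-free (a standard fact for Hilbert schemes of $K3$ surfaces), so $A_k \cap N_k = 0$; this makes $\pi^\ast$ injective in both degrees and identifies $A_k+N_k \subset M_k$ as a subgroup of index $|Q_k|/|N_k|$. Second, since $\pi$ is finite of degree $2$ and the nontrivial Galois element $\sigma$ acts trivially on $G$-invariants, the projection-formula identity $\pi^\ast\pi_\ast = 1 + \sigma^\ast$ restricts to multiplication by $2$ on $B_k$; consequently $2 B_k \subset \pi^\ast\HH^k(S^{[2]},\Z)$, so the cokernel of $\pi^\ast$ is annihilated by $2$ in every degree.

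For part (i), the index count gives $|M_2/(A_2 + N_2)| = 2/2 = 1$, hence $A_2 + N_2 = M_2$ and $\pi^\ast$ is already surjective onto $B_2$. Combining this isomorphism with the explicit direct sum description of $B_2$ in Lemma \ref{second cohomology of Z}(i) yields the canonical decomposition $\HH^2(S^{[2]},\Z) = \HH^2(S,\Z) \oplus \Z\delta$ upon setting $\delta := (\pi^\ast)^{-1}[\tilde\Delta]$. The relation $2\delta = [\Delta]$ then drops out of the ramification identity $\pi^\ast[\Delta] = 2[\tilde\Delta]$, valid because $\pi$ is a double cover ramified along $\tilde\Delta$.

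For part (ii) the same injectivity argument applies, and the cokernel of $\pi^\ast$ identifies with $T_3 \cong M_4/(A_4+N_4)$, of order $|Q_4|/|N_4| = 2^{24}/4 = 2^{22}$; the second input then forces $T_3$ to be annihilated by $2$, so $T_3 \cong (\Z/2\Z)^{22}$. The only step that is not purely formal is this last $2$-torsion identification: the spectral-sequence filtration on $M_4$ would in principle allow a $4$-torsion quotient of the same order, and the projection-formula argument is genuinely needed to rule that out. Everything else is a straightforward diagram chase between the two three-term sequences and an index/order count.
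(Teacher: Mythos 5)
Your argument is correct and follows essentially the same route as the paper: both parts come from comparing the two short exact sequences of Lemmas \ref{first spectral sequence lemma} and \ref{second spectral sequence lemma} through the common middle term $\HH^k(G;Z,\Z)$, with the order count $2^{24}/4=2^{22}$ and the identity $\pi^*\pi_*=1+\sigma^*=2$ on $G$-invariants supplying the $2$-torsion statement exactly as in the text. The only deviation is that you derive the key disjointness $A_k\cap N_k=0$ (equivalently, injectivity of $\pi^*$ and exactness of the paper's $3\times 3$ diagram) from torsion-freeness of $\HH^*(S^{[2]},\Z)$, whereas the paper checks for $k=2$ that $\HH^2(G,\HH^0(Z,\Z))\to\HH^0(\Delta,R^2\pi_*^G\Z)$ is an isomorphism and leaves the $k=4$ case implicit in its diagram; both justifications are valid.
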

\begin{proof}
To prove (i), we first note that the composition
$$
\HH^2(G,\HH^0(Z,\Z))=\Z/2\Z\rightarrow \HH^2(G;Z,\Z)\rightarrow
\HH^0(\Delta,R^2\pi_*^G\Z)=\Z/2\Z
$$
is an isomorphism. By easy diagram chasing, this forces the composition
$$
\HH^2(S^{[2]},\Z)\rightarrow \HH^2(G;Z,\Z)\rightarrow \HH^2(Z,\Z)^G
$$
to be an isomorphism. Hence there is an element
$\delta\in\HH^2(S^{[2]},\Z)$ such that $\pi^*\delta=[\tilde\Delta]$.
Since $\pi^*[\Delta]=2[\tilde\Delta]$, we get $2\delta=[\Delta]$. To
prove (ii), we consider the following diagram
$$
\xymatrix{
 & &0 &0 &\\
 0\ar[r] &\HH^4(S^{[2]},\Z)\ar[r]^{\pi^*} &\HH^4(Z,\Z)^G\ar[r]\ar[u]
 & T_3\ar[r]\ar[u] &0\\
 0\ar[r] &\HH^4(S^{[2]},\Z)\ar@{=}[u]\ar[r]
 &\HH^4(G;Z,\Z)\ar[r]\ar[u] &T_2\ar[r]\ar[u] &0\\
 & &T_1\ar@{=}[r]\ar[u] &T_1\ar[u] &\\
 & &0\ar[u] &0\ar[u] &
}
$$
This implies that $T_3$ is of order $2^{22}$. For any
$\alpha\in\HH^4(Z,\Z)^G$, we have $2\alpha=\pi^*(\pi_*\alpha)$. This
implies that $T_3$ is of 2-torsion. Hence we have
$T_3\cong(\Z/2\Z)^{22}$.
\end{proof}

\begin{rmk}
For any $\fa\in\HH^2(S,\Z)$, its image in $\HH^2(F,\Z)$ will be
denoted $\hat\fa$. Hence $\HH^2(S^{[2]},\Z)$ has a basis
$$
\{\hat\fa_1,\hat\fa_2,\ldots,\hat\fa_{22},\delta\}.
$$
The element $\hat\fa_i$ can be described as the unique element
satisfying
$$
\pi^*\hat\fa_i=\tau^*(p_1^*\fa_i+p_2^*\fa_i).
$$
\end{rmk}

Consider the following diagram
$$
\xymatrix{
 & &T_3\ar@{=}[r] &T_3 &\\
 0\ar[r] &\Sym^2(\HH^2(Z,\Z)^G)\ar[r]^{\quad\tilde\rho} &\HH^4(Z,\Z)^G\ar[r]\ar[u]
 &T_4\ar[r]\ar[u] &0\\
 0\ar[r]
 &\Sym^2(\HH^2(S^{[2]},\Z))\ar[r]^{\quad\rho}\ar[u]_{\simeq}
 &\HH^4(S^{[2]},\Z)\ar[r]\ar[u]_{\pi^*} &\mathcal{T}^4(S^{[2]})\ar[r]\ar[u] &0
}
$$
where all the 3-terms columns are short exact. Use the canonical
isomorphism in (i) of Lemma \ref{second cohomology of Z}, we know
that the image of $\tilde\rho$ is freely generated by
\begin{align*}
\tau^*(p_1^*\fa_i+p_2^*\fa_i)\cup\tau^*(p_1^*\fa_i+p_2^*\fa_i)
&=\tau^*(p_1^*(\fa_i\cup\fa_i)+p_2^*(\fa_i\cup\fa_i)+
2p_1^*\fa_i\cup
p_2^*\fa_i)\\
&= (\fa_i\cdot\fa_i)e_0 + 2 e_i,\quad 1\leq i\leq 22;\\
\tau^*(p_1^*\fa_i+p_2^*\fa_i)\cup\tau^*(p_1^*\fa_j+p_2^*\fa_j)
&=\tau^*(p_1^*(\fa_i\cup\fa_j)+p_2^*(\fa_i\cup\fa_j)+\\
&\qquad p_1^*\fa_i\cup p_2^*\fa_j+p_1^*\fa_j\cup p_2^*\fa_i)\\
&=(\fa_i\cdot\fa_j)e_0 + e_{ij},\quad 1\leq i<j\leq 22;\\
\tau^*(p_1^*\fa_i+p_2^*\fa_i)\cup[\tilde\Delta] &=
j_*(j^*\tau^*(p_1^*\fa_i+p_2^*\fa_i))\\
&=j_*(2\eta^*\fa_i)\\
&=2u'_i
\end{align*}
and
\begin{equation}\label{Delta tilde squaired}
[\tilde\Delta]^2=j_*j^*[\tilde\Delta]=-j_*\xi
\end{equation}
where $\xi$ is the first Chern class of the relative $\calO(1)$
bundle of $\tilde\Delta=\PP(T_S)\to S$. The next lemma gives us
\begin{equation}\label{push forward of xi}
j_*\xi=j_*(c_1(\mathcal{E}))=j_*(c_1(\mathcal{E})\cap\eta^*[S])=\tau^*[\Delta_S],
\end{equation}
where $\Delta_S\subset S\times S$ is the diagonal and $\mathcal{E}$
fits into the following short exact sequence
$$
\xymatrix{ 0\ar[r] &\calO_{\tilde\Delta}(-1)\ar[r] &\eta^* T_S\ar[r]
&\mathcal{E}\ar[r] &0. }
$$

\begin{lem}[\cite{fulton}, Proposition 6.7]\label{fundamental formla of blow up}
Let $X$ be a smooth projective variety and $Y\subset X$ a smooth
subvariety of codimension $d$. Let $\tau:\tilde{X}\to X$ be the blow
up of $X$ along $Y$ and $\tilde{Y}=\PP(\mathscr{N}_{Y/X})\subset
\tilde{X}$ the exceptional divisor. Let $j:\tilde{Y}\to\tilde{X}$ be
the inclusion, $\eta:\tilde{Y}\to Y$ the natural projection and
$i:Y\to X$ the inclusion. On $\tilde{Y}$, we have the following
short exact sequence
$$
 \xymatrix{ 0\ar[r] &\calO_{\tilde{Y}}(-1)\ar[r] &\eta^*\mathscr{N}_{Y/X}\ar[r] &\mathcal{E}\ar[r]
 &0.
 }
$$
Then we have
$$
\tau^*i_*(x)=j_*(c_{d-1}(\mathcal{E})\cap \eta^*x)
$$
for all $x$ in the cohomology or Chow groups of $Y$.
\end{lem}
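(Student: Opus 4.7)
The plan is to recognize this formula as a standard instance of Fulton's excess intersection theorem applied to the fiber square
$$
\xymatrix{
\tilde{Y}\ar[r]^j\ar[d]_\eta & \tilde{X}\ar[d]^\tau \\
Y\ar[r]^i & X.
}
$$

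First I would verify that this square is Cartesian. By the universal property of the blow-up, the scheme-theoretic preimage $\tau^{-1}(Y)$ coincides with the exceptional divisor $\tilde{Y} = \PP(\mathscr{N}_{Y/X})$, and the projection to $Y$ is exactly $\eta$. The essential feature of the square is that it is \emph{not} transverse: $Y$ has codimension $d$ in $X$, while $\tilde{Y}$ has codimension $1$ in $\tilde{X}$, so a naive pullback formula $\tau^* i_* = j_* \eta^*$ cannot hold for dimension reasons, and one must correct it by an excess class.

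Next I would identify the excess normal bundle. From the description $\tilde{X} = \Proj_X(\bigoplus_{n\ge 0}\mathscr{I}_Y^n)$, a local computation in the Rees algebra of $\mathscr{I}_Y$ yields a canonical isomorphism $\mathscr{N}_{\tilde{Y}/\tilde{X}} \cong \calO_{\tilde{Y}}(-1)$, together with an inclusion into $\eta^*\mathscr{N}_{Y/X}$ which is precisely the tautological subbundle inclusion on the projectivization. Consequently, the quotient bundle is exactly the bundle $\mathcal{E}$ of rank $d-1$ appearing in the statement, and measures the failure of transversality.

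Finally, applying Fulton's refined intersection formula (Theorem 6.2 and its corollaries in \cite{fulton}) one concludes
$$
\tau^* i_*(x) = j_*\bigl(c_{d-1}(\mathcal{E}) \cap \eta^* x\bigr)
$$
for any $x$ in the Chow group of $Y$; the corresponding statement in singular cohomology follows either by functoriality of cycle class maps, or directly via Thom isomorphisms for $\mathscr{N}_{Y/X}$ and $\calO_{\tilde{Y}}(-1)$ combined with the excess Euler class $c_{d-1}(\mathcal{E})$. The only nontrivial point is the bundle identification of the previous step; once that is in hand, the conclusion is a formal consequence of the excess intersection machinery.
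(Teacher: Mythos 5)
The paper offers no proof of this lemma at all: it is quoted verbatim from Fulton's \emph{Intersection Theory}, Proposition~6.7, and used as a black box. Your reconstruction is correct and is essentially the argument given in the cited source — the square is Cartesian, the excess bundle of the non-transverse square is $\mathcal{E}=\eta^*\mathscr{N}_{Y/X}/\calO_{\tilde{Y}}(-1)$ via the tautological-subbundle identification $\mathscr{N}_{\tilde{Y}/\tilde{X}}\cong\calO_{\tilde{Y}}(-1)$, and the formula then follows from the excess intersection theorem (with the cohomological version obtained by the cycle class map or Thom classes), exactly as in Fulton.
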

Let $A=(a_{ij})_{22\times 22}$, with $a_{ij}=\fa_i\cdot\fa_j\in\Z$,
be the intersection matrix of $\HH^2(S,\Z)$. Since the intersection
form is unimodular, we know that $B=A^{-1}$ is integral. Actually,
let $\{\fa_1^{\vee},\fa_2^\vee,\ldots,\fa_{22}^\vee\}$ be the dual
basis of $\HH^2(S,\Z)^\vee$. The intersection form gives a canonical
isomorphism $\HH^2(S,\Z)^\vee\cong\HH^2(S,\Z)$, under which we have
$$
\fa_i^\vee=\sum_{j=1}^{22}b_{ij}\fa_j.
$$
As a correspondence, $\Delta_S$ acts trivially on the cohomology of $S$;
it follows that
$$
[\Delta_S]=\sum_{i=1}^{22} \fa_i \otimes \fa_i^\vee
+[pt]\otimes[S]+[S]\otimes [pt].
$$
This implies
$$
\tau^*[\Delta_S]=\sum_{1\leq i<j\leq 22}b_{ij}e_{ij}
+\sum_{i=1}^{22}b_{ii}e_i + e_0.
$$
This combined with \eqref{Delta tilde squaired} and \eqref{push
forward of xi} implies that
$$
[\tilde\Delta]^2=-\sum_{1\leq i<j\leq 22}b_{ij}e_{ij}
-\sum_{i=1}^{22}b_{ii}e_i - e_0.
$$
With the chosen basis, the matrix representation for $\tilde\rho$ is
$$
M_{\tilde\rho}=\begin{pmatrix}
 2 &       &  &  &       &  &\vdots &  &       & \\
   &\ddots &  &  &       &  &a_{ii} &  &       & \\
   &       &2 &  &       &  &\vdots &  &       & \\
   &       &  &1 &       &  &\vdots &  &       & \\
   &       &  &  &\ddots &  &a_{ij} &  &       & \\
   &       &  &  &       &1 &\vdots &  &       & \\
 \cdots &-b_{ii} &\cdots &\cdots &-b_{ij} &\cdots &-1 & & &\\
   &       &  &  &       &  &      &2 &       & \\
   &       &  &  &       &  &      &  &\ddots & \\
   &       &  &  &       &  &      &  &       &2\\
\end{pmatrix}
$$
In the matrix $M_{\tilde\rho}$, we first have $22$ of ``2''s on diagonal indexed by $(i,i)$, $1\leq i\leq 22$; then we have 231 of ``1''s
indexed by $(i,j)$, $1\leq i\leq j\leq 22$; after that there is a single ``$-1$'' followed by 22 of ``2''s. In the row
(resp. column) corresponding to the diagonal entry ``$-1$'', we have all the entries $b_{ij}$ of $B$ sitting before (resp. $a_{ij}$ of
$A$ sitting above) ``$-1$''. All the remaining entries of $M_{\tilde\rho}$ are ``0''s. One checks that $\det(M_{\tilde\rho})=5\cdot 2^{45}$. Hence we have
the following lemma.
\begin{lem}\label{size of torsions}
(i) The group $T_4$ is of order $5\cdot 2^{45}$.\\
(ii) The group $\mathcal{T}^4(S^{[2]})$ is of order $5\cdot 2^{23}$.
\end{lem}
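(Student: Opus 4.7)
\textbf{Proof proposal for Lemma \ref{size of torsions}.}

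My plan is to get (i) from a direct determinant calculation on $M_{\tilde\rho}$ and then to deduce (ii) by a snake-lemma chase on the $3\times 3$ commutative diagram displayed just before the statement.

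For part (i), I first note that both $\Sym^2(\HH^2(Z,\Z)^G)$ and $\HH^4(Z,\Z)^G$ are free abelian groups of the same rank: $\HH^2(Z,\Z)^G$ has rank $23$ by Lemma \ref{second cohomology of Z}(i), so its symmetric square has rank $\binom{24}{2}=276$, while the generators in Lemma \ref{group cohomology weight 4}(iii) give $1+22+231+22=276$ for $\HH^4(Z,\Z)^G$. Since $\tilde\rho$ becomes an isomorphism after tensoring with $\Q$ (by Lemma \ref{lem sym2}), $T_4$ is a finite group and $|T_4|=|\det(M_{\tilde\rho})|$. To compute this determinant I would block-decompose $M_{\tilde\rho}$: the $22$ rows/columns indexed by the $u'_i$ decouple cleanly and contribute $2^{22}$. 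On the remaining $254\times 254$ block the only nonzero off-diagonal entries lie in the single row (indexed by $e_0$) and single column (indexed by $[\tilde\Delta]^2$) that meet at the entry $-1$. Applying the Schur-complement formula with $A=\mathrm{diag}(2I_{22},I_{231})$ gives
\[
\det\begin{pmatrix}A & B\\ C & -1\end{pmatrix}=\det(A)\bigl(-1-CA^{-1}B\bigr)=2^{22}\Bigl(-1+\tfrac12\sum_i a_{ii}b_{ii}+\sum_{i<j}a_{ij}b_{ij}\Bigr).
\]
The parenthesized quantity is $-1+\tfrac12\mathrm{tr}(AB)=-1+\tfrac12\cdot 22=10$ (using that $AB=I_{22}$ and that $A,B$ are symmetric so that $\sum_{i,j}a_{ij}b_{ij}=\mathrm{tr}(AB)$). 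Hence $|\det(M_{\tilde\rho})|=2^{22}\cdot 2^{22}\cdot 10=5\cdot 2^{45}$, proving (i).

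For part (ii), I apply the snake lemma to the displayed diagram
\[
\xymatrix@C=1.2pc{
0\ar[r] &\Sym^2(\HH^2(S^{[2]},\Z))\ar[r]^-{\rho}\ar[d]_{\simeq} &\HH^4(S^{[2]},\Z)\ar[r]\ar[d]^{\pi^*} &\mathcal{T}^4(S^{[2]})\ar[r]\ar[d] &0\\
0\ar[r] &\Sym^2(\HH^2(Z,\Z)^G)\ar[r]^-{\tilde\rho} &\HH^4(Z,\Z)^G\ar[r] &T_4\ar[r] &0.
}
\]
The left vertical arrow is an isomorphism and $\pi^*$ is injective with cokernel $T_3$ (Corollary \ref{size of T3}(ii)), so the snake sequence degenerates to a short exact sequence
\[
0\to\mathcal{T}^4(S^{[2]})\to T_4\to T_3\to 0.
\]
Combining $|T_4|=5\cdot 2^{45}$ with $|T_3|=2^{22}$ yields $|\mathcal{T}^4(S^{[2]})|=5\cdot 2^{23}$.

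The only non-bookkeeping step is the Schur-complement calculation, and the only substantive input there is recognizing $\sum_{i,j}a_{ij}b_{ij}=\mathrm{tr}(AB)=22$; this is the one place where I would expect to slow down and double-check signs. Once that identity is in hand, both statements fall out cleanly.
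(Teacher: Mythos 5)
Your proposal is correct and follows the paper's own route: the paper asserts $\det(M_{\tilde\rho})=5\cdot 2^{45}$ (which you verify explicitly via the Schur complement and the identity $\tfrac12\sum_i a_{ii}b_{ii}+\sum_{i<j}a_{ij}b_{ij}=\tfrac12\mathrm{tr}(AB)=11$) and deduces (ii) from the same $3\times 3$ diagram, whose third column being short exact is exactly your snake-lemma conclusion $0\to\mathcal{T}^4(S^{[2]})\to T_4\to T_3\to 0$. Your write-up simply fills in the determinant computation that the paper leaves to the reader.
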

Statement (ii) was first obtained in \cite[Proposition 6.6]{bns}. Their method uses the isomorphism between $\HH^2(S^{[2]},\Z)$ and the lattice $U^{\oplus 3}\oplus E_8(-1)^{\oplus 2} \oplus \langle -2 \rangle$ and a discriminant computation.

In \cite[\S 4]{qin-wang}, the operators $L^\lambda$ were introduced to study the cohomology of Hilbert scheme
of $n$ points on a surface, where $\lambda$ is a partition of $n$. In our case, we will use the operator $L^{1,1}$
 which can be described explicitly as
\begin{equation}\label{eq L11}
 L^{1,1}(\fa) = \frac{1}{2}\pi_*(\tau^*(p_1^*\fa \cup p_2^*\fa) - j_*\eta^*\fa) \in \HH^4(F,\Z),
\end{equation}
for all $\fa\in\HH^2(S,\Z)$. The key point here is that $L^{1,1}$ is integral. When $\fa$ is the
class of a curve $C\subset S$, then $L^{1,1}(\fa)$ is represented by
the closure of $\{\pi(x,y):x,y\in C,x\neq y\}$. This implies that $L^{1,1}$ is integral on Hodge classes;
see \cite[Theorem 4.5]{qin-wang}. The above equation \eqref{eq L11} allows us to check the integrality of
 $L^{1,1}$ on a basis of $\HH^2(S,\Z)$. Thus we only need to check that for finitely many general elements
 $\fa$ of $\HH^2(S,\Z)$. For each of those $\fa$, by deforming the complex structure on $S$, we can arrange $\fa$
to be a Hodge class and hence we see that $L^{1,1}$ is integral on this $\fa$. It follows that $L^{1,1}$ is
integral on all of those finitely many $\fa$'s. Hence as an operator, $L^{1,1}$ is integral.

The following theorem will be very useful in explicit
calculations.
\begin{thm}\label{explicit basis}
Let $\{\fa_i\}$ be an integral basis of $\HH^2(S,\Z)$ and $A$ the
intersection matrix. Let $B=A^{-1}$. There is a basis
$$
\{v_0,\,\,v_i|_{1\leq i\leq 22},\,\, v_{ij}|_{1\leq i<j\leq 22},
\,\,u_i|_{1\leq i\leq 22}\}
$$
of $\HH^4(S^{[2]},\Z)$, such that
\begin{equation*}
 \pi^*v_0 =e_0,\quad \pi^*v_{i}= e_i-u'_i,\quad
 \pi^*v_{ij}=e_{ij},\quad \pi^*u_i=2u'_i.
\end{equation*}
The cup product $\HH^2(S^{[2]},\Z)\times\HH^2(S^{[2]},\Z)\to
\HH^4(S^{[2]},\Z)$ can be described explicitly as
\begin{align*}
\hat{\fa}_i\cdot\hat{\fa}_j &=v_{ij}+a_{ij}v_0,\quad 1\leq
i<j\leq22;\\
\hat{\fa}_i\cdot\hat{\fa}_i &=2v_i +u_i+a_{ii}v_0,\quad 1\leq
i\leq22;\\
\hat{\fa}_i\cdot\delta &=u_i,\quad 1\leq i\leq 22;\\
\delta\cdot\delta & =-\sum_{1\leq i<j\leq
22}b_{ij}v_{ij}-\sum_{1\leq i\leq 22}b_{ii}v_i -\sum_{1\leq i\leq
22}\frac{b_{ii}}{2}u_i-v_0.
\end{align*}
In particular, $\hat\fa_i(\hat\fa_i - \delta)$ is divisible by $2$ in $\HH^4(S^{[2]},\Z)$.
\end{thm}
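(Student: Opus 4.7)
The plan is to exhibit each of the four families of basis elements explicitly and then extract everything from the injectivity of $\pi^*$ and the push-pull relation $\pi^*\pi_* = 1 + \sigma^*$ on $\HH^*(Z,\Z)$. I would set $v_0 := \pi_*\tau^*p_1^*[pt]$, $v_{ij} := \pi_*\tau^*(p_1^*\fa_i \cup p_2^*\fa_j)$ for $1 \le i < j \le 22$, $u_i := \delta \cup \hat\fa_i$, and $v_i := L^{1,1}(\fa_i)$, which is integral by the discussion immediately preceding the theorem. The first two families and $u_i$ are manifestly integral. For the pullback formulas, combining $\pi^*\pi_* = 1 + \sigma^*$ with $\sigma^*p_k^* = p_{3-k}^*$ gives $\pi^*v_0 = e_0$ and $\pi^*v_{ij} = e_{ij}$; for $u_i$, one uses $\pi^*\delta = [\tilde\Delta]$ together with the projection formula and $j^*\tau^*(p_1^*\fa_i + p_2^*\fa_i) = 2\eta^*\fa_i$ to conclude $\pi^*u_i = 2u'_i$; and for $v_i$, the explicit formula for $L^{1,1}$ yields $\pi^*v_i = e_i - u'_i$ (again via the push-pull identity).

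Next I would argue that these $1 + 22 + 231 + 22 = 276$ classes form a $\Z$-basis. The count matches $\dim_\Q \HH^4(F,\Q) = 276$. By Corollary \ref{size of T3}(ii), $\pi^*\HH^4(S^{[2]},\Z)$ is a sublattice of index $2^{22}$ in $\HH^4(Z,\Z)^G$, which is freely generated by $\{e_0, e_i, e_{ij}, u'_i\}$ (Lemma \ref{group cohomology weight 4}(iii)). Relative to these bases, the pullbacks of the candidates form a block-triangular transition matrix with diagonal entries $1,\ldots,1,2,\ldots,2$ and determinant $\pm 2^{22}$. Thus the lattice generated by the pullbacks sits inside $\pi^*\HH^4(S^{[2]},\Z)$ with the same index in $\HH^4(Z,\Z)^G$, so the inclusion is an equality, and injectivity of $\pi^*$ promotes this to a basis statement upstairs.

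For the cup products I pull back to $Z$ and use the explicit expansions of products of divisors already carried out in the preamble (they are precisely the entries of $M_{\tilde\rho}$). For $\delta^2$, I invoke $(\pi^*\delta)^2 = [\tilde\Delta]^2 = -\tau^*[\Delta_S]$ via Lemma \ref{fundamental formla of blow up}, and expand $[\Delta_S]$ through the K\"unneth decomposition $[\Delta_S] = \sum_i \fa_i \otimes \fa_i^\vee + [pt]\otimes[S] + [S]\otimes[pt]$, which reduces everything to arithmetic with $a_{ij}$ and $b_{ij}$. The one subtlety is that the coefficient of $u_i$ in $\delta^2$ emerges as $b_{ii}/2$; this is an integer because the $K3$ intersection form is even unimodular, so $b_{ii} = \fa_i^\vee \cdot \fa_i^\vee \in 2\Z$, and the asserted formula is thus an honest identity in $\HH^4(S^{[2]},\Z)$. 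Finally, the product formulas give $\hat\fa_i(\hat\fa_i - \delta) = 2v_i + a_{ii}v_0$, and $a_{ii} = \fa_i \cdot \fa_i$ is even, so the divisibility by $2$ is immediate. The main technical obstacle I anticipate is the integrality of $v_i$, which rests on $L^{1,1}$ being an integral operator; the remaining steps are essentially bookkeeping once the pullback formulas are in place.
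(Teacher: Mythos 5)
Your proposal is correct and follows essentially the same route as the paper: the same candidate classes (your $u_i=\delta\cup\hat\fa_i$ coincides with the paper's $\pi_*u_i'$ since both pull back to $2u_i'$ and $\pi^*$ is injective), the same index-$2^{22}$ comparison against $\HH^4(Z,\Z)^G$ via Corollary \ref{size of T3}(ii), and the same verification of the cup products by pulling back along $\pi^*$. The only additions are points the paper leaves implicit, namely the evenness of $a_{ii}$ and $b_{ii}$ coming from the even unimodularity of the $K3$ lattice, which you correctly supply.
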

\begin{proof}
We take $u_i=j'_*\eta^*\fa_i=\pi_*u'_i$,
$v_0=\pi_*(\tau^*p_1^*[pt])$ and $v_{ij}=\pi_*\tau^*(p_1^*\fa_i\cup
p_2^*\fa_j)$. We also set $v_i=L^{1,1}(\fa_i)\in\HH^4(S^{[2]},\Z)$. We also have the
following relation
$$
e_i=\tau^*(p_1^*\fa_i\cup p_2^*\fa_i)=\pi^*L^{1,1}(\fa_i)+u'_i,\quad
1\leq i\leq 22.
$$
Let $N\subset\HH^4(S^{[2]},\Z)$ be the subgroup generated by $v_0$,
$\{v_i\}_{i=1}^{22}$, $\{v_{ij}\}_{1\leq i<j\leq 22}$,
$\{u_i\}_{i=1}^{22}$. By writing down explicitly the matrix of
$\pi^*:N\to\HH^4(Z,\Z)^G$, we see that the cokernel of the above map
has size $2^{22}$. By (ii) of Lemma \ref{size of T3}, this implies
that $N=\HH^4(S^{[2]},\Z)$. To check the formula of cup products, we
only need to check the identities after pulling back via $\pi^*$.
For example,
\begin{align*}
\pi^*(\hat\fa_i\cdot\hat\fa_j) &= \tau^*(p_1^*\fa_i+p_2^*\fa_i)\cdot
\tau^*(p_1^*\fa_j+p_2^*\fa_j)\\
&=\tau^*((\fa_i\cdot\fa_j)p_1^*[pt] +(\fa_j\cdot\fa_i)p_2^*[pt])
+\tau^*(p_1^*\fa_i\cup p_2^*\fa_j+ p_1^*\fa_j\cup p_2^*\fa_i)\\
&=\pi^*(a_{ij}v_0+v_{ij}),\quad 1\leq i<j\leq 22.
\end{align*}
The other equalities are checked similarly.
\end{proof}

\subsection{The general case} In this subsection, we assume that $F$ is a hyperk\"ahler manifold of $K3^{[2]}$-type.
For any $\delta\in\Omega(F)$, then $\delta$ (or $-\delta$) essentially arises from some $S^{[2]}$ in the
deformation equivalent family, see Lemma 3.4 of \cite{mm}. Hence for cohomological computations, it is harmless to automatically view an element $\delta\in\Omega(F)$ as
coming from an isomorphism $F\cong S^{[2]}$.

We first prove the following
\begin{lem}\label{lemma on change of delta}
Let $\delta,\delta'\in\Omega(F)$, then the following are true.\\
(i) The class $\delta-\delta'$ is divisible by $2$ in
$\HH^2(F,\Z)$.\\
(ii) The class $\delta^2-\delta'^2$ is divisible by $8$ in
$\HH^4(F,\Z)$. \\
(iii) For any $\alpha\in\HH^2(F,\Z)$, the class
$\alpha\cdot(\alpha-\delta)\in\HH^4(F,\Z)$ is divisible by $2$.
\end{lem}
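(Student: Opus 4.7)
The plan is to prove the three statements in the order (i), (iii), (ii), since (ii) falls out immediately from (i) together with (iii).

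For (i), I would work in the orthogonal decomposition $\HH^2(F,\Z) = \Z\delta \oplus \delta^\perp$ recorded in the remark, where $(\delta^\perp, b)$ is the unimodular $K3$ lattice. Writing $\delta' = a\delta + \beta$ with $a \in \Z$ and $\beta \in \delta^\perp$, the evenness of $\delta'$ tested against elements of $\delta^\perp$ gives $b(\beta, -) \subset 2\Z$ on $\delta^\perp$, which by unimodularity forces $\beta \in 2\delta^\perp$; write $\beta = 2\beta'$. Then $b(\delta',\delta') = -2$ unfolds to $a^2 - 1 = 2\, b(\beta', \beta')$, so $a$ is odd. Hence $\delta' - \delta = (a-1)\delta + 2\beta'$ lies in $2\,\HH^2(F,\Z)$.

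For (iii), following the paragraph preceding the lemma, I would fix an identification $F \cong S^{[2]}$ for which $\delta$ is the half-diagonal class, so that Theorem \ref{explicit basis} supplies the basis $\{\hat\fa_1,\dots,\hat\fa_{22},\delta\}$ of $\HH^2(F,\Z)$ together with explicit cup-product formulas. Writing $\alpha = \sum_i c_i \hat\fa_i + e\delta$, I would expand $\alpha(\alpha - \delta) = \alpha^2 - \alpha\cdot\delta$ using the relations $\hat\fa_i \cdot \delta = u_i$, $\hat\fa_i\hat\fa_j = v_{ij} + a_{ij} v_0$ for $i<j$, and $\hat\fa_i^2 = 2v_i + u_i + a_{ii} v_0$. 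After cancellation, the only terms not manifestly in $2\,\HH^4(F,\Z)$ are $\sum_i c_i^2 u_i - \sum_i c_i u_i = \sum_i c_i(c_i - 1) u_i$, the coefficient $\sum_i c_i^2 a_{ii}$ of $v_0$, and $(e^2 - e)\,\delta^2$. The first and third are even because $n(n-1) \in 2\Z$ for every integer $n$, and the middle one is even because $\HH^2(S,\Z)$ is an even lattice, so every diagonal entry $a_{ii}$ is even.

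For (ii), I would combine (i) and (iii). By (i), write $\delta - \delta' = 2\eta$ with $\eta \in \HH^2(F,\Z)$; then $\delta + \delta' = 2(\delta - \eta)$, so
\[
\delta^2 - \delta'^2 = (\delta - \delta')(\delta + \delta') = 4\,\eta(\delta - \eta).
\]
Applying (iii) with $\alpha = \eta$ shows that $\eta(\eta - \delta)$, and hence $\eta(\delta - \eta)$, lies in $2\,\HH^4(F,\Z)$, so $\delta^2 - \delta'^2 \in 8\,\HH^4(F,\Z)$. The main obstacle is the bookkeeping in (iii): the expansion of $\alpha^2 - \alpha\delta$ must be organized so that every leftover term visibly splits off a factor of $2$, which relies both on the elementary identity $n(n-1) \in 2\Z$ and on the evenness of the $K3$ lattice; the other two parts are then essentially formal consequences.
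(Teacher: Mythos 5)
Your proof is correct, and parts (i) and (iii) follow essentially the same route as the paper: (i) via the decomposition $\HH^2(F,\Z)=\Z\delta\oplus\delta^\perp$ and unimodularity of the $K3$ lattice (you deduce that $a$ is odd from $b(\delta',\delta')=-2$, the paper from primitivity of $\delta'$ --- both work), and (iii) by reducing to the half-diagonal class of an $S^{[2]}$ and expanding in the basis of Theorem \ref{explicit basis}, where the evenness of the $K3$ lattice and the identity $n(n-1)\in 2\Z$ do exactly the work you describe. Where you genuinely depart from the paper is in (ii): the paper proves (ii) \emph{before} (iii) by brute force, expanding $\delta'^2-\delta^2=4\hat\fa^2-4c\,\delta\hat\fa+(c^2-1)\delta^2$ and checking divisibility term by term using $8\mid c^2-1$ for odd $c$, a rearrangement involving $\binom{a_i}{2}$, and the divisibility of $\hat\fa_i^2-\delta\hat\fa_i$ by $2$ (i.e.\ the basis-element case of (iii)). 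Your reordering makes this cleaner: writing $\delta-\delta'=2\eta$ gives $\delta+\delta'=2(\delta-\eta)$, hence $\delta^2-\delta'^2=4\,\eta(\delta-\eta)$, and a single application of (iii) with $\alpha=\eta$ finishes it. The ingredients are identical, but your factorization exposes (ii) as a formal consequence of (i) and (iii) and eliminates the explicit coefficient bookkeeping; the only thing to keep in mind is that (iii) must then be proved independently of (ii), which your expansion indeed does.
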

\begin{proof}
We assume that $F=S^{[2]}$ for some $K3$-surface $S$. Let $\delta$
be the half of the boundary divisor. We use the notations in
Theorem \ref{explicit basis}. Then we can write
$\delta'=\hat\fa'-c\delta$, where $b(\hat\fa',\delta)=0$. Since
$\delta'$ is exceptional, we have $b(\delta',\alpha)\in 2\Z$ for all
$\alpha\in\HH^2(F,\Z)$. This forces $\hat\fa'=2\hat\fa$, for some
$\hat\fa=\sum a_i\hat\fa_i\in\delta^\perp$. Since $\delta'$ is
primitive, we know that $c\in\Z$ is odd. This proves (i). For (ii),
we write explicitly
\begin{align*}
\delta'^2-\delta^2
 &=4\hat\fa^2-4c\delta\hat\fa+c^2\delta^2\\
 &=4\sum_{i=1}^{22}(a_i^2\hat\fa_i^2-ca_i\,\delta\hat\fa_i) +8\sum_{1\leq i<j\leq 22}
 a_ia_j\hat\fa_i\hat\fa_j +(c^2-1)\delta^2.
\end{align*}
Since $c^2-1$ is divisible by 8, we only need to show that the first
sum is divisible by 8. This can be seen from
$$
4\sum_{i=1}^{22}(a_i^2 \hat{\fa}_i^2 - ca_i\,\delta\hat{\fa}_i) = 4\sum
a_i(\hat{\fa}_i^2-\delta\hat{\fa}_i) + 8\sum{a_i \choose 2}\hat{\fa}_i^2 -4(c-1)\sum a_i\delta \hat\fa_i
$$
and the fact that $c$ is odd and that $\hat\fa_i^2-\delta\hat\fa_i$ is divisible by 2,
see Theorem \ref{explicit basis}. Given (i), we only need to prove
(iii) for the exceptional element $\delta\in\Omega$ coming from an
isomorphism $F\cong S^{[2]}$. This case follows from Theorem
\ref{explicit basis}.
\end{proof}

\begin{defn}\label{defn of v_0 and v_delta}
Let $\delta\in\Omega(F)$. We define a quadratic map $v_\delta
:\HH^2(F,\Z)\to \HH^4(F,\Z)$ by putting
\begin{equation}\label{eq v delta}
v_\delta(\alpha)=\frac{1}{2}\alpha\cdot(\alpha-\delta).
\end{equation}
It is known that $\delta^\perp$ is the lattice of a $K3$ surface.
Let $\{\hat\fa_1,\ldots,\hat\fa_{22}\}$ be a basis for
$\delta^\perp$ and let $A=(b(\hat\fa_i,\hat\fa_j))_{22\times 22}$ be
the intersection matrix. Set $B=A^{-1}$. We define
\begin{equation}\label{eq v0}
v_0(\delta)=\frac{1}{10}(\delta\cdot\delta+\frac{1}{2}\sum_{i,j=1}^{22}b_{ij}\hat\fa_i\cdot\hat\fa_j).
\end{equation}
\end{defn}
\begin{rmk}
Let $F=S^{[2]}$ and notations be as in Theorem \ref{explicit basis},
then $v_0(\delta)$ is simply the class $v_0$. This can be deduced from a direct computation using the cup product formulas obtained in Theorem \ref{explicit basis}. In particular, $v_0(\delta)\in\HH^4(F,\Z)$ is integral.
\end{rmk}

\begin{thm}\label{explicit basis II}
Let $\delta\in\Omega(F)$ and $\{\hat\fa_1,\ldots,\hat\fa_{22}\}$ be a basis of $\delta^\perp$. Then the cohomology group $\HH^4(F,\Z)$ admits an integral
basis
$$
\{v_0(\delta),\quad\hat\fa_i\cdot\hat\fa_j|_{1\leq i\leq j\leq 22},\quad
v_\delta(\hat\fa_i)|_{1\leq i\leq
22},\quad\delta\cdot\hat\fa_i|_{1\leq i\leq22}\}.
$$
\end{thm}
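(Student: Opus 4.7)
The plan is to reduce to the case $F\cong S^{[2]}$ handled in Theorem \ref{explicit basis} and then show that the proposed family is obtained from the integral basis there by a unimodular change of basis.

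By the remark preceding the statement (invoking Lemma 3.4 of \cite{mm}), after possibly replacing $\delta$ by $-\delta$ I may fix an identification $F\cong S^{[2]}$ under which $\delta$ is the half-diagonal class. Then $\delta^\perp$ is canonically identified with $\HH^2(S,\Z)$, and the basis $\{\hat\fa_1,\ldots,\hat\fa_{22}\}$ of $\delta^\perp$ corresponds to an integral basis $\{\fa_1,\ldots,\fa_{22}\}$ of the $K3$ lattice. Theorem \ref{explicit basis} then supplies an integral basis $\{v_0,\,v_i,\,v_{ij},\,u_i\}$ of $\HH^4(F,\Z)$ together with explicit cup product formulas.

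Next I translate each element of the proposed family into the reference basis. The identities
\[
 \hat\fa_i\cdot\hat\fa_j = v_{ij} + a_{ij} v_0 \quad (i<j), \qquad \delta\cdot\hat\fa_i = u_i
\]
are read off directly from Theorem \ref{explicit basis}. For $v_\delta(\hat\fa_i) = \tfrac{1}{2}\hat\fa_i(\hat\fa_i-\delta)$, combining the formulas for $\hat\fa_i^2$ and $\delta\cdot\hat\fa_i$ and using that $a_{ii}=b(\fa_i,\fa_i)$ is even on the $K3$ lattice gives
\[
 v_\delta(\hat\fa_i) = v_i + \tfrac{a_{ii}}{2}\, v_0.
\]
For $v_0(\delta)$, I expand the definition: a direct computation using $\mathrm{tr}(BA) = 22$ yields
\[
 \tfrac{1}{2}\sum_{i,j} b_{ij}\,\hat\fa_i\hat\fa_j = \sum_i b_{ii} v_i + \tfrac{1}{2}\sum_i b_{ii} u_i + \sum_{i<j} b_{ij} v_{ij} + 11\, v_0,
\]
and adding the formula for $\delta^2$ from Theorem \ref{explicit basis} the $v_i$, $u_i$, and $v_{ij}$ contributions cancel, leaving $v_0(\delta) = \tfrac{1}{10}(10 v_0) = v_0$.

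These four identities show that the matrix expressing the old basis $\{v_0,v_i,v_{ij},u_i\}$ in terms of the new family is unipotent (diagonal entries all equal to $1$, off-diagonal entries concentrated in the single column corresponding to $v_0(\delta)$), hence unimodular over $\Z$. Consequently the proposed family is an integral basis of $\HH^4(F,\Z)$. The only nontrivial computation is the identity $v_0(\delta) = v_0$, which depends on the combinatorial fact $\mathrm{tr}(BA) = 22$; everything else is bookkeeping with the cup product formulas of Theorem \ref{explicit basis}.
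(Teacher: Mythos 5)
Your proof is correct and is precisely the argument the paper leaves implicit: reduce to $F\cong S^{[2]}$ via the deformation statement from \cite{mm}, then check that the proposed family differs from the basis $\{v_0,v_i,v_{ij},u_i\}$ of Theorem \ref{explicit basis} by a unipotent integral change of basis, the only nontrivial computation being $v_0(\delta)=v_0$ (which the paper relegates to the remark preceding the theorem). One small point worth recording: the count comes out to $276$ only if the index range is read as $1\leq i<j\leq 22$ --- the diagonal classes $\hat\fa_i^2=2v_\delta(\hat\fa_i)+\delta\cdot\hat\fa_i$ are integrally redundant --- and your translation tacitly (and correctly) makes this reading.
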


\section{A description of $\mathcal{T}^4(F)$}
Let $F$ be a hyperk\"ahler manifold of $K3^{[2]}$-type.
This section is devoted to a canonical description of the group
$\mathcal{T}^4(F)$. By Lemma \ref{size of torsions}(ii), we know that the order of the group $\mathcal{T}^4(F)$ is $5\cdot 2^{23}$.

\begin{lem}\label{lem v_delta}
 Let $F$ be a hyperk\"ahler manifold of $K3^{[2]}$-type and
$\delta\in\Omega(F)$ an exceptional class. Then\\
(i) The composition $\xymatrix{\HH^2(F,\Z)\ar[r]^{v_\delta}
&\HH^4(F,\Z)\ar[r] &\mathcal{T}^4(F)}$ induces a homomorphism
$$
\bar{v}_{\delta}:\HH^2(F,\Z)\otimes\Z/2\Z \rightarrow
\mathcal{T}^4(F).
$$
(ii) The homomorphism $\bar{v}_\delta$ is independent of the choice
of $\delta$.\\
(iii) The image $\bar{\delta}$ of $\delta$ in
$\HH^2(F,\Z)\otimes\Z/2\Z$ is independent of the choice of
$\delta$.\\
(iv) The kernel of $\bar{v}_\delta$ is generated by
$\bar{\delta}$.
\end{lem}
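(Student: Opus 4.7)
The plan is to handle parts (i)--(iii) by direct algebraic manipulation using Lemma \ref{lemma on change of delta}, then reduce (iv) to the case $F=S^{[2]}$ and read off the kernel from the explicit basis of Theorem \ref{explicit basis}.

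For (i), I would expand
$$
v_\delta(\alpha+\beta)-v_\delta(\alpha)-v_\delta(\beta)=\alpha\cdot\beta\in\rho(\Sym^2\Lambda),
$$
so $v_\delta$ is additive modulo $\rho(\Sym^2\Lambda)$; moreover $v_\delta(2\alpha)=2\alpha^2-\alpha\cdot\delta\in\rho(\Sym^2\Lambda)$, so the induced map factors through $\Lambda\otimes\Z/2\Z$. For (ii), a direct computation gives
$$
v_\delta(\alpha)-v_{\delta'}(\alpha)=\tfrac{1}{2}\alpha\cdot(\delta'-\delta)=\alpha\cdot\tfrac{\delta'-\delta}{2},
$$
and the last expression lies in $\rho(\Sym^2\Lambda)$ because Lemma \ref{lemma on change of delta}(i) says $(\delta-\delta')/2\in\Lambda$. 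Statement (iii) is a direct restatement of Lemma \ref{lemma on change of delta}(i).

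For (iv), I would first reduce to the case $F=S^{[2]}$ with $\delta$ the half of the boundary divisor, using the fact (as noted at the start of \S2.2) that any $\delta\in\Omega(F)$ comes from such an isomorphism. Keeping the notation of Theorem \ref{explicit basis}, set
$$
w_i:=v_\delta(\hat\fa_i)=\tfrac{1}{2}(\hat\fa_i^2-\hat\fa_i\delta)=v_i+\tfrac{a_{ii}}{2}v_0
$$
(the $a_{ii}/2$ is integral since the K3 lattice is even). Then $\bar v_\delta(\bar\delta)=0$ tautologically and $\bar v_\delta(\bar{\hat\fa}_i)=w_i$ in $\mathcal{T}^4(F)$, so I only need to determine which $\Z/2$-linear combinations of the $w_i$ vanish in $\mathcal{T}^4(F)$.

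To do this I would present $\mathcal{T}^4(F)$ directly. The relations $\hat\fa_i\cdot\hat\fa_j\in\rho$ and $\hat\fa_i\cdot\delta\in\rho$ show that in $\mathcal{T}^4(F)$ one has $u_i\equiv 0$, $v_{ij}\equiv -a_{ij}v_0$, and $2w_i\equiv 0$. Substituting these into the $\delta^2$-relation of Theorem \ref{explicit basis} and using $\operatorname{tr}(AB)=\operatorname{tr}(I_{22})=22$, the sole remaining relation collapses to
$$
10v_0=\sum_{i=1}^{22}b_{ii}w_i.
$$
So $\mathcal{T}^4(F)$ is presented by $v_0,w_1,\dots,w_{22}$ with these relations. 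Given $\sum c_iw_i=0$ in $\mathcal{T}^4(F)$ with $c_i\in\{0,1\}$, lift to the free module: there must exist $m_0,m_i\in\Z$ with $\sum c_iw_i=m_0\bigl(10v_0-\sum b_{ii}w_i\bigr)+\sum 2m_iw_i$. Comparing the $v_0$-coefficient forces $10m_0=0$, hence $m_0=0$; then $c_i=2m_i$, so every $c_i$ is even and thus zero in $\Z/2\Z$. This shows $\ker\bar v_\delta=\{0,\bar\delta\}$.

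The main obstacle I expect is step (iv): carrying out the reduction of the relations in $\mathcal{T}^4(F)$ accurately, in particular deriving the clean form $10v_0=\sum b_{ii}w_i$ from the rather cluttered $\delta^2$-formula, and keeping track of integrality issues (evenness of $a_{ii}$, $b_{ii}$) coming from the K3 lattice structure on $\delta^\perp$.
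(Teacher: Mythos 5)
Your proof is correct. Parts (i)--(iii) are exactly the paper's argument: additivity of $v_\delta$ modulo $\rho(\Sym^2\Lambda)$ via the cross term $\alpha\cdot\beta$, vanishing on $2\HH^2(F,\Z)$, and the identity $v_\delta(\alpha)-v_{\delta'}(\alpha)=\alpha\cdot\frac{\delta'-\delta}{2}$ combined with Lemma \ref{lemma on change of delta}(i). Where you diverge is (iv). The paper argues indirectly: it asserts that Theorem \ref{explicit basis} gives $\bar v_\delta(\hat\fa_i)\neq 0$ for the basis elements, observes that any nonzero class of $\delta^\perp\otimes\Z/2\Z$ is represented by a primitive vector which can be taken as part of a basis, and concludes $\Z/2\Z$-linear independence of the $\bar v_\delta(\hat\fa_i)$ from the nonvanishing on all primitive classes. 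You instead extract a full presentation of $\mathcal{T}^4(F)$ on the generators $v_0,w_1,\dots,w_{22}$ with relations $2w_i=0$ and $10v_0=\sum b_{ii}w_i$ (your reduction of the $\delta^2$-relation using $\operatorname{tr}(AB)=22$ checks out, and the resulting order $10\cdot 2^{22}=5\cdot 2^{23}$ agrees with Lemma \ref{size of torsions}), and then read off the kernel by lifting to the free module. Your route is longer but buys two things: it makes explicit the step the paper leaves unjustified (why $w_i=v_i+\frac{a_{ii}}{2}v_0$ is nonzero modulo $\rho(\Sym^2\Lambda)$ is not immediate from Theorem \ref{explicit basis} alone), and it essentially pre-computes the structure of $\mathcal{T}^4(F)$ that the paper only establishes later in Theorem \ref{thm on T^4}. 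The one point to flag is the reduction to $F=S^{[2]}$: the cited deformation result produces $\delta$ or $-\delta$ as the boundary class, but since $\bar v_\delta=\bar v_{-\delta}$ by your part (ii), this causes no harm.
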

\begin{rmk}
Since $\bar{v}_\delta$ is independent of the choice of $\delta$, we
will simply write
$$
\bar{v}:\HH^2(F,\Z/2\Z)\to\mathcal{T}^4(F)
$$
for this canonical homomorphism.
\end{rmk}
\begin{proof}
 For (i), we note that
\begin{align*}
v_\delta(\hat\fa+\hat\fa')
 &=\frac{1}{2}(\hat\fa+\hat\fa')(\hat\fa+\hat\fa'-\delta)\\
 &=v_\delta(\hat\fa) +v_\delta(\hat\fa')+ \hat\fa\hat\fa'
\end{align*}
which implies that the map $\HH^2(F,\Z)\to\mathcal{T}^4(F)$ is a
homomorphism. One easily checks that this homomorphism vanishes on
$2\HH^2(F,\Z)$ and hence (i) follows. If we pick another
exceptional class $\delta'\in\Omega(F)$, then
$$
v_{\delta'}(\hat\fa)-v_{\delta}(\hat\fa)
=\frac{1}{2}(\delta-\delta')\hat\fa.
$$
By Lemma \ref{lemma on change of delta}, the element
$\frac{\delta-\delta'}{2}$ is integral. Thus $v_{\delta'}(\hat\fa)$
and $v_\delta(\hat\fa)$ map to the same image in $\mathcal{T}^4(F)$,
which proves (ii). Statement (iii) follows from the fact that
$\delta-\delta'$ is divisible by 2; see Lemma \ref{lemma on change
of delta}. By the definition of $\bar{v}_\delta$, we see that
$\bar{\delta}$ is in the kernel. Let
$\{\hat\fa_1,\ldots,\hat\fa_{22}\}$ be a basis of $\delta^\perp$.
Then Theorem \ref{explicit basis} implies that
$\bar{v}_{\delta}(\hat\fa_i)\neq 0$ for all $i$. Since the basis is
arbitrary, we get that $\bar{v}_{\delta}(\hat\fa)\neq 0$ for all
primitive $\hat\fa\in\delta^\perp$. This implies that the images
$\bar{v}_\delta(\hat\fa_i)$ are $\Z/2\Z$-linearly independent. This
proves (iv).
\end{proof}

\begin{lem}\label{lem v_0}
Let $F$ be a hyperk\"ahler manifold of $K3^{[2]}$-type and
$\delta\in\Omega(F)$ an exceptional class. Then the following are true.\\
(i) $v_0(\delta)\in\HH^4(F,\Z)$ is the unique element satisfying the
following
$$
(v_0(\delta)\cdot\alpha\cdot\beta)_F=b(\alpha,\beta)+\frac{1}{4}b(\delta,\alpha)b(\delta,\beta),
\quad \forall\alpha,\beta\in\HH^2(F,\Z).
$$
(ii) The image $\bar{v}_0(\delta)$ of $v_0(\delta)$ in
$\mathcal{T}^4(F)$ is an element of order $10$. For a different
choice $\delta'\in\Omega(F)$, the difference
$\bar{v}_0(\delta)-\bar{v}_0(\delta')$ is an element in the image of
$\bar{v}_{\delta}$. In particular, the element $\bar{w}_0=2\bar{v}_0(\delta)$ is independent of the choice of $\delta$.
\end{lem}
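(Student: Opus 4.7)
For (i), the plan is to apply the Fujiki relation (Lemma \ref{lem sym2}(iii)) term-by-term to the defining expression $v_0(\delta)=\frac{1}{10}(\delta^2+\frac{1}{2}\sum b_{ij}\hat\fa_i\hat\fa_j)$. Decomposing arbitrary $\alpha,\beta\in\HH^2(F,\Z)$ along the splitting $\HH^2(F,\Z)=\Z\delta\oplus\delta^\perp$ as $\alpha=c\delta+\alpha_\perp$, $\beta=c'\delta+\beta_\perp$, the $\delta^2$-contribution produces the term $\frac{1}{4}b(\delta,\alpha)b(\delta,\beta)$, while the contribution of the $\delta^\perp$-sum is handled by the key identity
$$
\sum_{i,j=1}^{22}b_{ij}\,b(\hat\fa_i,\alpha_\perp)\,b(\hat\fa_j,\beta_\perp)=b(\alpha_\perp,\beta_\perp),
$$
which is immediate from $B=A^{-1}$ applied to any two vectors of $\delta^\perp$. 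Collecting terms recovers the claimed formula. Uniqueness follows from Poincar\'e duality: since Lemma \ref{lem sym2}(i) gives $\rho\otimes\Q\colon\Sym^2\Lambda\otimes\Q\cong\HH^4(F,\Q)$, any class is determined by its cup-product pairing against $\Sym^2\Lambda$.

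For the order-bound in (ii), one has $10\,v_0(\delta)=\delta^2+\frac{1}{2}\sum b_{ij}\hat\fa_i\hat\fa_j$. The first summand lies in $\Sym^2\Lambda$. For the second, rewrite it as $\frac{1}{2}\sum_i b_{ii}\hat\fa_i^2+\sum_{i<j}b_{ij}\hat\fa_i\hat\fa_j$; since $\delta^\perp$ is the even unimodular K3 lattice, the dual basis lies in $\delta^\perp$ itself and the Gram matrix $B=A^{-1}$ of the dual basis is also even, so $b_{ii}\in 2\Z$ and the whole expression is integral in $\Sym^2\Lambda$. Hence $10\bar v_0(\delta)=0$. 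To establish the exact order, reduce to $F=S^{[2]}$ by topological invariance and use Theorem \ref{explicit basis}. Eliminating $u_i$ via $\hat\fa_i\delta\mapsto u_i$ and substituting $v_{ij}\equiv -a_{ij}v_0$ from $\hat\fa_i\hat\fa_j\mapsto v_{ij}+a_{ij}v_0$, the $\delta^2$-relation can be row-reduced using the trace identity $\mathrm{tr}(AB)=22$ to produce the single dependence $20v_0\equiv 0$ alongside relations $2v_i\equiv -a_{ii}v_0$. Combined with the order count $|\mathcal T^4(S^{[2]})|=5\cdot 2^{23}$ from Lemma \ref{size of torsions}, the Smith normal form gives $\mathcal T^4(S^{[2]})\cong(\Z/2)^{22}\oplus\Z/10$, with $\bar v_0$ generating the $\Z/10$ summand, so $\bar v_0(\delta)$ has exact order 10.

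For the independence statement, write $\delta'=\delta+2\gamma$ with $\gamma\in\HH^2(F,\Z)$ (Lemma \ref{lemma on change of delta}(i)). The constraint $b(\delta,\delta)=b(\delta',\delta')=-2$ forces $b(\gamma,\gamma)+b(\delta,\gamma)=0$. Using (i), compute directly
$$
2(v_0(\delta)-v_0(\delta'))\cdot\alpha\beta=\tfrac{1}{2}\bigl[b(\delta,\alpha)b(\delta,\beta)-b(\delta',\alpha)b(\delta',\beta)\bigr],
$$
and expand using $\delta'=\delta+2\gamma$; by Fujiki the right-hand side matches the cup-pairing of $-\gamma(\gamma+\delta)$ exactly after applying the cancellation $b(\gamma,\gamma)+b(\delta,\gamma)=0$. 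By Poincar\'e duality,
$$
2(v_0(\delta)-v_0(\delta'))=-\gamma(\gamma+\delta)\in\Sym^2\Lambda.
$$
Now $\gamma(\gamma+\delta)=\gamma(\gamma-\delta)+2\gamma\delta$ is divisible by 2 in $\HH^4(F,\Z)$ by Lemma \ref{lemma on change of delta}(iii), so $v_0(\delta)-v_0(\delta')=-\frac{1}{2}\gamma(\gamma+\delta)=-v_\delta(\gamma+\delta)$ is already integral. Passing to $\mathcal T^4(F)$ and using that $\bar v_\delta$ is a homomorphism (Lemma \ref{lem v_delta}(i)) with $v_\delta(\delta)=0$, one obtains
$$
\bar v_0(\delta)-\bar v_0(\delta')=-\bar v_\delta(\gamma+\delta)=-\bar v_\delta(\gamma)\in\mathrm{Im}(\bar v_\delta).
$$
Since $\mathrm{Im}(\bar v_\delta)$ is 2-torsion, doubling yields $2\bar v_0(\delta)=2\bar v_0(\delta')$, so $\bar w_0$ is well-defined. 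The main obstacle is the exact-order calculation, where both the evenness of $A^{-1}$ for the K3 lattice and the trace identity $\mathrm{tr}(AB)=22$ must enter precisely; the comparison formula $2(v_0(\delta)-v_0(\delta'))=-\gamma(\gamma+\delta)$ is the technical heart of the second half.
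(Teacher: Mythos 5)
Your proposal is correct, and in two places it takes a genuinely different route from the paper. For part (i), the paper does not compute with the Fujiki relation at all: it reduces to $F=S^{[2]}$, identifies $v_0(\delta)$ with the class of the surface $\tilde S\subset S^{[2]}$ of subschemes through a fixed point (the blow-up of $S$ at that point), and reads off the pairings $(v_0\cdot\hat\fa_i\cdot\hat\fa_j)_F=(\fa_i\cdot\fa_j)$, $(v_0\cdot\hat\fa_i\cdot\delta)_F=0$, $(v_0\cdot\delta^2)_F=E^2=-1$ geometrically; your purely lattice-theoretic computation via Lemma \ref{lem sym2}(iii) and $B=A^{-1}$ avoids both the specialization to $S^{[2]}$ and the identification of $v_0(\delta)$ with $[\tilde S]$, at the cost of some bookkeeping (note the $\tfrac14 b(\delta,\alpha)b(\delta,\beta)$ term only appears after combining the $\delta^2$-contribution with the cross term $b(\alpha_\perp,\beta_\perp)=b(\alpha,\beta)+\tfrac12 b(\delta,\alpha)b(\delta,\beta)$, but the totals do match). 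For the $\delta$-independence in (ii), the paper writes $v_0(\delta)=\tfrac18\delta^2+\tfrac1{20}q$ with $q$ manifestly independent of $\delta$, reduces the difference to $\tfrac18(\delta'^2-\delta^2)$, and expands $\delta'=2\hat\fa+c\delta$ explicitly; your route through the pairing characterization of (i), Poincar\'e duality, and the identity $v_0(\delta)-v_0(\delta')=-v_\delta(\gamma+\delta)$ for $\delta'=\delta+2\gamma$ is equivalent (your $\gamma$ is $\hat\fa+\tfrac{c-1}{2}\delta$ and $\bar v_\delta(\delta)=0$, so both land on $\bar v_\delta(\hat\fa)$) and is arguably more intrinsic. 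Your treatment of the exact order $10$ is more detailed than the paper's one-line assertion and is the right argument; the only blemish is the claim that the $\delta^2$-relation row-reduces to $20v_0\equiv 0$ — the correct coefficient is $\tfrac12\mathrm{tr}(AB)-1=10$, giving $10v_0\equiv 0$ directly, so that the presentation $\langle v_0,v_i\mid 2v_i+a_{ii}v_0,\ 10v_0\rangle$ has determinant $5\cdot 2^{23}$ matching Lemma \ref{size of torsions}(ii) and yields $(\Z/2)^{22}\oplus\Z/10$ with $\bar v_0$ of exact order $10$. This is a harmless arithmetic slip (your independent observation that $10\bar v_0=0$, which relies correctly on the evenness of $B=A^{-1}$ for the unimodular K3 lattice, already closes the gap), but it should be fixed.
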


\begin{proof}
We take a basis $\{\hat\fa_1,\ldots,\hat\fa_{22}\}$ of $\delta^{\perp}$. To prove (i), we may work in the special case $F=S^{[2]}$.
Then the $v_0$ in Theorem \ref{explicit basis} is represented by a smooth surface $\tilde{S}\subset
F$, which parameterizes all length two subschemes of $S$ containing a given point of $S$. Note that $\tilde{S}$ is the blow up of $S$ at that point. Then
we have
$$
\hat\fa_i|_{\tilde{S}}=\sigma^*\fa_i,\qquad \delta|_{\tilde{S}}=E
$$
where $\sigma:\tilde{S}\to S$ is the blow up with $E$ being the
exceptional curve. Since the class of $\tilde{S}$ on $F$ is simply $v_0$, we have
$$
(v_0\cdot\hat\fa_i\cdot\delta)_F=(\sigma^*\fa_i\cdot
E)_{\tilde{S}}=0, \qquad (v_0\cdot\delta^2)_F=E^2=-1,
$$
and
$$
(v_0\cdot\hat\fa_i\cdot\hat\fa_j)_F=(\sigma^*\fa_i\cdot\sigma^*\fa_j)_{\tilde{S}}
=(\fa_i\cdot\fa_j)=b(\hat\fa_i,\hat\fa_j).
$$
One easily checks that the equality in (i) holds for $\alpha,\beta$
from the basis $\{\hat\fa_1,\ldots,\hat\fa_{22},\delta\}$. By
linearity, (i) holds for all $\alpha,\beta$. For (ii), we may still assume $F=S^{[2]}$. First we note that by the
definition of $v_0(\delta)$, we know that its image in
$\mathcal{T}^4(F)$ is an element of order 10; see equation \eqref{eq v0}. In $\HH^4(F,\Z)$, we
have the element
$$
q=\sum_{1\leq i, j\leq 22}b_{ij}\hat\fa_i\hat\fa_j
-\frac{1}{2}\delta^2
$$
where $B=(b_{ij})$ is the inverse of the intersection matrix of
$\{\fa_1,\ldots,\fa_{22}\}$. We know that $q$ is independent of the
choice of the exceptional class $\delta$ and the basis
$\{\hat\fa_i\}$ of $\delta^\perp$, see the discussion of next
section. Then equation \eqref{eq v0} can be written as
$$
v_0(\delta)=\frac{1}{8}\delta^2+\frac{1}{20}q.
$$
This implies that
$$
v_0(\delta')-v_0(\delta)=\frac{1}{8}(\delta'^2-\delta^2).
$$
As in the proof of Lemma \ref{lemma on change of delta}, we can
write $\delta'=2\hat\fa+c\delta$ for some $\hat\fa\in \delta^\perp$
and odd integer $c$. Then by direct computation, we get
$$
\bar{v}_0(\delta')-\bar{v}_0(\delta)=\bar{v}_\delta(\hat\fa).
$$
This finishes the proof.
\end{proof}

\begin{thm}\label{thm on T^4}
Let $F$ be a hyperk\"ahler manifold of $K3^{[2]}$-type. Let $\bar{\delta}\in \HH^2(F,\Z/2\Z)$ be the canonical element as in Lemma \ref{lem v_0}(iii) and $\mathcal{K}^2(F)$ be the quotient
of $\HH^2(F,\Z/2\Z)$ by $(\Z/2\Z)\bar{\delta}$. Then the following are true.

(i) There is a
canonical short exact sequence
$$
\xymatrix{0\ar[r] &\mathcal{K}^2(F)\ar[r] &\mathcal{T}^4(F)\ar[r]
&\Z/10\Z\ar[r] &0}.
$$
Each $\delta\in\Omega(F)$ determines a splitting of the above
sequence by the homomorphism $\Z/10\Z\to\mathcal{T}^4(F)$, $1\mapsto
\bar{v}_0(\delta)$.

(ii) There is a canonical exact sequence
$$
 \xymatrix@C=0.5cm{
   0 \ar[r] & \Z/5\Z \ar[rr]^{\varphi} &&\mathcal{T}^4(F) \ar[rr]^{\psi\qquad} && \HH^2(F,\Z)^\vee\otimes\Z/2\Z \ar[r] & 0 }
$$
where $\varphi(1)=\bar{w}_0$ and
$\psi(\theta)=\{\alpha\mapsto(\alpha\cdot\tilde\theta\cdot\delta)_F\mod
2\}$, where $\tilde\theta\in\HH^4(F,\Z)$ is a lifting of
$\theta\in\mathcal{T}^4(F)$ and $\delta\in\Omega(F)$ is some
exceptional class. The composition $\psi\circ\bar{v}_\delta$ is the
homomorphism induced by the Beauville--Bogomolov pairing.
\end{thm}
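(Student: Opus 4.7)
The proof splits into two parts, linked through the auxiliary map $\psi$ of (ii). For (i), Lemma~\ref{lem v_delta}(iv) gives an injection $\mathcal{K}^2(F)\hookrightarrow\mathcal{T}^4(F)$ via $\bar v$ whose image is $2$-torsion of order $2^{22}$; since $|\mathcal{T}^4(F)|=5\cdot 2^{23}$ by Lemma~\ref{size of torsions}(ii), the quotient has order $10$, and by Lemma~\ref{lem v_0}(ii) the element $\bar v_0(\delta)$ has order exactly $10$ in $\mathcal{T}^4(F)$. Thus the splitting claim reduces to checking $\langle\bar v_0(\delta)\rangle\cap\mathrm{Im}(\bar v)=0$, equivalently $5\bar v_0(\delta)\notin\mathrm{Im}(\bar v)$; I postpone this and deduce it from part (ii).

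Turning to (ii), define $\psi$ by $\theta\mapsto(\alpha\mapsto(\alpha\cdot\tilde\theta\cdot\delta)_F\bmod 2)$. For well-definedness of the lift: if $\tilde\theta$ is shifted by $\alpha_1\alpha_2\in\Sym^2\Lambda$, the Fujiki relation Lemma~\ref{lem sym2}(iii) expresses the change as a sum of three products, each containing a factor $b(-,\delta)\in 2\Z$ since $\delta$ is exceptional; independence of the choice of $\delta$ follows because $\delta-\delta'\in 2\HH^2(F,\Z)$ by Lemma~\ref{lemma on change of delta}(i). The central calculation is $\psi\circ\bar v_\delta$: using the lift $\tfrac12\hat\fa(\hat\fa-\delta)$ and expanding via Lemma~\ref{lem sym2}(iii), one obtains
\[
\psi(\bar v_\delta(\hat\fa))(\alpha)\equiv b(\alpha,\hat\fa)+\tfrac{b(\alpha,\delta)}{2}\,b(\hat\fa,\hat\fa)\pmod 2.
\]
Here the $K3^{[2]}$-lattice $\HH^2(F,\Z)=\Z\delta\oplus\delta^\perp$ is even (both $b(\delta,\delta)=-2$ and the $K3$-lattice $\delta^\perp$ are even), so $b(\hat\fa,\hat\fa)\in 2\Z$; combined with $b(\alpha,\delta)\in 2\Z$, the extra term vanishes mod $2$, and $\psi\circ\bar v_\delta$ is precisely the Beauville--Bogomolov map mod $2$. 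A parallel computation using Lemma~\ref{lem v_0}(i) shows $\psi(\bar v_0(\delta))(\alpha)\equiv b(\alpha,\delta)/2\pmod 2$, a $2$-torsion element that is nonzero (it takes value $1$ on $\delta$); hence $\bar w_0=2\bar v_0(\delta)\in\ker\psi$, so $\varphi$ lands in $\ker\psi$.

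To conclude, the image of BB mod $2$ has order $2^{22}$ since its kernel is $\langle\bar\delta\rangle$ (from $b(\delta,-)\in 2\Z$ together with unimodularity of $\delta^\perp$). The functional $\psi(\bar v_0(\delta))$ takes value $1$ on $\delta$ while every functional in the image of BB mod $2$ vanishes on $\delta$; thus $\mathrm{Im}(\psi)$ properly contains an index-$2$ subgroup, i.e.\ equals all of $\HH^2(F,\Z)^\vee\otimes\Z/2\Z$ of order $2^{23}$. A size count then forces $|\ker\psi|=5$; together with $\varphi$ injective ($\bar w_0$ has order $5$) and $\mathrm{Im}(\varphi)\subseteq\ker\psi$, this gives (ii). Looping back to (i): $\ker\psi$ is $5$-torsion and $\mathrm{Im}(\bar v)$ is $2$-torsion, so they meet trivially and $\psi$ is injective on $\mathrm{Im}(\bar v)$; applying $\psi$ to $5\bar v_0(\delta)$ yields the nonzero $\psi(\bar v_0(\delta))$, which is not in the image of BB mod $2=\psi(\mathrm{Im}(\bar v))$, forcing $5\bar v_0(\delta)\notin\mathrm{Im}(\bar v)$ and completing the splitting claim. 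The main obstacle is the Fujiki calculation of $\psi\circ\bar v_\delta$, which collapses cleanly to BB mod $2$ only because $\HH^2(F,\Z)$ is an even lattice — a small but essential parity observation.
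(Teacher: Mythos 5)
Your proof is correct and takes essentially the same route as the paper: the order count $|\mathcal{T}^4(F)|=5\cdot 2^{23}$ against $|\mathcal{K}^2(F)|=2^{22}$ for (i), and for (ii) the Fujiki-relation computation showing $\psi\circ\bar{v}_\delta\equiv b(-,-)\bmod 2$ together with surjectivity of $\psi$ obtained from $(v_0(\delta)\cdot\delta^2)_F=-1$. The only divergence is that the paper establishes the splitting in (i) via the explicit basis of Theorem \ref{explicit basis II} (the images of $v_0(\delta)$ and the $v_\delta(\hat\fa_i)$ generate $\mathcal{T}^4(F)$, so $\bar{v}_0(\delta)$ visibly generates the order-$10$ quotient), whereas you deduce $5\bar{v}_0(\delta)\notin\mathrm{Im}(\bar{v})$ from the functional $\psi(\bar{v}_0(\delta))$ taking the value $1$ on $\delta$ --- an equally valid shortcut.
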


\begin{proof}
To prove (i), we note that by Lemma \ref{lem v_0} $\mathcal{K}^2(F)$ is identified with
the image of $\bar{v}$. Since $\mathcal{K}^2(F)$ is of order
$2^{22}$ and $\mathcal{T}^4(F)$ is of order $5\cdot 2^{23}$, we know
that the quotient group $\mathcal{T}^4(F)/\mathcal{K}^2(F)$ is of
order 10 and thus isomorphic to $\Z/10\Z$. This gives the short
exact sequence. By the explicit basis obtained in Theorem
\ref{explicit basis II}, we know that the images of $v_0$ and
$\{v_\delta(\hat\fa_i)\}$ generate $\mathcal{T}^4(F)$. This implies
that the element $\bar{v}_0$, which is of order $10$ by Lemma \ref{lem v_0}, gives a splitting of the above short
exact sequence.

To prove (ii), we first check that the definition of $\psi$ is
independent of the choice of $\delta$. The composition
$\psi\circ\bar{v}_\delta$ is induced by
$$
\alpha\mapsto\{\beta\mapsto(\frac{1}{2}\alpha(\alpha-\delta)\cdot\beta\cdot\delta)_F\mod
2\}.
$$
Then we explicitly compute that
$$(\frac{1}{2}\alpha(\alpha-\delta)\cdot\beta\cdot\delta)_F\equiv b(\alpha,\beta)\mod 2.$$
This implies that $\psi\circ\bar{v}_\delta$ is the homomorphism
induced by $b(-,-)$ whose image is
$$
 \{\ell\in\Hom(\HH^2(F,\Z),\Z/2\Z):\ell(\delta)=0,\forall\delta\in\Omega(F)\}.
$$
To show that $\psi$ is surjective, we only need to show that there
exists an element $\theta\in\HH^4(F,\Z)$ such that
$\theta\cdot\delta\cdot\delta\equiv 1\mod 2$. But we can simply take
$\theta=v_0(\delta)$ since $v_0(\delta)\cdot\delta^2=-1$. The
remaining part of (ii) follows from this.
\end{proof}

\section{Minimal Hodge classes and Picard rank}
Let $F$ be a hyperk\"ahler manifold of $K3^{[2]}$-type. In this section we show that the minimal Hodge classes are of special form. Namely it is always contained in the subspace generated by divisors and the Beauville--Bogomolov form. In particular, under Assumption \ref{technical assumption} we show that being of Jacobian type will force the Picard rank of $F$ to jump. As a consequence we see that a very general $F$ is not of Jacobian type. We also show that having a minimal Hodge class is a birational invariant.

\subsection{Some canonical Hodge classes}
Let $(F,\lambda_0)$ be a primitively polarized hyperk\"ahler manifold of
$K3^{[2]}$-type. In this section, we study some canonical element in the group
$$
\mathrm{Hdg}^4(F)=\HH^4(F,\Z)\cap\HH^{2,2}(F)
$$
of Hodge classes in degree 4. First we note that there are two
canonical classes in $\mathrm{Hdg}^4(F)$. The first one is
$(\lambda_0)^2$ and the second one is constructed from the
Beauville--Bogomolov form via linear algebra.

Let $\Lambda$ be a finitely geneated free abelian group with a nondegenerate integral symmetric bilinear form $b:\Lambda \times \Lambda\rightarrow \Z$. Let $\{\fa_1,\ldots,\fa_r\}\subset\Lambda$ be an integral basis of $\Lambda$. Let $A=(a_{ij})_{r\times r}$ be the matrix representing the bilinear form $b$ with respect to the above basis. Namely $a_{ij}=b(\fa_i,\fa_j)$. Let $B=(b_{ij})=A^{-1}$ and define
\begin{equation}\label{eq def of b}
 b^{-1} = \sum_{i,j=1}^{r} b_{ij}\fa_i \fa_j \in \Sym^2(\Lambda)\otimes\Q.
\end{equation}
Note that $B$ is usually not integral unless the bilinear form $b$ is unimodular. Using standard linear algebra, one checks that $b^{-1}$ is independent of the choice of the basis $\{\fa_i\}$. Actually, we can choose $\{\fa_i\}$ to be a basis of $\Lambda_{\Q}=\Lambda\otimes\Q$ (resp. $\Lambda_{\C}=\Lambda\otimes\C$) and extend $b$ linearly to $\Lambda_\Q$ (resp. $\Lambda_\C$), then the expression \eqref{eq def of b} gives the same element $b^{-1}$.

Back to our specific situation, let $b(-,-)$ be the
Beauville--Bogomolov bilinear form on $\HH^2(F,\Z)$. The above construction gives an element $b^{-1}\in \Sym^2(\HH^2(F,\Q))$. By Lemma \ref{lem sym2}, this gives rise to an element $q\in\HH^4(F,\Q)$. We will frequently use the following explicit expression for $q$. Take an exceptional element $\delta\in\Omega(F)$ and let
$$
\{\hat\fa_1,\hat\fa_2,\ldots,\hat\fa_{22}\}
$$
be a basis of $\delta^\perp$. Let $A=(a_{ij})$ be the matrix of
$b$ restricted to $\delta^\perp$, i.e. $a_{ij}=b(\hat\fa_i,\hat\fa_j)$. Let $B=(b_{ij})=A^{-1}$. Then we have
\begin{equation}\label{eq q}
q=\sum_{i,j}b_{ij}\hat\fa_i\hat\fa_j - \frac{1}{2}\delta^2\in\HH^4(F,\Q).
\end{equation}

\begin{defn}\label{defn V lambda}
 Let $V_{\lambda_0}\subset \mathrm{Hdg}^4(F)$ be the subgroup defined by
$$
V_{\lambda_0}= \mathrm{Span}_{\Q}\{\lambda_0^2, q\}\cap \HH^4(F,\Z).
$$
\end{defn}

Recall that $\lambda_0$ is even if $b(\lambda_0,\alpha)$ is even for all $\alpha\in\HH^2(F,\Z)$.

\begin{lem}\label{lemma on even classes}
Let $\lambda_0\in\HH^2(F,\Z)$ be primitive. The following statements are equivalent.\\
(i) The element $\lambda_0$ is even.\\
(ii) $\lambda_0^2-\delta^2$ is divisible by 2 for some
$\delta\in\Omega(F)$.\\
(iii) $\lambda_0^2-\delta^2$ is divisible by 2 for all
$\delta\in\Omega(F)$.\\
(iv) $\lambda_0^2-\delta^2$ is divisible by 8 for some
$\delta\in\Omega(F)$.\\
(v) $\lambda_0^2-\delta^2$ is divisible by 8 for all
$\delta\in\Omega(F)$.\\
(vi) $\bar{v}(\lambda_0)=0$ in $\mathcal{T}^4(F)$.
\end{lem}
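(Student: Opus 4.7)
The plan is to prove the six conditions equivalent via a small cycle supplemented by two auxiliary equivalences. The pairs (ii) $\Leftrightarrow$ (iii) and (iv) $\Leftrightarrow$ (v) follow immediately from Lemma~\ref{lemma on change of delta}(ii): the difference $\delta^2-\delta'^2$ is divisible by $8$, hence by $2$, for any two $\delta,\delta'\in\Omega(F)$, so the property in question propagates from one exceptional class to all. The trivial implications (v) $\Rightarrow$ (iii) $\Rightarrow$ (ii) and (v) $\Rightarrow$ (iv) $\Rightarrow$ (ii) are clear. What remains is to close the cycle (i) $\Rightarrow$ (v) $\Rightarrow$ (ii) $\Rightarrow$ (i) and to link it with (vi).

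For (i) $\Leftrightarrow$ (vi) I would invoke Theorem~\ref{thm on T^4}(ii): the composition $\psi\circ\bar{v}$ is induced by the Beauville--Bogomolov form, so $\lambda_0$ is even iff $\psi(\bar{v}(\lambda_0))=0$, iff $\bar{v}(\lambda_0)$ lies in $\ker\psi=(\Z/5\Z)\bar{w}_0$. Because $\bar{v}$ factors through $\HH^2(F,\Z/2\Z)$ (Lemma~\ref{lem v_delta}), the class $\bar{v}(\lambda_0)$ is $2$-torsion, whereas $\ker\psi$ is $5$-torsion; thus $\bar{v}(\lambda_0)=0$. For (i) $\Rightarrow$ (v), the splitting $\HH^2(F,\Z)=\Z\delta\oplus\delta^\perp$ together with the unimodularity of the $K3$ lattice $\delta^\perp$ forces any even primitive class to be of the form $\lambda_0=c\delta+2\alpha_0$ with $\alpha_0\in\delta^\perp$ and $c$ odd (primitivity rules out $c$ even). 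Then
\[
\lambda_0^2-\delta^2=(c^2-1)\delta^2+4c\,\delta\alpha_0+4\alpha_0^2;
\]
since $c$ is odd, $c^2\equiv 1\pmod{8}$ kills the first term, while Lemma~\ref{lemma on change of delta}(iii) gives $\alpha_0^2\equiv\alpha_0\delta\pmod{2\HH^4}$, so $\alpha_0^2+c\,\alpha_0\delta\equiv(1+c)\alpha_0\delta\equiv 0\pmod{2}$ and multiplication by $4$ yields divisibility by $8$.

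The main obstacle is (ii) $\Rightarrow$ (i), which requires the explicit integral basis of Theorem~\ref{explicit basis II}. Subtracting $\lambda_0^2-\lambda_0\delta\in 2\HH^4$ (from Lemma~\ref{lemma on change of delta}(iii) applied to $\lambda_0$) from the hypothesis $\lambda_0^2-\delta^2\in 2\HH^4$ yields $\delta(\lambda_0-\delta)\in 2\HH^4(F,\Z)$. Write $\lambda_0-\delta=d\delta+\gamma$ with $\gamma=\sum_i m_i\hat{\fa}_i\in\delta^\perp$, choosing $\{\hat{\fa}_i\}$ adapted to a decomposition $\delta^\perp\cong U^{\oplus 3}\oplus E_8(-1)^{\oplus 2}$, so that the inverse Gram matrix $B=A^{-1}$ has an odd off-diagonal entry $b_{ij}$ supplied by the hyperbolic summand. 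Expand
\[
\delta(\lambda_0-\delta)=d\,\delta^2+\sum_i m_i\,\delta\hat{\fa}_i
\]
in the basis of Theorem~\ref{explicit basis II}, using the identity
\[
\delta^2=10v_0(\delta)-\sum_i b_{ii}v_\delta(\hat{\fa}_i)-\tfrac{1}{2}\sum_i b_{ii}\,\delta\hat{\fa}_i-\sum_{i<j}b_{ij}\hat{\fa}_i\hat{\fa}_j,
\]
which is derived from \eqref{eq v0} (after substituting $\hat{\fa}_i^2=2v_\delta(\hat{\fa}_i)+\delta\hat{\fa}_i$) and is integral because even unimodularity of the $K3$ lattice forces $b_{ii}\in 2\Z$. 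The coefficient of $\hat{\fa}_i\hat{\fa}_j$ in $\delta(\lambda_0-\delta)$ is $-db_{ij}$; divisibility by $2$ together with the chosen odd $b_{ij}$ forces $d$ to be even. With $d$ even, the coefficient $m_i-\tfrac{d}{2}b_{ii}$ of $\delta\hat{\fa}_i$ has the same parity as $m_i$ (as $\tfrac{d}{2}b_{ii}$ is an even integer), so each $m_i$ must be even. Hence $\lambda_0-\delta\in 2\HH^2(F,\Z)$, and rewriting gives $\lambda_0=c\delta+2\alpha_0$ with $c$ odd, which is (i).
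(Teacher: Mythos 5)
Your proof is correct, and the overall skeleton (reducing (ii)$\Leftrightarrow$(iii) and (iv)$\Leftrightarrow$(v) to Lemma~\ref{lemma on change of delta}(ii), the trivial implications, and the computation behind (i)$\Rightarrow$(v), which is essentially the paper's (i)$\Rightarrow$(ii)$\Rightarrow$(iv) chain) matches the paper. But you handle the two nontrivial links differently. For (i)$\Leftrightarrow$(vi) the paper writes $\lambda_0=\hat\fa'+c'\delta$ and uses Lemma~\ref{lem v_delta}(iv) (the kernel of $\bar v_\delta$ is generated by $\bar\delta$), whereas you invoke the exact sequence of Theorem~\ref{thm on T^4}(ii) and the observation that $\bar v(\lambda_0)$ is $2$-torsion while $\ker\psi\cong\Z/5\Z$; this is slicker but leans on a heavier result --- fortunately Theorem~\ref{thm on T^4} is proved in Section~3 without reference to this lemma, so there is no circularity. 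For (ii)$\Rightarrow$(i), the paper expands $\lambda_0^2-\delta^2=\hat\fa'^2+2c'\hat\fa'\delta+(c'^2-1)\delta^2$ and asserts tersely that the explicit basis forces $2\mid c'^2-1$ and $\hat\fa'\in 2\delta^\perp$; your route --- subtracting $\lambda_0(\lambda_0-\delta)\in 2\HH^4$ to reduce to $\delta(\lambda_0-\delta)\in 2\HH^4$, expanding $\delta^2$ in the basis of Theorem~\ref{explicit basis II} (your formula for $\delta^2$ checks out against Theorem~\ref{explicit basis}, with the $v_0$-coefficient $\frac{1}{2}\mathrm{tr}(BA)-1=10$), and reading off parities coefficient by coefficient --- is longer but fully explicit and isolates exactly what must be verified. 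One small remark: you do not actually need the $U^{\oplus 3}\oplus E_8(-1)^{\oplus 2}$ decomposition to produce an odd off-diagonal $b_{ij}$; since $\det B=\pm 1$ and all $b_{ii}$ are even, some off-diagonal entry must be odd in any basis.
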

\begin{proof}
(i)$\Rightarrow$(ii). Let $\delta\in\Omega(F)$ be some exceptional
class. Fix a basis $\{\hat\fa_1,\ldots,\hat\fa_{22}\}$ of
$\delta^\perp$. Then we can write $\lambda_0=\hat\fa'+c'\delta$ with
$\hat\fa'\in\delta^\perp$ and $c'\in\Z$. The fact that
$b(\lambda_0,\alpha)\in2\Z$ for all $\alpha\in\delta^\perp$ and the
fact that $b(-,-)$ is unimodular on $\delta^\perp$ implies that
$\hat\fa'=2\hat\fa$ for some $\hat\fa\in\delta^\perp$. Since
$\lambda_0$ is primitive, we know that $c'$ must be odd and hence we
can write $\lambda_0=2\hat\fa+(2c+1)\delta$, for some $c\in\Z$. Then
it is easy to see that $\lambda_0^2-\delta^2$ is divisible by 2.

(ii)$\Rightarrow$(i). We still write $\lambda_0=\hat\fa'+c'\delta$.
Then we get
$$\lambda_0^2-\delta^2=\hat\fa'^2+2c'\hat\fa'\delta+(c'^2-1)\delta^2.$$
Using the explicit basis obtained in Theorem \ref{explicit basis},
we see that the above expression is divisible by 2 only if $2\mid
c'^2-1$ and $\hat\fa'=2\hat\fa$ for some $\hat\fa\in\delta^\perp$.
Hence we get $\lambda_0=2\hat\fa+(2c+1)\delta$ for some $c\in\Z$.
This implies that $\lambda_0$ is even.

(ii)$\Rightarrow$(iii) and (iv)$\Rightarrow$(v). This is due to the
fact that $\delta^2-\delta'^2$ is divisible by 8 for all
$\delta,\delta'\in\Omega(F)$, see Lemma \ref{lemma on change of
delta}.

(iii)$\Rightarrow$(ii) and (v)$\Rightarrow$(iv) are automatic.

(ii)$\Rightarrow$(iv). As above, we can write
$\lambda_0=2\hat\fa+(2c+1)\delta$. Hence we get
$$
\lambda_0^2-\delta^2=8v_\delta(\hat\fa)+8(c+1)\delta\hat\fa+4c(c+1)\delta^2,
$$
which is easily seen to be divisible by 8.

(iv)$\Rightarrow$(vi). Again, we can write
$\lambda_0=2\hat\fa+(2c+1)\delta$. Then we easily see that
$\bar{v}(\lambda_0)=\bar{v}_\delta(\lambda_0)=0$ in
$\mathcal{T}^4(F)$.

(vi)$\Rightarrow$(i). Pick some $\delta\in\Omega(F)$. If we write
$\lambda_0=\hat\fa'+c'\delta$ for some $\hat\fa'\in\delta^\perp$ and
$c'\in\Z$, then $\bar{v}(\lambda_0)=\bar{v}(\hat\fa')=0$ implies
that $\hat\fa'$ is $0$ in $\delta^\perp\otimes\Z/2\Z$, i.e.
$\hat\fa'=2\hat\fa$ for some $\hat\fa\in\delta^\perp$. Since
$\lambda_0$ is primitive, we get $c'=2c+1$ for some $c\in\Z$. This
shows that $\lambda_0$ is even.
\end{proof}

\begin{lem}\label{lemma on integral classes}
Let $(F,\lambda_0)$ be primitively polarized as above, then the
following are true. \\
(i) The two elements $(\lambda_0)^2$ and $q$ are linearly
independent in $\HH^2(F,\C)$.\\
(ii) For any $\alpha,\beta\in\HH^2(F,\Z)$, we have
$$
(q\cdot\alpha\cdot\beta)_F=25 b(\alpha,\beta).
$$
(iii) The element $\frac{2}{5}q$ is integral and primitive, namely
$\frac{2}{5}q\in\HH^4(F,\Z)$. Furthermore,
$$\frac{2}{5}q+\delta^2=8v_0(\delta),\quad\forall\delta\in\Omega(F).$$
(iv) If $\lambda_0$ is odd, then
$$
V_{\lambda_0}=\Z(\lambda_0)^2\oplus\Z(\frac{2}{5}q).
$$
(v) If $\lambda_0$ is even, then $(\lambda_0)^2+\frac{2}{5}q$ is
divisible by 8 and
$$
V_{\lambda_0}=\Z(\lambda_0)^2\oplus\Z\frac{1}{8}(\lambda_0^2+\frac{2}{5}q).
$$
\end{lem}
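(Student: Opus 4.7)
The plan is to prove the five parts in order. Parts (i) and (ii) reduce to the Fujiki relation; parts (iii)--(v) hinge on the identity $\frac{2}{5}q = 8v_0(\delta) - \delta^2$, which threads together the rest of the proof.

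For (ii), I choose a $\Q$-basis $\{\fa_k\}$ of $\HH^2(F,\Q)$ with intersection matrix $A=(a_{ij})$, inverse $C=(c_{ij})=A^{-1}$, and write $q=\sum c_{ij}\fa_i\fa_j$. Applying Lemma \ref{lem sym2}(iii) to $(q\cdot\alpha\cdot\beta)_F$ produces three sums: the first contributes $\mathrm{tr}(CA)\cdot b(\alpha,\beta) = 23\,b(\alpha,\beta)$, and the other two each collapse to $b(\alpha,\beta)$ via the identity $ACA=A$, for a total of $25\,b(\alpha,\beta)$. Part (i) then follows: (ii) says that $q$ induces $25b$ on $\HH^2(F,\C)$ via triple intersections, while $\lambda_0^2$ induces $b(\lambda_0,\lambda_0)\,b + 2\,b(\lambda_0,-)\otimes b(\lambda_0,-)$ by Fujiki; these bilinear forms differ by a nonzero rank-one term and are therefore linearly independent, so $\lambda_0^2$ and $q$ cannot be proportional in $\HH^4(F,\C)$.

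Part (iii): rearranging the defining equation \eqref{eq v0} gives $\sum b_{ij}\hat\fa_i\hat\fa_j = 20\,v_0(\delta) - 2\delta^2$, whence $q = 20\,v_0(\delta) - \tfrac{5}{2}\delta^2$ and $\tfrac{2}{5}q+\delta^2 = 8\,v_0(\delta)$. Integrality of $\tfrac{2}{5}q$ follows since $v_0(\delta)$ (Remark after Definition \ref{defn of v_0 and v_delta}) and $\delta^2$ are integral. For primitivity, suppose $\tfrac{2}{5}q = n\theta$ with integer $n>1$ and $\theta\in\HH^4(F,\Z)$: part (ii) gives $n\mid 10\,b(\alpha,\beta)$ for all $\alpha,\beta$, and the unimodular $U$-summand of $\Lambda$ provides a pair with $b(\alpha,\beta)=1$, so $n\mid 10$. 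By Lemma \ref{size of torsions}(ii) the group $\mathcal{T}^4(F)$ has order $5\cdot 2^{23}$, so it contains no element of order $25$ or $50$; since $\overline{\tfrac{2}{5}q}=8\bar v_0(\delta)$ already has order exactly $5$, this excludes $n\in\{5,10\}$. The case $n=2$ is ruled out by deforming to $F=S^{[2]}$ (primitivity is a topological invariant): then $\tfrac{1}{5}q = 4\,v_0(\delta) - \tfrac{1}{2}\delta^2$ would force $\delta^2\in 2\HH^4(F,\Z)$, but by Theorem \ref{explicit basis} the class $\delta^2$ has coefficient $-1$ on the basis element $v_0$ and is therefore primitive.

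Parts (iv) and (v): the inclusion $\supseteq$ is immediate from (iii) in the odd case; in the even case it follows from Lemma \ref{lemma on even classes}, which yields $\lambda_0^2-\delta^2 \in 8\HH^4(F,\Z)$ so that $\lambda_0^2+\tfrac{2}{5}q = (\lambda_0^2-\delta^2) + 8\,v_0(\delta)$ is divisible by $8$. For the reverse inclusion, I take $x = a\lambda_0^2 + c\cdot\tfrac{2}{5}q \in V_{\lambda_0}$ with $a,c\in\Q$, write $\lambda_0 = \hat\fa' + c'\delta$ in the splitting $\Lambda = \delta^\perp \oplus \Z\delta$, and expand $x$ in the integral basis of $\HH^4(F,\Z)$ from Theorem \ref{explicit basis II}. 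The condition $x\in\HH^4(F,\Z)$ becomes a system of linear congruences on $(a,c)$ whose coefficients involve the entries of the intersection form on $\delta^\perp$ and its inverse. In the odd case these congruences, combined with the fact that $\lambda_0$ is odd and that $b_{ii}$ is even (since $\delta^\perp$ is a $K3$-lattice), force $a,c\in\Z$. In the even case one replaces $\tfrac{2}{5}q$ by $z=\tfrac{1}{8}(\lambda_0^2+\tfrac{2}{5}q)$ and uses the congruence $b(\lambda_0,\lambda_0)\equiv 6\pmod{16}$ from Assumption \ref{technical assumption} to run the same argument. The main obstacle is this last step: one must simultaneously track the $5$-primary constraint, controlled by the image in $\mathcal{T}^4(F)$, and the $2$-primary constraints, controlled by the basis expansion and Assumption \ref{technical assumption}, and verify that no unexpected fractional combination lands in $V_{\lambda_0}$ beyond the generators claimed.
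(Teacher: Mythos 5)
Parts (i)--(iii) of your proposal are essentially correct. Your computation of $(q\cdot\alpha\cdot\beta)_F=25\,b(\alpha,\beta)$ using a full rank-$23$ basis (trace $23$ plus twice $b(\alpha,\beta)$ via $ACA=A$) is a legitimate variant of the paper's computation in the split basis $\delta^\perp\oplus\Z\delta$ (where the count is $22+1+2$); your rank-one argument for (i) is different from the paper's numerical identity $3b_0^2=b_0^2$ but equally valid; and your derivation of $\tfrac{2}{5}q=8v_0(\delta)-\delta^2$ is exactly the paper's. Your primitivity argument for $\tfrac{2}{5}q$ (reduce to $n\mid 10$, kill $n=5,10$ via the order of $8\bar v_0$ in $\mathcal{T}^4(F)$, kill $n=2$ via the coefficient of $v_0$ in $\delta^2$) is actually more detailed than what the paper writes, and it is sound.

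The genuine gap is in (iv) and (v), which is where the real content of the lemma lies. You correctly set up the problem --- expand $a\lambda_0^2+c\cdot\tfrac{2}{5}q$ against the integral basis of Theorem \ref{explicit basis II} and extract congruences --- but you then assert that the congruences ``force $a,c\in\Z$'' and explicitly flag the verification as ``the main obstacle,'' i.e.\ you have not done it. The paper's argument here is not routine: assuming $\tfrac{1}{p}(a\lambda_0^2+b\cdot\tfrac{2}{5}q)$ is integral with $p$ prime and $\gcd(a,b)=1$, one writes $\lambda_0=k\hat\fa_0-c\delta$, first deduces $p\mid 2kc$ from the pairings with $\delta\hat\fa_i$, and then splits into cases. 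In the case $p\mid c$ the pairings with $v_\delta(\hat\fa_i)$ and $\hat\fa_i\hat\fa_j$ yield $a_{ij}\equiv c_0\tilde\mu_i\tilde\mu_j\pmod p$ for \emph{all} $i,j$, which contradicts $\det A=\pm1$ because the matrix $(\tilde\mu_i\tilde\mu_j)$ has rank one; in the case $p\mid k$ one gets $p\mid 8a_{ij}$, forcing $p=2$; and the case $p=2$ is then shown to occur exactly when $\lambda_0$ is even, with the exact power of $2$ controlled by $\det M=176\,b_0^2$ and the fact that $b_0/2$ is odd for even $\lambda_0$. None of these steps appears in your sketch, and the determinant/rank-one contradiction is the key idea you are missing. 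A further warning sign: you propose to invoke Assumption \ref{technical assumption} (via $b_0\equiv 6\pmod{16}$) in the even case, but the lemma does not assume it and the paper's proof of (v) does not need it --- only the automatic fact that $b_0\equiv 2\pmod 4$ for even primitive $\lambda_0$ is used. As written, your argument for (iv)--(v) is a plan, not a proof.
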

\begin{proof}
We fix an exceptional class $\delta\in\Omega(F)$ and let $\hat\fa_i$, $i=1,\ldots, 22$, be an integral basis of $\delta^\perp$. Let $A=(a_{ij})_{1\leq i,j\leq 22}$ be the matrix representing the restriction of $b$ to $\delta^\perp$. Let $B=(b_{ij})=A^{-1}$. In particular, we have the explicit expression \eqref{eq q} for the class $q$.

We prove (ii) first by direct computation as follows. Assume that
$$
\alpha=a\delta + \sum x_i\hat\fa_i,\qquad \beta=b\delta + \sum y_i\hat\fa_i.
$$
Then we have
\begin{align*}
(q\cdot\alpha\cdot\beta)_F &=\sum
b_{ij}(\hat\fa_i\cdot\hat\fa_j\cdot\alpha\cdot\beta)_F - \frac{1}{2}(\delta^2\cdot\alpha\cdot\beta)\\
&=\sum
b_{ij}(b(\hat\fa_i,\hat\fa_j)b(\alpha,\beta)+b(\hat\fa_i,\alpha)b(\hat\fa_j,\beta)+
b(\hat\fa_i,\beta)b(\hat\fa_j,\alpha))\\
&\quad -\frac{1}{2}b(\delta,\delta)b(\alpha,\beta) - b(\alpha,\delta)b(\beta,\delta)\\
&= (\sum b_{ij}a_{ij})b(\alpha,\beta) + 2\sum
x_ka_{ki}b_{ij}a_{jl}y_l + b(\alpha,\beta) - 4ab\\
&= \mathrm{tr}(AB)b(\alpha,\beta) +b(\alpha,\beta) +2(\sum x_ka_{kl}y_l  -2ab)\\
&= 22b(\alpha,\beta)+b(\alpha,\beta)+2b(\alpha,\beta)\\
&=25b(\alpha,\beta).
\end{align*}

To prove (i) we first note that for all
$\alpha,\beta\in(\lambda_0)^\perp$, we have
$$
(\lambda_0^2\cdot\alpha\cdot\beta)_F=b(\lambda_0,\lambda_0)b(\alpha,\beta).
$$
Hence if $\lambda_0^2$ and $q$ are proportional, then
$\lambda_0^2=\frac{b_0}{25}q$ where $b_0=b(\lambda_0,\lambda_0)$.
This implies
$$
3b_0^2=(\lambda_0^2\cdot\lambda_0^2)=\frac{b_0}{25}(q\cdot\lambda_0^2)=\frac{b_0}{25}\cdot25b_0=b_0^2.
$$
This forces $b_0=0$ which is impossible since $\lambda_0$ is an
ample class.

To prove (iii), we first compare the equation \eqref{eq v0} and the equation \eqref{eq q} and get
\begin{equation*}
5v_0(\delta) + q =\frac{5}{4} \sum_{i,j}b_{ij}\hat\fa_i\hat\fa_j.
\end{equation*}
Note that $\frac{1}{2}\sum b_{ij}\hat\fa_i\hat\fa_j$ is integral. It follows that $$\frac{2}{5}q = \frac{1}{2}\sum_{i,j}b_{ij}\hat\fa_i\hat\fa_j - 2v_0(\delta)$$ is integral and primitive. Equation \eqref{eq v0} implies that $\frac{2}{5}q+\delta^2=8v_0(\delta)$. This proves (iii).

The intersection matrix of $\{\lambda_0^2,\frac{2}{5}q\}$ is given
by
\begin{equation}
M=\begin{pmatrix}
 \lambda_0^2\cdot\lambda_0^2 &\frac{2}{5}q\cdot\lambda_0^2\\
 \frac{2}{5}q\cdot\lambda_0^2 &\frac{2}{5}q\cdot\frac{2}{5}q
\end{pmatrix}
=
\begin{pmatrix}
 3b_0^2 &10b_0\\
 10b_0 &92
\end{pmatrix}
\end{equation}
Hence $\det(M)=176b_0^2$. Now we assume that
$$
\frac{1}{p}(a\lambda_0+b(\frac{2}{5}q))\in\HH^4(F,\Z)
$$
where $p$ is a prime and $a,b\in\Z$ with $\gcd(a,b)=1$. By Theorem \ref{explicit basis II},
we have an integral basis
$$
\mathcal{B}=\{v_0(\delta),v_\delta(\hat\fa_i),\hat\fa_i\hat\fa_j,\delta\hat\fa_i\}
$$
for $\HH^4(F,\Z)$. We assume that $\lambda_0=k\hat\fa_0-c\delta$,
for some $k,c\in\Z$ and primitive $\hat\fa_0\in\delta^\perp$. Let $\mu=b(\lambda_0,\delta)=2c$ and
$\mu_i=b(\lambda_0,\hat\fa_i)=k\tilde{\mu}_i$, where
$\tilde{\mu}_i=b(\hat\fa_0,\hat\fa_i)$. The intersection of
$\{\lambda_0^2,\frac{2}{5}q\}$ with the basis $\mathcal{B}$ is given
by
\begin{equation}\label{intersection with basis}
 \begin{pmatrix}
  \lambda_0^2\\
  \frac{2}{5}q
 \end{pmatrix}
  \cdot\mathcal{B} =
 \begin{pmatrix}
  k^2a_0 -c^2 &\mu_i(\mu_i-\mu)+\frac{a_{ii}}{2}b_0 &2\mu_i\mu_j+b_0a_{ij} &2\mu\mu_i\\
  9 &5a_{ii} &10a_{ij} &0
 \end{pmatrix}
\end{equation}
where $a_0 = b(\hat\fa_0,\hat\fa_0)$.

\vspace{1mm}
\textbf{Claim}: The fact that $p\mid a\lambda_0^2+b(\frac{2}{5}q)$
implies that $p\mid 4kc$.

\textit{Proof of claim.} Since both $\lambda_0^2$ and
$\frac{2}{5}q$ are primitive, we know that $p\nmid ab$. The last
column of \eqref{intersection with basis} implies that $p\mid
4kc\tilde{\mu_i}$, for all $i=1,\ldots,22$. This forces $p\mid 2kc$.\qed

\vspace{1mm}
\textit{Case 1}: $p\mid c$.

In this case we have
$$
b_0=b(\lambda_0,\lambda_0)=k^2\;b(\hat\fa_0,\hat\fa_0) -2c^2\equiv
k^2a_0,\mod p.
$$
The first column of \eqref{intersection with basis} implies
$$
ak^2a_0+9b\equiv 0\mod p.
$$
By looking at the second column of \eqref{intersection with basis},
we get the following
\begin{align*}
0 &\equiv a\mu_i(\mu_i-\mu) +a\cdot\frac{a_{ii}}{2}b_0 +b\cdot 5a_{ii}\mod p\\
 &\equiv
 ak^2\tilde{\mu}_i^2+a\cdot\frac{a_{ii}}{2}k^2a_0
+5a_{ii}b\mod p\\
 &=ak^2\tilde{\mu_i}^2 +\frac{a_{ii}}{2}(ak^2a_0+9b)
 +\frac{a_{ii}}{2}b\\
 &\equiv ak^2\tilde{\mu}_i^2+\frac{a_{ii}}{2}b\mod p.
\end{align*}
This implies that $a_{ii}=c_0\tilde{\mu}_i^2\mod p$, where
$c_0=-\frac{2ak^2}{b}$. Note that here we use the fact that $p\nmid b$. By looking at the third column of
\eqref{intersection with basis}, we get
\begin{align*}
0 &\equiv a\cdot 2k^2\tilde{\mu}_i\tilde{\mu}_j +a\cdot
k^2a_0 a_{ij} +10ba_{ij}\mod p\\
 &=2ak^2\tilde{\mu}_i\tilde{\mu}_j
 +a_{ij}(ak^2a_0+9b) +ba_{ij}\\
 &\equiv 2ak^2\tilde{\mu_i}\tilde{\mu}_j +ba_{ij}\mod p.
\end{align*}
It follows that $a_{ij}=c_0\tilde{\mu}_i\tilde{\mu}_j\mod p$ for all
$i<j$. Combine the above expressions for $a_{ij}$, we have
$$
\pm 1=\det(A)\equiv (c_0)^{22}\det(\tilde{\mu}_i\tilde{\mu}_j)=0\mod
p.
$$
This is a contradiction and hence case 1 never happens.

\textit{Case 2}: $p\mid k$.

In this case, we first have $b_0\equiv -2c^2\mod p$. The first
column of \eqref{intersection with basis} gives
\begin{equation*}
-ac^2 +9b\equiv 0\mod p.
\end{equation*}
Then the third column of \eqref{intersection with basis} gives
\begin{align*}
0 &\equiv 2a\mu_i\mu_j +ab_0a_{ij}+10ba_{ij}\mod p\\
 &\equiv a(-2c^2)a_{ij} +10ba_{ij}\mod p\\
 &= 2a_{ij}(-ac^2+9b)-8ba_{ij}\\
 &\equiv -8ba_{ij}\mod p.
\end{align*}
Since $p\nmid b$, we have $p\mid 8a_{ij}$ for all $i,j$. Since
$\det(A)=\pm 1$, there exists $(i,j)$ such that $p\nmid a_{ij}$. It
follows that $p\mid 8$ and hence $p=2$.

\textit{Case 3}: $p=2$.

In this case, we have $2\mid \lambda_0^2+\frac{2}{5}q$ and also
$8\mid \delta^2+\frac{2}{5}q$. By taking the difference, we see that
$\lambda_0^2-\delta^2$ is divisible by 2. By Lemma \ref{lemma on
even classes}, we know that $\lambda_0$ is even. This implies that
$$
\lambda_0^2+\frac{2}{5}q = (\delta^2+\frac{2}{5}q) +
(\lambda_0^2-\delta^2)
$$
is divisible by 8. When $\lambda_0$ is even, we easily see that
$b_0/2=b(\lambda_0,\lambda_0)/2$ is odd. Hence the power of the
factor 2 in $\det(M)=176b_0^2$ is 64. It follows that
$$
\frac{1}{8}(\lambda_0^2+\frac{2}{5}q)
$$
is no longer divisible by 2 and hence is primitive. This proves (iv)
and (v).
\end{proof}

\begin{cor}\label{cor index 2}
If $\lambda_0$ satisfies the Assumption \ref{technical assumption}, then for all
$\alpha\in V_{\lambda_0}$ and
$\hat\fa,\hat\fa'\in(\lambda_0)^\perp\subset\HH^2(F,\Z)$, we have
$$
(\alpha\cdot\hat\fa\cdot\hat\fa')_F=e\,b(\hat\fa,\hat\fa'),
$$
for some $e\in 2\Z$. In particular, $V_{\lambda_0}$ contains no minimal Hodge classes.
\end{cor}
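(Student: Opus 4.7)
The plan is to verify the identity $(\alpha\cdot\hat\fa\cdot\hat\fa')_F=e\,b(\hat\fa,\hat\fa')$ directly on an integral basis of $V_{\lambda_0}$ supplied by Lemma \ref{lemma on integral classes}(iv) and (v), namely $\{\lambda_0^2,\frac{2}{5}q\}$ when $\lambda_0$ is odd and $\{\lambda_0^2,\frac{1}{8}(\lambda_0^2+\frac{2}{5}q)\}$ when $\lambda_0$ is even. For any $\hat\fa,\hat\fa'\in\lambda_0^{\perp}$, the Fujiki relation of Lemma \ref{lem sym2}(iii) gives
$$(\lambda_0^2\cdot\hat\fa\cdot\hat\fa')_F=b(\lambda_0,\lambda_0)\,b(\hat\fa,\hat\fa'),$$
the two cross terms $b(\lambda_0,\hat\fa)b(\lambda_0,\hat\fa')$ vanishing by hypothesis. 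Lemma \ref{lemma on integral classes}(ii) contributes the complementary identity $(\frac{2}{5}q\cdot\hat\fa\cdot\hat\fa')_F=10\,b(\hat\fa,\hat\fa')$, and adding these and dividing by $8$ handles the extra generator in the even case, giving $(\frac{1}{8}(\lambda_0^2+\frac{2}{5}q)\cdot\hat\fa\cdot\hat\fa')_F=\frac{b(\lambda_0,\lambda_0)+10}{8}\,b(\hat\fa,\hat\fa')$.

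The next step is a parity check on the three scalars. The decomposition $\HH^2(F,\Z)=\Z\delta\oplus\delta^{\perp}$ from the remark after Definition of exceptional class, together with $b(\delta,\delta)=-2$ and the fact that $\delta^{\perp}$ is an even unimodular $K3$-lattice, shows that $b$ takes even values on the diagonal; in particular $b(\lambda_0,\lambda_0)\in 2\Z$. Hence in the odd case both coefficients $b(\lambda_0,\lambda_0)$ and $10$ are automatically even, and in the even case the remaining scalar $\frac{b(\lambda_0,\lambda_0)+10}{8}$ is even by Assumption \ref{technical assumption}. Writing an arbitrary $\alpha\in V_{\lambda_0}$ as an integral combination of the listed generators then yields $e\in 2\Z$ in all situations.

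For the final clause, I would argue by contradiction. Since $\lambda_0$ is the class of an ample divisor, it lies in $\mathrm{Alg}^2(F)$ and hence $\HH^2(F,\Z)_{\mathrm{tr}}=\mathrm{Alg}^2(F)^{\perp}\subseteq\lambda_0^{\perp}$. If some $\alpha\in V_{\lambda_0}$ were a minimal class, the first part of the corollary applied to transcendental pairs would force $b(\hat\fa,\hat\fa')=e\,b(\hat\fa,\hat\fa')$ for all $\hat\fa,\hat\fa'\in\HH^2(F,\Z)_{\mathrm{tr}}$ with a single fixed even $e$. Since the holomorphic 2-form produces a nonzero class in the transcendental lattice and $b$ is nondegenerate there, one can find a pair with $b(\hat\fa,\hat\fa')\neq 0$, forcing $e=1$ and contradicting $e\in 2\Z$. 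There is no real obstacle in the argument; the only subtle point is the bookkeeping for the factor $8$, and Assumption \ref{technical assumption} is engineered precisely so that this $8$ in Lemma \ref{lemma on integral classes}(v) gets absorbed with one power of $2$ to spare.
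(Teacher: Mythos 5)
Your proposal is correct and follows essentially the same route as the paper: the paper's proof likewise just evaluates $\lambda_0^2$ and $\frac{2}{5}q$ against $\hat\fa\cdot\hat\fa'$ (getting $b(\lambda_0,\lambda_0)$ and $10$) and then concludes on the generators of $V_{\lambda_0}$ from Lemma \ref{lemma on integral classes}(iv)--(v). You have merely made explicit the details the paper leaves implicit, namely the evenness of $b(\lambda_0,\lambda_0)$, the role of Assumption \ref{technical assumption} in absorbing the factor $8$, and the nondegeneracy of $b$ on the transcendental lattice for the final clause.
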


\begin{proof}
 We note that for all
$\fa,\fa'\in\HH^2(F,\Z)_\mathrm{tr}$, we have
$$
\frac{2}{5}q\cdot\fa\cdot\fa'=10b(\fa,\fa'),\quad\lambda_0^2\cdot\fa\cdot\fa'=b(\lambda_0,\lambda_0)b(\fa,\fa').
$$
Then one concludes by evaluating $\fa\fa'$ on the generators of
$V_{\lambda_0}$.
\end{proof}

\subsection{Minimal Hodge classes}
\begin{prop}\label{minimal hodge class}
Let $F$ be a hyperk\"ahler manifold of $K3^{[2]}$-type. Then a
minimal Hodge class $\theta$ is always in the $\Q$-vector space
spanned by $\Sym^2(\Pic(F))$ and $q$.
\end{prop}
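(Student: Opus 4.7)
The plan is to translate the question into symmetric algebra via the isomorphism $\Sym^2(\HH^2(F,\Q))\xrightarrow{\sim}\HH^4(F,\Q)$ of Lemma \ref{lem sym2}(i), and then to exploit the $b$-orthogonal, Hodge-compatible splitting
\begin{equation*}
 \HH^2(F,\Q) = \Pic(F)_\Q\oplus \HH^2(F,\Q)_{\mathrm{tr}}.
\end{equation*}
This is genuinely a direct sum because $b$ is non-degenerate on $\Pic(F)_\Q$ (it is positive on any ample class and negative-definite on its $b$-orthogonal inside $\Pic(F)_\R$), and both summands are rational sub-Hodge structures by the Lefschetz $(1,1)$-theorem. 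Taking $\Sym^2$ induces a Hodge decomposition
\begin{equation*}
 \Sym^2\HH^2(F,\Q) = \Sym^2\Pic(F)_\Q\;\oplus\;\bigl(\Pic(F)_\Q\otimes \HH^2(F,\Q)_{\mathrm{tr}}\bigr)\;\oplus\;\Sym^2\HH^2(F,\Q)_{\mathrm{tr}},
\end{equation*}
so I would write $\theta = \theta_1+\theta_2+\theta_3$ accordingly, each summand being itself a rational Hodge class.

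The first substantive step is to show $\theta_2=0$. Fix a $\Q$-basis $\{\lambda_i\}$ of $\Pic(F)_\Q$ and write $\theta_2 = \sum_i\lambda_i\otimes\xi_i$ with uniquely determined $\xi_i\in\HH^2(F,\Q)_{\mathrm{tr}}$. The Hodge type $(2,2)$ condition on $\theta_2$ forces each $\xi_i$ to be of type $(1,1)$ over $\C$; but any rational $(1,1)$-class on $F$ is algebraic by Lefschetz, hence lies in $\Pic(F)_\Q$, so $\xi_i\in\Pic(F)_\Q\cap\HH^2(F,\Q)_{\mathrm{tr}}=0$.

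Next I would use the minimality hypothesis to pin down $\theta_3$. For $\alpha,\beta\in\HH^2(F,\Q)_{\mathrm{tr}}$, the Fujiki-type formula of Lemma \ref{lem sym2}(iii) together with $b(\lambda_i,\alpha)=0$ yields $\theta_1\cdot\alpha\cdot\beta = C_1\,b(\alpha,\beta)$ for a constant $C_1$ depending only on $\theta_1$. Expanding $\theta_3$ in a basis $\{\fa_k\}$ of $\HH^2(F,\Q)_{\mathrm{tr}}$ with Gram matrix $A$ and symmetric coefficient matrix $\tilde C$, the same lemma produces
\begin{equation*}
 \theta_3\cdot\alpha\cdot\beta = \mathrm{tr}(\tilde CA)\,b(\alpha,\beta) + 2\,x^{\top}A\tilde CAy,
\end{equation*}
where $x,y$ are the coordinate vectors of $\alpha,\beta$. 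Imposing $\theta\cdot\alpha\cdot\beta = b(\alpha,\beta)$ for all such $\alpha,\beta$ then forces $A\tilde CA$ to be a scalar multiple of $A$, so $\tilde C = e\,A^{-1}$ for some $e\in\Q$, i.e.\ $\theta_3 = e\cdot b^{-1}_{\mathrm{tr}}$, where $b^{-1}_{\mathrm{tr}}\in\Sym^2\HH^2(F,\Q)_{\mathrm{tr}}$ is the transcendental part of $b^{-1}$ in the sense of equation \eqref{eq def of b}.

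Finally, the same orthogonal splitting gives $b^{-1} = b^{-1}_{\mathrm{Pic}} + b^{-1}_{\mathrm{tr}}$ with $b^{-1}_{\mathrm{Pic}}\in\Sym^2\Pic(F)_\Q$; since $q$ corresponds to $b^{-1}$ under the isomorphism of Lemma \ref{lem sym2}(i), we conclude
\begin{equation*}
 \theta = \theta_1 + e\cdot b^{-1}_{\mathrm{tr}} = \bigl(\theta_1 - e\cdot b^{-1}_{\mathrm{Pic}}\bigr) + e\cdot q\;\in\;\Sym^2\Pic(F)_\Q + \Q\cdot q,
\end{equation*}
which is exactly the claim. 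The only conceptually non-formal ingredient is the vanishing of the mixed component $\theta_2$ via Lefschetz $(1,1)$; everything else is straightforward linear algebra built on the Fujiki identity of Lemma \ref{lem sym2}(iii), so I expect no serious obstacle in executing this plan.
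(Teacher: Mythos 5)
Your proposal is correct and follows essentially the same route as the paper: decompose $\Sym^2\HH^2(F,\Q)$ along the orthogonal splitting into Picard and transcendental parts, kill the mixed component by a Hodge-type/Lefschetz $(1,1)$ argument, and use the minimality condition together with the Fujiki identity of Lemma \ref{lem sym2}(iii) to force the purely transcendental component to be a multiple of the transcendental part of $q$. The only cosmetic differences are the order of the two steps and your use of a rational basis with Gram matrix $A$ where the paper uses a complex orthonormal basis of $\mathrm{Hdg}^2(F)^\perp\otimes\C$.
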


\begin{proof}
Let $\{\fa_1,\ldots,\fa_r\}$ be a basis of $\mathrm{Hdg}^2(F)$ and
$\{\mathfrak{t}_1,\ldots,\mathfrak{t}_s\}$ be an orthonormal basis
of $\mathrm{Hdg}^2(F)^\perp\otimes\C$, i.e.
$b(\mathfrak{t}_\alpha,\mathfrak{t}_\beta)=\delta_{\alpha\beta}$,
$\forall \alpha,\beta\in\{1,\ldots,s\}$. Assume that
$\theta\in\HH^4(F,\Z)$ is a minimal Hodge class. We can write
$\theta$ explicitly as
$$
\theta=\sum_{i,j=1}^r x_{ij}\fa_i\fa_j
+\sum_{i=1}^r\sum_{\alpha=1}^s y_{i\alpha}\fa_i\mathfrak{t}_\alpha
+\sum_{\alpha,\beta=1}^s
z_{\alpha\beta}\mathfrak{t}_{\alpha}\mathfrak{t}_{\beta},
$$
for some $x_{ij},y_{i\alpha},z_{\alpha\beta}\in\C$ with
$x_{ij}=x_{ji}$ and $z_{\alpha\beta}=z_{\beta\alpha}$. Since
$\theta$ is a minimal class, for all
$\alpha,\beta\in\{1,\ldots,s\}$, we have
\begin{align*}
\delta_{\alpha\beta} & = \theta\cdot\mathfrak{t}_\alpha\cdot\mathfrak{t}_{\beta}\\
   & =\sum_{i,j}x_{ij}b(\fa_i,\fa_j)\delta_{\alpha\beta} +\sum_{\alpha',\beta'}z_{\alpha'\beta'}(\delta_{\alpha'\beta'}\delta_{\alpha\beta}+\delta_{\alpha'\alpha}\delta_{\beta'\beta}+\delta_{\alpha'\beta}\delta_{\beta'\alpha})\\
   & =\sum_{ij}x_{ij}b(\fa_i,\fa_j)\delta_{\alpha\beta} + (\sum_{\alpha'}z_{\alpha'\alpha'})\delta_{\alpha\beta} +2z_{\alpha\beta}.
\end{align*}
When $\alpha\neq\beta$, we see that $z_{\alpha\beta}=0$; when
$\beta=\alpha$, we get that $z_{\alpha\alpha}=c$ is a constant that
is independent of $\alpha$. By definition, the element $q$ has the
form
$$
q=\sum_{i,j}c_{ij}\fa_i\fa_j +\sum_{\alpha}\mathfrak{t}_\alpha\mathfrak{t}_{\alpha},
$$
which implies that
$q_{\mathrm{tr}}=\sum_{\alpha}\mathfrak{t}_\alpha\mathfrak{t}_{\alpha}$
is of type $(2,2)$ in the Hodge decomposition. Now we take an
integral basis
$\{\tilde{\mathfrak{t}}_1,\ldots,\tilde{\mathfrak{t}_s}\}$ of
$\HH^2(F,\Z)_{\mathrm{tr}}$. Hence we have a new explicit expression
of $\theta$,
$$
\theta=\sum_{i,j=1}^r x_{ij}\fa_i\fa_j
+\sum_{i=1}^r\sum_{\alpha=1}^s
\tilde{y}_{i\alpha}\fa_i\tilde{\mathfrak{t}}_\alpha +
cq_{\mathrm{tr}}
$$
where $\tilde{y}_{i\alpha},c\in\Q$.  Since $\theta$ is a Hodge
class, we have
$$
\sum_{i=1}^r\sum_{\alpha=1}^s
\tilde{y}_{i\alpha}\fa_i\tilde{\mathfrak{t}}_\alpha^{0,2}=0.
$$
This implies that
$$
\sum_{\alpha=1}^s \tilde{y}_{i\alpha}\tilde{\mathfrak{t}}_\alpha^{0,2}=0,
$$
since $\fa_i$ form a basis. Now it follows that $\sum_{\alpha=1}^s
\tilde{y}_{i\alpha}\tilde{\mathfrak{t}}_\alpha$ is of type $(1,1)$
which forces it to be 0. Hence we have $\theta=\sum_{i,j=1}^r
x_{ij}\fa_i\fa_j+cq_{\mathrm{tr}}$ which is in the $\Q$-span of
$\Sym^2(\Pic(F))$ and $q$.
\end{proof}

\begin{cor}\label{jacobian picard rank}
Let $(F,\lambda_0)$ be a primitively polarized hyperk\"ahler
manifold of $K3^{[2]}$-type with $\lambda_0$ satisfying Assumption \ref{technical
assumption}. If $F$ is of Jacobian type, then the Picard number of
$F$ is at least 2. In particular, a very general such $F$ is not of Jacobian type.
\end{cor}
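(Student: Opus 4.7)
The plan is to combine the two main technical results of the preceding subsection, Proposition \ref{minimal hodge class} and Corollary \ref{cor index 2}, with a standard moduli/Noether--Lefschetz argument. First I would note that if $F$ is of Jacobian type, then by definition there is $f\colon S\to F$ with $f_*[S]$ a minimal class, and this class is automatically Hodge; so it suffices to prove that under Assumption \ref{technical assumption} the existence of an integral minimal Hodge class forces $\rho(F)\ge 2$ (this is the content of Theorem \ref{main theorem on generic nonjacodbian}).

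Next I would argue by contradiction: suppose $\Pic(F)=\Z\lambda_0$ has rank one, and let $\theta\in\HH^4(F,\Z)$ be a minimal Hodge class. By Proposition \ref{minimal hodge class}, $\theta$ lies in the $\Q$-linear span of $\Sym^2(\Pic(F))$ and $q$. Under the rank one hypothesis this span collapses to $\Q\lambda_0^2+\Q q$, so together with the integrality of $\theta$ and Definition \ref{defn V lambda} we conclude $\theta\in V_{\lambda_0}$. But Corollary \ref{cor index 2} (which is where Assumption \ref{technical assumption} enters) says that for every $\alpha\in V_{\lambda_0}$ and every $\hat\fa,\hat\fa'\in\lambda_0^\perp=\HH^2(F,\Z)_{\mathrm{tr}}$ one has $(\alpha\cdot\hat\fa\cdot\hat\fa')_F=e\,b(\hat\fa,\hat\fa')$ with $e\in 2\Z$, whereas the minimality of $\theta$ forces $e=1$. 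This contradiction, using nondegeneracy of $b$ on $\lambda_0^\perp$, proves the first statement.

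For the ``very general'' clause, I would invoke a Noether--Lefschetz style argument: in the $20$-dimensional moduli of primitively polarized hyperk\"ahler manifolds of $K3^{[2]}$-type satisfying Assumption \ref{technical assumption}, the locus of $(F,\lambda_0)$ with $\rho(F)\ge 2$ is a countable union of proper analytic subvarieties, so the very general member has $\Pic(F)=\Z\lambda_0$. Combined with the first part, such $F$ cannot be of Jacobian type.

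There is no real obstacle here: all the work has been done in establishing Proposition \ref{minimal hodge class} and Corollary \ref{cor index 2}. The only point to be slightly careful about is identifying $\HH^2(F,\Z)_{\mathrm{tr}}$ with $\lambda_0^\perp$ in the rank one case, which is immediate from $\mathrm{Alg}^2(F)=\Pic(F)=\Z\lambda_0$ and the fact that $\lambda_0$ is primitive.
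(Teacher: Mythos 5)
Your proposal is correct and follows essentially the same route as the paper: reduce to Theorem \ref{main theorem on generic nonjacodbian}, use Proposition \ref{minimal hodge class} to place a minimal Hodge class in $V_{\lambda_0}$ when $\Pic(F)=\Z\lambda_0$, and contradict Corollary \ref{cor index 2}. The extra details you supply (identifying $\HH^2(F,\Z)_{\mathrm{tr}}$ with $\lambda_0^\perp$ in the rank one case and the Noether--Lefschetz argument for the very general statement) are exactly what the paper leaves implicit.
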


\begin{proof}
If $\Pic(F)=\Z\lambda_0$, then by the proposition a minimal Hodge
class is always in $V_{\lambda_0}=\Span_\Q\{q,\lambda_0^2\}\cap\HH^{2,2}(F)$.
However, by Corollary \ref{cor index 2}, there is no
such minimal Hodge class.
\end{proof}

\begin{prop}\label{birational invariant}
Let $F_1$ and $F_2$ be hyperk\"ahler manifolds of $K3^{[2]}$-type.
If $F_1$ is birational to $F_2$, then $F_1$ has a minimal Hodge
class if and only if $F_2$ does.
\end{prop}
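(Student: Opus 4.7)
The plan is to construct the desired minimal Hodge class on $F_2$ by transporting $\theta_1$ via the birational map's canonical action on $\HH^2$, extended to $\HH^4$ using the symmetric square structure of Lemma \ref{lem sym2}.

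Let $\phi: F_1 \dashrightarrow F_2$ be a birational map. By Huybrechts's structural theorem on bimeromorphic hyperk\"ahler manifolds \cite{huybrechts}, $\phi$ is an isomorphism outside closed subsets of codimension at least two, and induces a canonical isomorphism $\phi_*: \HH^2(F_1,\Z) \xrightarrow{\sim} \HH^2(F_2,\Z)$ of integral Hodge structures preserving the Beauville--Bogomolov form. Consequently, $\phi_*$ restricts to isomorphisms $\Pic(F_1) \cong \Pic(F_2)$ and $\HH^2(F_1,\Z)_{\mathrm{tr}} \cong \HH^2(F_2,\Z)_{\mathrm{tr}}$, and sends $\Omega(F_1)$ into $\Omega(F_2)$ (both $b(\delta,\delta)=-2$ and evenness being $\phi_*$-invariant). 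Using Lemma \ref{lem sym2}(i), I define a rational isomorphism $\Phi: \HH^4(F_1,\Q) \xrightarrow{\sim} \HH^4(F_2,\Q)$ by $\Phi(\rho(\alpha\beta)) := \rho(\phi_*\alpha \cdot \phi_*\beta)$. By Lemma \ref{lem sym2}(iii) (the Fujiki relation) combined with BB-preservation of $\phi_*$, $\Phi$ is an isomorphism of rational Hodge structures compatible with triple intersections:
\[
(\Phi(\theta) \cdot \phi_*\alpha_1 \cdot \phi_*\alpha_2)_{F_2} = (\theta \cdot \alpha_1 \cdot \alpha_2)_{F_1}, \qquad \theta \in \HH^4(F_1,\Q),\; \alpha_i \in \HH^2(F_1,\Q).
\]

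The main technical obstacle is to show that $\Phi$ preserves integral cohomology, i.e., $\Phi(\HH^4(F_1,\Z)) = \HH^4(F_2,\Z)$. This can be verified directly against the explicit integral basis of Theorem \ref{explicit basis II}: fix $\delta \in \Omega(F_1)$ and a basis $\{\hat{\fa}_i\}$ of $\delta^\perp$; the elements $v_0(\delta),\ v_\delta(\hat{\fa}_i),\ \hat{\fa}_i\hat{\fa}_j,\ \delta\hat{\fa}_i$ of $\HH^4(F_1,\Z)$ are defined by formulas \eqref{eq v delta} and \eqref{eq v0} purely in terms of cup products and the BB form. Since $\phi_*$ is BB-preserving and maps $\delta$ to $\phi_*\delta \in \Omega(F_2)$ with $\{\phi_*\hat{\fa}_i\}$ a basis of $(\phi_*\delta)^\perp$ realizing the same intersection matrix, applying $\Phi$ to these generators produces precisely the analogous integral basis of $\HH^4(F_2,\Z)$ from Theorem \ref{explicit basis II} on $F_2$. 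Hence $\Phi$ identifies the integral lattices.

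Given a minimal Hodge class $\theta_1 \in \HH^4(F_1,\Z)$, set $\theta_2 := \Phi(\theta_1) \in \HH^4(F_2,\Z)$; this is integral and Hodge by the above. For $\beta_1,\beta_2 \in \HH^2(F_2,\Z)_{\mathrm{tr}}$, writing $\alpha_i := \phi_*^{-1}(\beta_i) \in \HH^2(F_1,\Z)_{\mathrm{tr}}$,
\[
(\theta_2 \cdot \beta_1 \cdot \beta_2)_{F_2} = (\theta_1 \cdot \alpha_1 \cdot \alpha_2)_{F_1} = b_{F_1}(\alpha_1,\alpha_2) = b_{F_2}(\beta_1,\beta_2),
\]
by the triple-intersection compatibility of $\Phi$, the minimality of $\theta_1$, and BB-preservation of $\phi_*$, respectively. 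Hence $\theta_2$ is a minimal Hodge class on $F_2$, and the reverse implication follows by applying the same argument to $\phi^{-1}$.
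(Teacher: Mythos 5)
Your proposal is correct and follows essentially the same route as the paper: Huybrechts's Hodge isometry on $\HH^2$, extended to $\HH^4$ via the symmetric-square structure, with integrality checked against the explicit basis of Theorem \ref{explicit basis II} (whose elements are defined purely in terms of cup products and the Beauville--Bogomolov form). You have merely spelled out the details that the paper leaves as ``follows easily,'' including the useful observation that $\phi_*$ carries $\Omega(F_1)$ into $\Omega(F_2)$.
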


\begin{rmk}
This Proposition suggests that being of Jacobian type is likely to
be a birational invariant.
\end{rmk}

\begin{proof}
Since $F_1$ and $F_2$ are birational to each other, we have have an isomorphism
$$
\HH^2(F_1,\Z)\cong\HH^2(F_2,\Z)
$$
which is compatible with the Beauville--Bogomolov bilinear forms; see
\cite{huybrechts}. Then Theorem \ref{explicit basis II} implies that
$$
\HH^4(F_1,\Z)\cong\HH^4(F_2,\Z)
$$
which is compatible with the intersections. Then the Proposition
follows easily.
\end{proof}

\section{Hodge classes in degree 4}
Let $F$ be a hyperk\"ahler fourfold of $K3^{[2]}$-type. In this section, we study Hodge classes
in $\HH^4(F)$ for a generic $F$ using a deformation argument. It is proved that all Hodge classes are
generated by the polarization and the Beauville--Bogomolov form when $F$ is very general. This provides a
second proof of the fact that a very general $F$ is not of Jacobian type. Since the Beauville--Bogomolov form
is algebraic, our argument also gives a proof of the Hodge conjecture for very general $F$. In the special case
when $F$ is the variety of lines on a very general cubic fourfold, we are able to prove the integral Hodge
conjecture.

\begin{thm}\label{thm on hodge classes}
Let $(F,\lambda_0)$ be a very general primitively polarized
hyperk\"ahler manifold of $K3^{[2]}$-type. Then the group of integral Hodge classes in degree 4 can be expressed
as
$$
\mathrm{Hdg}^4(F)=V_{\lambda_0}.
$$
In particular, if $\lambda_0$ is odd, then $\mathrm{Hdg}^4(F)$ is
freely generated by $\lambda_0^2$ and $\frac{2}{5}q$; if $\lambda_0$
is even, then $\mathrm{Hdg}^4(F)$ is freely generated by
$\lambda_0^2$ and
$\frac{1}{8}(\lambda_0^2+\frac{2}{5}q)$.
\end{thm}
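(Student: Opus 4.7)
The inclusion $V_{\lambda_0}\subseteq\mathrm{Hdg}^4(F)$ holds on any $F$: $\lambda_0^2$ is algebraic, and $q$ is of Hodge type $(2,2)$ because, via the rational isomorphism $\HH^4(F,\Q)\cong\Sym^2\HH^2(F,\Q)$ of Lemma \ref{lem sym2}, it corresponds to the inverse of the Beauville--Bogomolov form, which is a morphism of Hodge structures. The content of the theorem is the reverse inclusion for very general $(F,\lambda_0)$; the explicit descriptions in the odd and even cases then follow directly from Lemma \ref{lemma on integral classes}(iv),(v). I will take ``very general'' to mean $\Pic(F)=\Z\lambda_0$ (so $\HH^2(F,\Z)_{\mathrm{tr}}=\lambda_0^\perp$, of rank $22$) and that the period $[\sigma]$ avoids every proper Hodge locus in the polarized period domain $D$.

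The approach is an infinitesimal variation of the argument used in Proposition \ref{minimal hodge class}, now exploiting the full variation of $\sigma$ rather than only a subspace. Choose a $b$-orthonormal basis $\{t_1,\dots,t_{22}\}$ of $\lambda_0^\perp\otimes\C$ and, by Lemma \ref{lem sym2}, write any $\theta\in\mathrm{Hdg}^4(F)\otimes\Q$ as
$$\theta=x\,\lambda_0^2+\sum_{i}y_i\,\lambda_0 t_i+\sum_{i,j}z_{ij}\,t_it_j.$$
Let $\sigma$ generate $\HH^{2,0}(F)$ and put $c_i:=b(\sigma,t_i)$, so that $\sum c_i^2=b(\sigma,\sigma)=0$ and $b(\sigma,\lambda_0)=0$. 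The condition that $\theta$ is of type $(2,2)$ is equivalent to $(\theta\cdot\sigma^2)_F=0$ and $(\theta\cdot\sigma\cdot\eta)_F=0$ for every $\eta\in\HH^{1,1}(F)$. Expanding with the Fujiki relation of Lemma \ref{lem sym2}(iii), the first collapses to $\sum z_{ij}c_ic_j=0$, and the second, applied to $\eta=\lambda_0$, collapses to $b(\lambda_0,\lambda_0)\sum y_ic_i=0$. The key globalization is that the locus in $D\subset Q:=\{[v]\in\PP(\lambda_0^\perp\otimes\C):\sum v_i^2=0\}$ on which a fixed $\theta$ remains Hodge is Zariski-closed in $D$, so very-generality of $[\sigma]$ forces this locus to be all of $D$, hence all of $Q$ by Zariski density; consequently both identities hold as polynomial identities on the smooth irreducible quadric $Q\subset\PP^{21}$.

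The homogeneous ideal of $Q$ is principal, generated in degree $2$ by $\sum X_i^2$. Hence the quadratic identity forces the symmetric matrix $(z_{ij})$ to be a scalar multiple of the identity, so $\sum z_{ij}t_it_j=c\cdot q_{\mathrm{tr}}$ where $q_{\mathrm{tr}}=\sum_i t_i^2$; and the linear identity on the quadric has only the trivial solution, forcing $y_i=0$ for every $i$. Since $q=q_{\mathrm{tr}}+b(\lambda_0,\lambda_0)^{-1}\lambda_0^2$, this yields $\theta\in\Q\lambda_0^2+\Q q$, and integrality together with Lemma \ref{lemma on integral classes} then places $\theta$ in $V_{\lambda_0}$. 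The only real difficulty I anticipate is the globalization step ``Hodge at the very general $[\sigma]\Rightarrow$ Hodge identically on $D$''; this is the standard consequence of Hodge loci being countable unions of Zariski-closed subvarieties (a special case of Cattani--Deligne--Kaplan), but must be formulated carefully in the polarized setting. Everything else reduces to elementary linear algebra on the quadric $Q$.
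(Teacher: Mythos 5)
Your proposal is correct, and it reaches the paper's conclusion by a genuinely different mechanism. The paper argues infinitesimally: it identifies the tangent space of the polarized period domain at $[\omega]$ with $\lambda_0^\perp\cap\Lambda^{1,1}$, computes the induced first-order variation $\tilde{\psi}$ of $\HH^{2,2}$ inside $\Sym^2(\Lambda_\C)$, and shows by an explicit matrix computation that the fixed part $\mathrm{Fix}(\HH^{2,2})$ is exactly $\Span_\C\{q,\lambda_0^2\}$, from which the statement for very general $(F,\lambda_0)$ is deduced. You instead globalize: a fixed integral class that is Hodge at a very general period has Hodge locus equal to all of $Q^+_{\lambda_0}$ (countably many Zariski-closed Hodge loci — and in this linear period domain the closedness is elementary, since the locus is cut out by the algebraic conditions $(\theta\cdot\sigma\cdot v)=0$ for $v\in\sigma^{\perp}$, so no appeal to Cattani--Deligne--Kaplan is really needed), whence the identities $\sum z_{ij}c_ic_j=0$ and $\sum y_ic_i=0$ hold on the whole quadric $\sum X_i^2=0$, whose homogeneous ideal is principal; this kills the cross terms and forces $(z_{ij})$ to be scalar. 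Your route trades the paper's tangent-space and fixed-part computation for the Zariski-density step plus the structure of the ideal of a smooth quadric, which is shorter and more transparent; the paper's version yields the slightly stronger information that already the first-order deformation rigidifies $\HH^{2,2}$ down to $\Span_\C\{q,\lambda_0^2\}$. Both arguments conclude identically via Lemma \ref{lemma on integral classes}(iv),(v). Two minor remarks: your standing hypothesis $\Pic(F)=\Z\lambda_0$ is never actually used, since the computation takes place in $\C\lambda_0\oplus(\lambda_0^\perp\otimes\C)$ rather than in the Picard/transcendental decomposition; and although you only exploit the necessary condition $(\theta\cdot\sigma\cdot\eta)=0$ for the single class $\eta=\lambda_0$, that condition, once globalized over the quadric, does suffice, so nothing is lost.
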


\begin{rmk}
 Given Corollary \ref{cor index 2}, the above theorem gives another proof of the fact that if $\lambda_0$ satisfies Assumption \ref{technical assumption} then a very general $F$ is not of Jacobian type.
\end{rmk}

\begin{proof}
Let $\Lambda=\HH^2(F,\Z)$. We will use $\Lambda_{\C}$ to denote $\Lambda\otimes\C$. Let $\mathcal{P}$ be the period map
$$
\mathcal{P}([F])=\C\omega_F\in\PP(\Lambda_\C)
$$
where $\omega_F\in\HH^0(F,\Omega^2_F)$ is a generator. By the Local
Torelli Theorem (see \cite{beauville}), we know that a deformation
of $(F,\lambda_0)$ corresponds to a deformation of $\omega_F$ such
that $\lambda_0\in\Lambda^{1,1}$. Consider
$$
Q_{\lambda_0}^+=\{\omega\in\PP(\Lambda_\C):b(\omega,\omega)=0,b(\omega,\overline{\omega})>0,
b(\lambda_0,\omega)=0\}.
$$
Assume that the period of $(F,\lambda_0)$ is $\omega$. Then we can
easily compute
$$
T_{[\omega],Q_{\lambda_0}^+}=\{\varphi\in\Hom_\C(\C\omega,
\Lambda_\C/\C\omega):b(\omega,\varphi(\omega))=0,b(\lambda_0,\varphi(\omega))=0\}.
$$
By definition, $\varphi(\omega)$ is only a class in
$\Lambda_\C/\C\omega$. Then we define $b(\omega,\varphi(\omega))$ to
be $b(\omega,\omega')$ for any element $\omega'\in\HH^2(F,\Z)$
representing the class $\varphi(\omega)$. Note that this is well
defined since $b(\omega,\omega)=0$. We define
$b(\lambda_0,\varphi(\omega))$ in a similar way. We have the Hodge
decomposition
$$
\Lambda_\C=\Lambda^{2,0}\oplus\Lambda^{1,1}\oplus\Lambda^{0,2}.
$$
Let $\{\alpha_1,\ldots,\alpha_{21}\}$ be a $\C$-basis of
$\Lambda^{1,1}$. Then we can write
$$
 \varphi(\omega)=\sum
 \mu_i\alpha_i+\mu_0\bar{\omega},\quad\mu_i,\mu_0\in\C.
$$
The condition $b(\omega,\varphi(\omega))=0$ implies that $\mu_0=0$.
The condition $b(\lambda_0,\varphi(\omega))=0$, together with its
equivalent form $b(\lambda_0,\overline{\varphi(\omega)})=0$, implies
that $\varphi(\omega)\in\lambda_0^\perp\cap\Lambda^{1,1}$. Hence we
get the following identification
$$
T_{[\omega],Q_{\lambda_0}^+}=\lambda_0^\perp\cap\Lambda^{1,1}.
$$
When $[\omega]$ moves in $Q_{\lambda_0}^+$ along the direction
$\varphi\in T_{[\omega],Q_{\lambda_0}^+}$, the space $\Lambda^{1,1}$
moves in the Grassmannian $G(21,\Lambda_\C)$ along the direction $\psi$, where
$$
\psi\in\Hom_\C(\Lambda^{1,1},\C\omega\oplus\C\bar{\omega})
$$
is the unique element satisfying
\begin{align*}
b(\psi(\alpha_i),\omega)+b(\alpha_i,\varphi(\omega)) &=0,\\
b(\psi(\alpha_i),\bar\omega)+b(\alpha_i,\overline{\varphi(\omega)})
&=0.
\end{align*}
If we write $\psi(\alpha_i)=a_i\omega+b_i\bar\omega$, then the above
conditions imply
\begin{equation*}
 a_i=-b(\alpha_i,\overline{\varphi(\omega)}),\quad
 b_i=-b(\alpha_i,\varphi(\omega)).
\end{equation*}
When $[\omega]$ moves in $Q_{\lambda_0}^+$ along the direction
$\varphi$, the space $\HH^{2,2}(F)$ moves in the Grassmannian
$G(232,\Sym^2(\Lambda_\C))$ along some direction
$\tilde{\psi}\in\Hom_\C(\HH^{2,2},\Sym^2(\Lambda_\C)/\HH^{2,2})$.
Under the natural identification
$$
\HH^{2,2}=\Sym^2(H^{1,1})\oplus\C\omega\wedge\bar\omega,
$$
the homomorphism $\tilde{\psi}$ is given by
\begin{align*}
\tilde{\psi}(\alpha_i\alpha_j) &=\psi(\alpha_i)\wedge\alpha_j+\alpha_i\wedge\psi(\alpha_j),\\
\tilde{\psi}(\omega\wedge\bar\omega)
&=\varphi(\omega)\wedge\bar\omega+\omega\wedge\overline{\varphi(\omega)}.
\end{align*}
We define the fixed part $\mathrm{Fix}(\HH^{2,2})$ of $\HH^{2,2}$ to
be the set of all $x\in\HH^{2,2}$ such that for all $\tilde{\psi}$
associated to some $\varphi\in T_{[\omega],Q_{\lambda_0}^+}$, we
have $\tilde{\psi}(x)=0$. Our next step is to determine
$\mathrm{Fix}(\HH^{2,2})$ explicitly. Let
$\varphi(\omega)=\sum\mu_i\alpha_i$ and $A=(a_{ij})$ be the
``intersection" matrix on $\HH^{1,1}$, i.e.
$a_{ij}=b(\alpha_i,\alpha_j)$. Let $T=(t_{ij})$ be the matrix
representing the complex conjugation on $\HH^{1,1}$, i.e.
$$
\bar\alpha_i=\sum_{j=1}^{21} t_{ij}\alpha_j.
$$
Then we get
\begin{align*}
 a_i &=-b(\alpha_i,\overline{\varphi(\omega)})\\
  &=-b(\alpha_i,\sum\bar\mu_j\bar\alpha_j)\\
  &=-\sum\bar\mu_j t_{jk}b(\alpha_i,\alpha_k)\\
  &=-\sum_{j,k}a_{ik}t_{jk}\bar\mu_j,\\
 b_i &=-b(\alpha_i,\varphi(\omega))=-\sum a_{ij}\mu_j.
\end{align*}
Let $\mathbf{a}=(a_1,\ldots,a_{21})^t$ and
$\mathbf{b}=(b_1,\ldots,b_{21})^t$. Then the above equations can be
written as
$$
\mathbf{a}=-A\,^tT\overline{\mu}, \qquad \mathbf{b}=-A\mu,
$$
where $\mu=(\mu_1,\ldots,\mu_{21})^t$. Assume that
$$
x=\sum_{i,j}c_{ij}\alpha_i\alpha_j+ c_0\omega\wedge\bar\omega\in
\mathrm{Fix}(\HH^{2,2}),
$$
where $C=(c_{ij})$ is a symmetric matrix. Then we have
\begin{align*}
\tilde{\psi}(x) &=\sum_{i,j}c_{ij}\tilde\psi(\alpha_i\alpha_j) +
c_0\tilde\psi(\omega\wedge\bar\omega)\\
 &=\sum_{ij}c_{ij}(\alpha_i\wedge\psi(\alpha_j)+\psi(\alpha_i)\wedge\alpha_j)
  +c_0(\varphi(\omega)\wedge\bar\omega+\omega\wedge\overline{\varphi(\omega)})\\
 &=2(^t\alpha C\psi(\alpha)) + c_0(\varphi(\omega)\wedge\bar\omega + \omega\wedge\overline{\varphi(\omega)}
  )\\
 &=2 [\,^t\alpha C(\mathbf{a}\omega +\mathbf{b}\bar{\omega})] + c_0(\,^t\alpha\mu\wedge\bar\omega+
 \,^t\bar\alpha\bar\mu\wedge\omega)\\
 &=\,^t\alpha(-2CA(^tT)\bar\mu +c_0(^tT)\bar\mu)\omega +
 \,^t\alpha(-2CA\mu+c_0\mu)\bar\omega
\end{align*}
where $\alpha=(\alpha_1,\ldots,\alpha_{21})^t$ and
$\psi(\alpha)=(\psi(\alpha_1),\ldots,\psi(\alpha_{21}))^t$. Hence we
get
\begin{align*}
x\in\mathrm{Fix}(\HH^{2,2}) &\Leftrightarrow \begin{cases}
  (c_0-2CA)\mu=0\\
  (c_0-2CA)^tT\bar\mu=0
 \end{cases}\forall \varphi(\omega)\in\lambda_0^\perp\cap\Lambda^{1,1}\\
 &\Leftrightarrow (c_0-2CA)\mu=0,\quad\forall \varphi(\omega)\in\lambda_0^\perp\cap\Lambda^{1,1}.
\end{align*}
We write $\lambda_0=\,^t\alpha \mathbf{s}$, where
$\mathbf{s}=(s_1,\ldots,s_{21})$, then
$$
\varphi(\omega)\in \lambda_0^\perp\cap\Lambda^{1,1} \Leftrightarrow
\;^t\mathbf{s}A\mu=0.
$$
Hence we have the following equivalences
\begin{align*}
x\in\mathrm{Fix}(\HH^{2,2}) &\Leftrightarrow
(c_0-2CA)\mu=0,\quad\forall\mu \text{ with }^t\mathbf{s}A\mu=0\\
 &\Leftrightarrow c_0-2CA=\mathbf{t}\,^t\mathbf{s}A,\text{ for some
 }\mathbf{t}=(t_1,\ldots,t_{21})^t\\
 &\Leftrightarrow
 C=\frac{1}{2}c_0A^{-1}-\frac{1}{2}\mathbf{t}\,^t\mathbf{s}.
\end{align*}
Since $C$ is symmetric, the vector $\mathbf{t}$ is a multiple of
$\mathbf{s}$. Hence $x\in\mathrm{Fix}(\HH^{2,2})$ if and only if
$$
C=c_1B+c_2\mathbf{s}\,^t\mathbf{s},\quad c_0=2c_1,
$$
for some $c_1,c_2\in\C$, where $B=A^{-1}$, i.e.
\begin{align*}
x &=\,^t\alpha(c_1B+c_2\mathbf{s}\,^t\mathbf{s})\alpha +
2c_1\omega\wedge\bar\omega\\
&=c_1(\,^t\alpha B\alpha+2\omega\wedge\bar\omega) +c_2\lambda_0^2\\
&=c_1q+c_2\lambda_0^2.
\end{align*}
Hence we get $\mathrm{Fix}(\HH^{2,2})=\Span_\C\{q,\lambda_0^2\}$.
This computation implies that on a very general deformation of
$(F,\lambda_0)$, the classes that remain of $(2,2)$-type are
generated by $q$ and $\lambda_0^2$. This implies that for a very
general $(F,\lambda_0)$, we have
$$
\mathrm{Hdg}^4(F)=\Span_\Q\{q,\lambda_0^2\}\cap\HH^4(F,\Z).
$$
Together with Lemma \ref{lemma on integral classes}, this proves
the theorem.
\end{proof}

\begin{cor}
Let $(F,\lambda_0)$ be very general as in the theorem, then
$\lambda_0$ is odd (resp. even) if and only if the image of
$\mathrm{Hdg}^4(F)$ in $\mathcal{T}^4(F)$ is a cyclic group of order
5 (resp. 10).
\end{cor}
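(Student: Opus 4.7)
The plan is to simply take the explicit generators of $\mathrm{Hdg}^4(F)$ produced by Theorem \ref{thm on hodge classes} and compute their images in $\mathcal{T}^4(F)=\HH^4(F,\Z)/\rho(\Sym^2\Lambda)$ by hand, using Lemma \ref{lemma on integral classes}(iii) and the splitting of the exact sequence in Theorem \ref{thm on T^4}(i). Since a primitive $\lambda_0$ is either odd or even, the two directions of ``if and only if'' will follow from the two direct computations.

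First I would observe the universal cancellation: $\lambda_0^2=\rho(\lambda_0\cdot\lambda_0)$ and $\delta^2=\rho(\delta\cdot\delta)$ both lie in $\rho(\Sym^2\Lambda)$, hence vanish in $\mathcal{T}^4(F)$. Combined with Lemma \ref{lemma on integral classes}(iii), which gives $\tfrac{2}{5}q = 8v_0(\delta)-\delta^2$, this shows that the image of $\tfrac{2}{5}q$ in $\mathcal{T}^4(F)$ equals $8\bar v_0(\delta)$. Fix an exceptional class $\delta\in\Omega(F)$; by Theorem \ref{thm on T^4}(i) the element $\bar v_0(\delta)$ has order $10$.

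In the \emph{odd} case, Theorem \ref{thm on hodge classes} says $\mathrm{Hdg}^4(F)=\Z\lambda_0^2\oplus\Z(\tfrac{2}{5}q)$, so its image in $\mathcal{T}^4(F)$ is generated by $8\bar v_0(\delta)$, which is cyclic of order $10/\gcd(8,10)=5$.

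In the \emph{even} case, the second generator is $\tfrac{1}{8}(\lambda_0^2+\tfrac{2}{5}q)$. I would rewrite this as
\[
\tfrac{1}{8}(\lambda_0^2+\tfrac{2}{5}q)=\tfrac{\lambda_0^2-\delta^2}{8}+v_0(\delta),
\]
which is an integral class by Lemma \ref{lemma on even classes}. Writing $\lambda_0=2\hat{\fa}+(2c+1)\delta$ with $\hat{\fa}\in\delta^\perp$ (this decomposition already appeared in the proof of Lemma \ref{lemma on even classes}), the identity
\[
\lambda_0^2-\delta^2=8v_\delta(\hat{\fa})+8(c+1)\delta\hat{\fa}+4c(c+1)\delta^2
\]
shows that $\tfrac{\lambda_0^2-\delta^2}{8}=v_\delta(\hat{\fa})+(c+1)\delta\hat{\fa}+\tfrac{c(c+1)}{2}\delta^2$. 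The last two summands lie in $\rho(\Sym^2\Lambda)$, so the image of $\tfrac{1}{8}(\lambda_0^2+\tfrac{2}{5}q)$ in $\mathcal{T}^4(F)$ equals $\bar v(\hat{\fa})+\bar v_0(\delta)$. Using the exact sequence
\[
0\longrightarrow\mathcal{K}^2(F)\longrightarrow\mathcal{T}^4(F)\longrightarrow\Z/10\Z\longrightarrow 0
\]
of Theorem \ref{thm on T^4}(i): the element $\bar v(\hat{\fa})\in\mathcal{K}^2(F)$ is $2$-torsion and $\bar v_0(\delta)$ maps to the generator $1\in\Z/10\Z$, hence $\bar v(\hat{\fa})+\bar v_0(\delta)$ maps to $1\in\Z/10\Z$. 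So its order is a multiple of $10$; at the same time $10$ annihilates both summands, so its order is exactly $10$, and the image is cyclic of order $10$.

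The only slightly delicate point is verifying that in the even case the group generated by $\bar v(\hat{\fa})+\bar v_0(\delta)$ is genuinely cyclic of order $10$ rather than accidentally smaller; this is precisely the use of the splitting in Theorem \ref{thm on T^4}(i), which immediately rules out $5$-torsion. The rest is pure bookkeeping with the formulas already proved.
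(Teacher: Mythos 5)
Your proposal is correct and follows essentially the same route as the paper: both arguments send the generators of $\mathrm{Hdg}^4(F)$ from Theorem \ref{thm on hodge classes} into $\mathcal{T}^4(F)$, use $\tfrac{2}{5}q=8v_0(\delta)-\delta^2$ to identify the image of $\tfrac{2}{5}q$ with $8\bar v_0(\delta)$ (order $5$ in the odd case), and in the even case decompose $\tfrac{1}{8}(\lambda_0^2+\tfrac{2}{5}q)$ as $\tfrac{1}{8}(\lambda_0^2-\delta^2)+v_0(\delta)$ to get an element of order $10$. You are merely a bit more explicit than the paper in justifying that the $2$-torsion contribution from $\tfrac{1}{8}(\lambda_0^2-\delta^2)$ cannot lower the order below $10$, via the splitting of Theorem \ref{thm on T^4}(i); the only cosmetic slip is attributing the order-$10$ statement for $\bar v_0(\delta)$ to Theorem \ref{thm on T^4}(i) rather than Lemma \ref{lem v_0}(ii).
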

\begin{proof}
In any case, we always have $\lambda_0^2\mapsto
0\in\mathcal{T}^4(F)$. Since $\frac{2}{5}q=8v_0(\delta)-\delta^2$,
we see that $\frac{2}{5}q\mapsto 8\bar{v}_0$. If $\lambda_0$ is odd,
then the image of $\mathrm{Hdg}^4(F)$ in $\mathcal{T}^4(F)$ is
generated by $8\bar{v}_0$, which is an element of order 5. To
conclude the second case, we only need to note the fact that for any
even $\lambda_0$, the image of $\frac{1}{8}(\lambda_0^2-\delta^2)$
in $\mathcal{T}^4(F)$ is an element of order 2.
\end{proof}

\begin{cor}
Let $(F,\lambda_0)$ be a very general polarized hyperk\"ahler
manifold of $K3^{[2]}$-type. \\
(i) The Hodge conjecture holds true for $F$.\\
(ii) Let $\mathcal{Z}^4(F)=\mathrm{Hdg}^4(F)/\mathrm{Alg}^4(F)$. If
$\lambda_0$ is odd, then there is a surjection
$\Z/3\Z\twoheadrightarrow\mathcal{Z}^4(F)$; if $\lambda_0$ is even,
then there is a surjection
$\Z/24\Z\twoheadrightarrow\mathcal{Z}^4(F)$.
\end{cor}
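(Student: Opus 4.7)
The plan is to exhibit two algebraic classes that rationally span the full space of Hodge classes identified in Theorem \ref{thm on hodge classes}, namely $\lambda_0^2$ (tautologically algebraic) and the second Chern class $c_2(F)$ (algebraic as a Chern class of the tangent bundle). Part (i) will be immediate once these classes are shown to $\Q$-span $V_{\lambda_0}\otimes\Q$, and for (ii) it will remain to locate $c_2(F)$ explicitly inside $V_{\lambda_0}$ and compare with the integral bases from Lemma \ref{lemma on integral classes}.

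The central computation is to recognise $c_2(F)$ as a rational multiple of $q$. For any hyperk\"ahler fourfold $F$ of $K3^{[2]}$-type, one has the Fujiki-type identity
$$
\int_F c_2(F)\cdot\alpha^2 \;=\; 30\,b(\alpha,\alpha),\qquad\forall\alpha\in\HH^2(F,\Z),
$$
which follows by Hirzebruch--Riemann--Roch applied to the known Hilbert polynomial $\chi(F,L)=\binom{b(L,L)/2+3}{2}$, using $c_1(F)=0$ so that the Todd class reduces to $1+c_2/12+(3c_2^2-c_4)/720$ and only the $c_1(L)^2c_2/24$ term contributes to the coefficient of $b(L,L)$. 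Polarising and comparing with Lemma \ref{lemma on integral classes}(ii), which states $(q\cdot\alpha\cdot\beta)_F=25\,b(\alpha,\beta)$, we see that $c_2(F)-\tfrac{6}{5}q$ pairs trivially with every product $\alpha\beta$ where $\alpha,\beta\in\HH^2(F,\Z)$. Since $\Sym^2\HH^2(F,\Q)\to\HH^4(F,\Q)$ is an isomorphism by Lemma \ref{lem sym2}(i) and the intersection form on $\HH^4(F,\Q)$ is nondegenerate, this forces
$$
c_2(F) \;=\; \tfrac{6}{5}\,q \;=\; 3\cdot\tfrac{2}{5}q\qquad\text{in }\HH^4(F,\Q).
$$

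For (i): by Theorem \ref{thm on hodge classes} we have $\mathrm{Hdg}^4(F)\otimes\Q = V_{\lambda_0}\otimes\Q = \Span_\Q\{\lambda_0^2,q\}$, and both $\lambda_0^2$ and $c_2(F)$ lie in $\mathrm{Alg}^4(F)$, so every rational Hodge class is algebraic. For (ii): $\mathcal{Z}^4(F)$ is a quotient of $V_{\lambda_0}/(\Z\lambda_0^2+\Z\,c_2(F))$. When $\lambda_0$ is odd, Lemma \ref{lemma on integral classes}(iv) gives $V_{\lambda_0}=\Z\lambda_0^2\oplus\Z\cdot\tfrac{2}{5}q$, and $c_2(F)=3\cdot\tfrac{2}{5}q$ projects to $3$ in the second factor, yielding the surjection $\Z/3\Z\twoheadrightarrow\mathcal{Z}^4(F)$. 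When $\lambda_0$ is even, setting $\eta=\tfrac{1}{8}(\lambda_0^2+\tfrac{2}{5}q)$, Lemma \ref{lemma on integral classes}(v) gives $V_{\lambda_0}=\Z\lambda_0^2\oplus\Z\eta$, and rewriting $c_2(F)=24\eta-3\lambda_0^2$ shows that $c_2(F)$ projects to $24$ in the $\eta$-factor, yielding $\Z/24\Z\twoheadrightarrow\mathcal{Z}^4(F)$.

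The only substantive input is the Fujiki-type identification $c_2(F)=\tfrac{6}{5}q$ and, in particular, pinning down the constant $30$ for $K3^{[2]}$-type; once this is available, both statements reduce to routine bookkeeping with the integral bases of $V_{\lambda_0}$ given by Lemma \ref{lemma on integral classes}.
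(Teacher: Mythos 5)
Your proposal is correct, and its overall architecture coincides with the paper's: both reduce everything to showing that $q$ is a rational multiple of the algebraic class $c_2(F)$, and then both carry out the same integral bookkeeping in the bases of $V_{\lambda_0}$ from Lemma \ref{lemma on integral classes}(iv),(v) to get the constants $3$ and $24$. The difference lies in how the key identity $c_2(F)=\tfrac{6}{5}q$ (equivalently $\tfrac{2}{5}q=\tfrac{1}{3}c_2(F)$) is obtained. The paper quotes the explicit formula $c_2(F)=24v_0-3\delta^2$ on $S^{[2]}$ from an external reference and combines it with $\tfrac{2}{5}q+\delta^2=8v_0(\delta)$ from Lemma \ref{lemma on integral classes}(iii). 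You instead extract $\int_F c_2(F)\cdot\alpha^2=30\,b(\alpha,\alpha)$ from the Hirzebruch--Riemann--Roch expansion against the known Hilbert polynomial $\chi(F,L)=\binom{b(L,L)/2+3}{2}$, and then conclude by nondegeneracy of the intersection form together with the surjectivity of $\Sym^2\HH^2(F,\Q)\to\HH^4(F,\Q)$ from Lemma \ref{lem sym2}(i); your constant $30$ checks out against $(q\cdot\alpha\cdot\beta)_F=25\,b(\alpha,\beta)$. Your route trades one external input (the Chern class formula on $S^{[2]}$) for another (the Hilbert polynomial of $K3^{[2]}$-type fourfolds), but it has the mild advantage of working directly on an arbitrary deformation without invoking the reduction to the Hilbert scheme case, and it makes the provenance of the coefficient $\tfrac{6}{5}$ transparent. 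Both arguments are complete and yield the same conclusion.
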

\begin{proof}
To prove the Hodge conjecture for $F$, we only need to do this in
degree 4. By the computation carried out in the proof of the
theorem, we only need to show that $q$ is algebraic. We will need
the following explicit formula of the Chern classes of $F$ in the special
case $F=S^{[2]}$,
$$
 c_2(F)= 24 v_0-3\delta^2.
$$
See \cite[Lemma 9.3]{fourier}. This implies that $\frac{2}{5}q=\frac{1}{3}c_2(F)$. When $\lambda_0$
is odd, then $\mathcal{Z}^4(F)$ is generated by the image of
$\frac{1}{3}c_2(F)$. When $\lambda_0$ is even, then
$\mathcal{Z}^4(F)$ is generated by the image of
$\frac{1}{24}c_2(F)+\frac{1}{8}\lambda_0^2$. This proves the
corollary.
\end{proof}

We consider a special family of polarized
hyperk\"ahler manifolds. Let $X\subset\PP^5_\C$ be a smooth cubic
fourfold. Let $F=F(X)$ be the variety of lines on $X$. It is known
that $F$ is a hyperk\"ahler manifold of $K3^{[2]}$-type; see \cite[Proposition 2]{bd}. We have a natural
inclusion $F\subset
G(2,6)$. Let $\mathscr{E}$ be the restriction of the rank 2 quotient
bundle on the Grassmannian of lines in $\PP^5$. Set $g_1=c_1(\mathscr{E})$ and
$g_2=c_2(\mathscr{E})$. We take $\lambda_0=g_1$ to be the natural polarization on $F$. Recall from Hassett
\cite{hassett} that a cubic fourfold $X$ is special if $\mathrm{Hdg}^4(X)$ has rank at least two. Beauville-Donagi
\cite[Proposition 4]{bd} proved that the Abel--Jacobi homomorphism $\Phi:\HH^4(X,\Z)\rightarrow \HH^2(F,\Z)$ is an
isomorphism of Hodge structures. Hence $X$ is special if and only if the Picard rank of $F$ is at least two.

\begin{proof}[Proof of Proposition \ref{prop jacobian implies special}]
We only need to verify Assumption \ref{technical assumption} for one cubic fourfold $X$. We take $X$ to be general Pfaffian cubic fourfold. Then $F=S^{[2]}$ for some $K3$ surface $S$ of degree $14$. Let $\delta\in\Pic(F)$ be the boundary divisor and $\fb\in\Pic(S)$ be the polarization with $(\fb\cdot\fb)_S=14$. Then under the natural orthogonal decomposition
$$
\HH^2(F,\Z) = \HH^2(S,\Z) \oplus \Z\delta,\qquad b(\delta,\delta)=-2,
$$
the polarization $\lambda_0$ can be written as $\lambda_0 = 2\fb -5\delta$; see the proof of \cite[Proposition 6]{bd}. Then Assumption \ref{technical assumption} is readily verified. Then by Corollary \ref{jacobian picard rank}, we have
$\rk \Pic(F)\geq 2$. Hence $X$ is special.
\end{proof}

\begin{thm}\label{integral hodge cubic fourfolds}
Let $X$ be a very general cubic fourfold. Then the following are
true.\\
(i) The variety of lines $F=F(X)$ is not of Jacobian type.\\
(ii) The integral Hodge classes $\mathrm{Hdg}^4(F)$ is freely
generated by $g_2$ and $\frac{1}{3}(g_1^2-g_2)$.\\
(iii) The integral Hodge conjecture holds true for $F$ in degree 4.
\end{thm}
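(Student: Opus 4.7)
The plan is to derive (i)--(iii) from Theorem~\ref{thm on hodge classes} together with explicit Chern-class data on $F(X)\subset G(2,6)$. Part (i) is immediate: Proposition~\ref{prop jacobian implies special} shows $\lambda_0=g_1$ satisfies Assumption~\ref{technical assumption}, and for very general $X$ the Beauville--Donagi isomorphism $\HH^4(X,\Z)_{\mathrm{prim}}\cong\HH^2(F,\Z)_{\mathrm{prim}}$ forces $\Pic(F)=\Z g_1$; Corollary~\ref{jacobian picard rank} then applies directly.

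For part (ii), I would apply Theorem~\ref{thm on hodge classes} with $\lambda_0=g_1$. Here $b(g_1,g_1)=6$ (from Fujiki and $g_1^4=108$), and $g_1$ is even, as one checks on the Pfaffian model $F=S^{[2]}$ where $g_1=2\fb-5\delta$ so that $b(g_1,\cdot)\in 2\Z$. Since $(10+6)/8=2$ is even, the theorem gives
\[
\mathrm{Hdg}^4(F) \;=\; \Z g_1^2 \,\oplus\, \Z\cdot\tfrac{1}{8}\bigl(g_1^2+\tfrac{2}{5}q\bigr).
\]
Applying the splitting principle to the normal bundle sequence $0\to T_F\to T_{G(2,6)}|_F\to\Sym^3\mathscr{E}\to 0$ yields $c_2(T_F)=5g_1^2-8g_2$, and combining with the identity $\tfrac{2}{5}q=\tfrac{1}{3}c_2(T_F)$ from the corollary following Theorem~\ref{thm on hodge classes} gives
\[
\tfrac{1}{8}\bigl(g_1^2+\tfrac{2}{5}q\bigr) \;=\; \tfrac{1}{3}(g_1^2-g_2).
\]
The change of basis $g_1^2=g_2+3\cdot\tfrac{1}{3}(g_1^2-g_2)$ then yields the stated integral basis $\{g_2,\tfrac{1}{3}(g_1^2-g_2)\}$.

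Part (iii) is the main content. Algebraicity of $g_2=[F(X\cap H)]$ for a general hyperplane $H$ is clear (Fano surface of a cubic threefold). For $\tfrac{1}{3}(g_1^2-g_2)$ I would use the surface $S_\ell\subset F$ of lines on $X$ meeting a fixed general line $\ell\subset X$, together with the identity
\[
[S_\ell]\cdot\alpha\cdot\beta \;=\; 2\,b(\alpha,\beta),\qquad \forall\,\alpha,\beta\in\HH^2(F,\Z)_{\mathrm{tr}}
\]
recalled in the introduction. The intersection numbers $g_1^2g_2=45$ (Plücker degree of $F(Y)$ for $Y$ a cubic threefold) and $g_2^2=27$ (lines on a cubic surface), combined with the basis of (ii), force $g_2=\tfrac{5}{8}g_1^2-\tfrac{3}{20}q$ in $\HH^4(F,\Q)$, so Fujiki gives $g_2\cdot\alpha\cdot\beta=(6\cdot\tfrac{5}{8}-25\cdot\tfrac{3}{20})b(\alpha,\beta)=0$ on $\HH^2(F,\Z)_{\mathrm{tr}}$. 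Writing $[S_\ell]=p\,g_2+k\cdot\tfrac{1}{3}(g_1^2-g_2)$ in the integral basis and pairing against $\alpha\beta$ for transcendental $\alpha,\beta$ then forces $k=1$, because $\tfrac{1}{3}(g_1^2-g_2)\cdot\alpha\cdot\beta=2b(\alpha,\beta)$. Hence $\tfrac{1}{3}(g_1^2-g_2)=[S_\ell]-p\,g_2$ is algebraic.

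The main obstacle is step (iii): a priori there is no reason for the fractional generator $\tfrac{1}{3}(g_1^2-g_2)$, which lies outside the index-$3$ sublattice $\Z g_1^2+\Z g_2\subset\mathrm{Hdg}^4(F)$, to be representable by an algebraic cycle. The surface $S_\ell$ succeeds precisely because the transcendental pairing $2b(\alpha,\beta)$ that it produces cannot be realized by any integer combination of $g_1^2$ (pairing $6b$) and $g_2$ (pairing $0$); this mismatch is exactly the obstruction that detects the missing generator modulo the obvious algebraic sublattice.
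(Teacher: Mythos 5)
Your proposal is correct and follows the same skeleton as the paper --- part (i) via Proposition \ref{prop jacobian implies special} and Corollary \ref{jacobian picard rank}, part (ii) via Theorem \ref{thm on hodge classes} after identifying $\frac{1}{8}(g_1^2+\frac{2}{5}q)$ with $\frac{1}{3}(g_1^2-g_2)$, and part (iii) via the surface $S_\ell$ of lines meeting a fixed general line --- but the two substantive steps are carried out by genuinely different means. For (ii), the paper derives the relation $\frac{1}{25}q=\frac{1}{6}g_1^2-\frac{4}{15}g_2$ by intersecting against $\HH^2(F,\Z)_{\mathrm{tr}}$: it first proves $g_2\cdot\fa\cdot\fa'=0$ geometrically, via the secant-line count of \cite{relations} applied to the Fano surface of a hyperplane section, and then combines this with $g_2g_1^2=45$ and $g_1^4=108$. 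You instead obtain the same relation from the Chern-class identities $c_2(T_F)=5g_1^2-8g_2$ and $\frac{2}{5}q=\frac{1}{3}c_2(F)$ (the latter being exactly what the paper's corollary to Theorem \ref{thm on hodge classes} extracts from \cite{fourier}), and you deduce $g_2\cdot\alpha\cdot\beta=0$ as a consequence rather than as an input; the logical order is reversed but both computations are sound, and your additional appeal to $g_1^2g_2=45$, $g_2^2=27$ is redundant since the relation is already fixed by the Chern-class identity. For (iii), the paper simply cites Voisin for the precise cycle identity $[S_\ell]=\frac{1}{3}(g_1^2-g_2)$, whereas you use only the weaker pairing $[S_\ell]\cdot\alpha\cdot\beta=2b(\alpha,\beta)$ on the transcendental lattice and recover the coefficient of $\frac{1}{3}(g_1^2-g_2)$ in $[S_\ell]$ by expanding in the integral basis from (ii); this is a self-contained alternative that avoids the external reference, at the cost of not pinning down the class of $S_\ell$ exactly. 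Both routes yield the same conclusion; yours is more computational in (ii) and more lattice-theoretic in (iii), while the paper's is more geometric in (ii) and relies on a quoted identity in (iii).
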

\begin{proof}
(i) Follows from the above proposition since a very general $X$ is not special. To prove (ii), we need to write $q$ explicitly in terms of $g_1$ and
$g_2$. Let $\Phi:\HH^4(X,\Z)\to\HH^2(F,\Z)$ be the Abel-Jacobi
isomorphism. Then the restriction of $\Phi$ to the
transcendental classes gives an isomorphism
$$
\Phi_0:\HH^4(X,\Z)_{\mathrm{tr}}\rightarrow\HH^2(F,\Z)_{\mathrm{tr}}
$$
which satisfies $b(\Phi(\alpha),\Phi(\beta))=-\alpha\cdot\beta$ for
all $\alpha,\beta\in\HH^4(X,\Z)_{\mathrm{tr}}$; see \cite[Proposition 6]{bd}. One
relation that we need is
\begin{equation*}
g_2\cdot\fa\cdot\fa'=0,\quad\forall\fa,\fa'\in\HH^2(F,\Z)_{\mathrm{tr}}.
\end{equation*}
This can be seen from the geometry. Let $Y\subset X$ be a general
hyperplane section, which is a smooth cubic threefold. Then $g_2$ is
represented by the surface of lines on $Y$. For any transcendental
classes $\fa$ and $\fa'$, we can find the corresponding
transcendental classes $\alpha,\alpha'\in\HH^4(X,\Z)_{\mathrm{tr}}$
such that $\fa=\Phi(\alpha)$ and $\fa'=\Phi(\alpha')$. When $\alpha$
is transcendental, we always have $\deg(\alpha|_Y)=0$. By the formula of
the number of secant lines of a pair of curves on a cubic
hypersurface obtained in \cite[Lemma 3.10]{relations}, we have
$$
g_2\cdot\fa\cdot\fa'=5\deg(\alpha|_Y)\deg(\alpha'|_Y)=0.
$$
Similarly, we have $g_2\cdot g_1\cdot\fa=0$ and $g_2g_1^2 =45$. The
self-intersection of $g_1$ is given by
$$
g_1^4=3b(g_1,g_1)^2=108.
$$
Based on these identities, we get
$$
(\frac{1}{6}g_1^2-\frac{4}{15}g_2)\cdot\fa\cdot\fb=b(\fa,\fb),\quad\forall
\fa,\fb\in\HH^2(F,\Z).
$$
This implies that $\frac{1}{6}g_1^2-\frac{4}{15}g_2=\frac{1}{25}q$.
Hence we have
$$
\frac{1}{8}(\frac{2}{5}q+g_1^2)=\frac{1}{3}(g_1^2-g_2).
$$
Then (ii) follows from Theorem \ref{thm on hodge classes} and the fact that $g_1$ is even.

Note that $\frac{1}{3}(g_1^2-g_2)$ is represented by the surface of
all lines meeting a given line, see \cite[\S0]{voisin}. This proves (iii).
\end{proof}

\section{Rational cubic fourfolds}
In this section, we give some evidence of our Conjecture
\ref{rationality conjecture}. We first state some well-known
formulas for blow-ups. We then show that rationality of a fourfold
implies the existence of a Hodge theoretically special family of
rational curves parametrized by a surface. Using the techniques and
constructions of \cite{relations}, we were able to relate this to
the variety of lines and prove Theorem \ref{thm on rationality imply
potential}.
\subsection{Blow up formulas}
Let $Y$ be a smooth projective variety of dimension 4 and $Z\subset
Y$ a smooth closed subvariety. Let $\tilde{Y}=\mathrm{Bl}_Z(Y)$ be
the blow up of $Y$ along $Z$ and $E\subset\tilde{Y}$ the exceptional
divisor. Then we have the following commutative diagram
$$
\xymatrix{
 E\ar[r]^j\ar[d]_\pi &\tilde{Y}\ar[d]^\sigma\\
 Z\ar[r]^i & Y
}
$$
\begin{prop}\label{blow up formulas}
Let $Y$, $Z$ and $\tilde{Y}$ be as above. We use $\hat\oplus$ to
denote orthogonal decomposition. Then the following are
true.\\
(i) If $Z$ is a point, then
$$
\HH^4(\tilde{Y},\Z)\cong\HH^4(Y,\Z)\hat\oplus \Z[\PP^2],
$$
where the extra class $[\PP^2]$ is the class of a linear $\PP^2$ in
the exceptional divisor $E\cong\PP^3$. Furthermore we have
$[\PP^2]^2=-1$.\\
(ii) If $Z$ is a curve, then
$$
\HH^4(\tilde{Y},\Z)=\HH^4(Y,\Z)\hat\oplus (\Z j_*\xi\oplus\Z j_*f),
$$
where $\xi$ is the class of the relative $\calO(1)$-bundle of
$\pi:E=\PP(\mathscr{N}_{Z/Y})\to Z$ and $f$ is the class of a fiber
of $\pi$. The intersection matrix of $j_*\xi$ and $j_*f$ is
$$
\begin{pmatrix}
d &-1\\
-1 &0
\end{pmatrix},
$$
where $d=\deg(\mathscr{N}_{Z/Y})$.\\
(iii) If $Z$ is a surface, then we have a canonical identification
$$
\HH^4(\tilde{Y},\Z)=\HH^4(Y,\Z)\hat\oplus\HH^2(Z,\Z)(-1),
$$
where $\HH^2(Z,\Z)\to\HH^4(\tilde{Y},\Z)$ is given by $\fa\mapsto
-j_*\pi^*\fa$ and the intersection form is given by
$$
j_*\pi^*\fa\cdot j_*\pi^*\fa'=-(\fa\cdot\fa')_Z.
$$
The projection $\HH^4(\tilde{Y},\Z)\to\HH^2(Z,\Z)$ is given by
$\alpha\mapsto \pi_*j^*\alpha$.
\end{prop}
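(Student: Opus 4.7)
The plan is to deduce all three cases from the standard blow-up formula for cohomology: for $\sigma: \tilde{Y}\to Y$ the blow up of a smooth variety along a smooth subvariety $Z$ of codimension $c$, with $E=\PP(\mathscr{N}_{Z/Y})\xrightarrow{\pi}Z$ and $\xi=c_1(\calO_E(1))$, one has a canonical decomposition
\begin{equation*}
\HH^k(\tilde{Y},\Z)\;=\;\sigma^*\HH^k(Y,\Z)\ \oplus\ \bigoplus_{i=0}^{c-2}j_*\bigl(\xi^i\cdot\pi^*\HH^{k-2-2i}(Z,\Z)\bigr),
\end{equation*}
which is a (Tate-twisted) isomorphism of Hodge structures. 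Specializing $k=4$ to the three codimensions $c=4,3,2$ produces precisely the extra summands in (i), (ii), (iii): for a point only $i=1$ contributes (giving $\Z\cdot j_*\xi$); for a curve both $(i,j)=(1,0)$ and $(0,1)$ contribute, yielding $\Z j_*\xi\oplus\Z j_*(\pi^*[pt])=\Z j_*\xi\oplus\Z j_*f$; for a surface only $i=0$, $j=1$ contributes, giving $j_*\pi^*\HH^2(Z,\Z)$, with the minus sign in $\fa\mapsto -j_*\pi^*\fa$ being a cosmetic choice to clean up the intersection form.

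The intersection computations are all driven by two identities: the projection formula $j_*\alpha\cdot\gamma=j_*(\alpha\cdot j^*\gamma)$, and the self-intersection relation $j^*j_*\alpha=-\xi\cdot\alpha$, which follows from $\mathscr{N}_{E/\tilde{Y}}=\calO_E(-1)$. Combined with fiber integration $\int_{\PP^{c-1}}\xi^{c-1}=1$, they give:
\begin{itemize}
\item Case (i): $E\cong\PP^3$, $\xi$ is the hyperplane class (so $j_*\xi=[\PP^2]$), and $(j_*\xi)^2=j_*(\xi\cdot(-\xi)\xi)=-j_*(\xi^3)=-1$.
\item Case (ii): $(j_*f)^2=0$ since $[pt]_C^2=0$; $(j_*\xi)(j_*f)=-j_*(\xi^2\cdot\pi^*[pt])=-1$ because $\xi^2$ restricts to the point class on each fiber $\PP^2$; and $(j_*\xi)^2=-j_*(\xi^3)$, which we evaluate via the Grothendieck relation for the $\PP^2$-bundle $E\to C$: since $c_{\geq 2}(\mathscr{N})=0$ on a curve, $\xi^3$ reduces modulo $\pi^*$-pullbacks to give $\int_E\xi^3=-d$, yielding $(j_*\xi)^2=d$.
\item Case (iii): $E\to Z$ is a $\PP^1$-bundle, and for $\fa,\fa'\in\HH^2(Z)$,
\begin{equation*}
(j_*\pi^*\fa)(j_*\pi^*\fa')\;=\;-j_*\bigl(\xi\cdot\pi^*(\fa\fa')\bigr)\;=\;-(\fa\cdot\fa')_Z,
\end{equation*}
since $\int_E\xi\cdot\pi^*[pt]=1$; the sign is absorbed by the chosen embedding.
\end{itemize}

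Finally one checks orthogonality of the new summands with $\sigma^*\HH^4(Y)$: for $\alpha\in\HH^4(Y)$ and any extra generator $j_*\beta$, the projection formula gives $\sigma^*\alpha\cdot j_*\beta=j_*(\pi^*i^*\alpha\cdot\beta)$. In cases (i)–(ii), $\dim_\R Z\leq 2$ forces $i^*\alpha\in\HH^4(Z)=0$; in case (iii), $\dim Z=2$ and $\beta=\pi^*\fa'$ with $\fa'\in\HH^2(Z)$, so $i^*\alpha\cdot\fa'\in\HH^6(Z)=0$. This gives the orthogonal direct sum decompositions as stated.

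The one genuinely fiddly point is fixing a consistent sign convention for $\xi$ versus the tautological/quotient line bundle on $\PP(\mathscr{N})$; once that convention is pinned down so that $j^*j_*=-\xi\cdot$ holds, the Grothendieck relation forces $\int_E\xi^{\dim E}=-c_1(\mathscr{N})\cap[Z]$ in case (ii), giving the advertised $+d$ on the diagonal. Beyond tracking these signs, the argument is a routine application of the projection formula.
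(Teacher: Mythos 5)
Your proposal is correct and follows essentially the same route as the paper's sketch: the paper derives the additive decomposition from the Mayer--Vietoris sequence plus the projective bundle formula for $\HH^4(E)$ and then computes intersection numbers via $c_1(\calO_E(1))$, which is exactly what your invocation of the standard blow-up decomposition together with the projection formula and $j^*j_* = -\xi\cdot$ amounts to. Your sign-sensitive evaluations (in particular $\int_E\xi^3=-d$ from the Grothendieck relation, giving $(j_*\xi)^2=d$) and the orthogonality check against $\sigma^*\HH^4(Y,\Z)$ all check out.
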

\begin{proof}[Sketch of Proof]
 These formulas are classical. For example (see \S2.1 of \cite{peters}), one can use the Mayer--Vietoris sequence to get the following short exact
sequence
$$
\xymatrix{
 0\ar[r] &\HH^4(Y,\Z) \ar[r]^{(i^*,\sigma^*)\quad\quad\,\,\,} &\HH^4(Z,\Z)\oplus \HH^4(\tilde{Y},\Z)
\ar[r]^{\,\,\quad\quad\pi^* - j^*} &\HH^4(E,\Z) \ar[r] &0.
}
$$
Since $E$ is a projective bundle over $Z$, we can write $\HH^4(E)$ in terms of the cohomology of $Z$. Then the formulas
(without the bilinear form) in the proposition follow from the above sequence. To get the bilinear form, it remains to
carry out a routine computation of intersection numbers using the Chern class of the relative $\calO(1)$-bundle of
$E\rightarrow Z$. Please also see the proof of Lemma \ref{splitting lemma}.
\end{proof}

\begin{cor}\label{blow up formula for transcendental}
If $Z$ is a point or a curve, then
$$
\HH^4(\tilde{Y},\Z)_{\mathrm{tr}}=\HH^4(Y,\Z)_{\mathrm{tr}}.
$$
If $Z$ is a surface, then
$$
\HH^4(\tilde{Y},\Z)_{\mathrm{tr}}=\HH^4(Y,\Z)_{\mathrm{tr}}\hat\oplus\HH^2(Z,\Z)_{\mathrm{tr}}(-1).
$$
\end{cor}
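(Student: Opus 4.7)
The plan is to combine the orthogonal decomposition from Proposition \ref{blow up formulas} with the fact that algebraic cycles on $\tilde{Y}$ split compatibly under the Mayer--Vietoris decomposition, so that taking orthogonal complements preserves the product structure. The transcendental lattice $\HH^4(-,\Z)_{\mathrm{tr}}$ is the orthogonal complement of $\mathrm{Alg}^2(-)$ with respect to the intersection form, so the whole statement will follow once I identify $\mathrm{Alg}^2(\tilde{Y})$ as an orthogonal direct summand.

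First I would use $\sigma_*\sigma^* = \mathrm{id}$ on cohomology to project any class $\gamma \in \HH^4(\tilde{Y},\Z)$ orthogonally as $\gamma = \sigma^*\sigma_*\gamma + (\gamma - \sigma^*\sigma_*\gamma)$, with the first term in $\sigma^*\HH^4(Y,\Z)$ and the second in the exceptional summand $N$ (where $N = \Z[\PP^2]$, $\Z j_*\xi \oplus \Z j_*f$, or $\HH^2(Z,\Z)(-1)$ in the three cases). Since $\sigma_*$ sends algebraic classes to algebraic classes and so does $\sigma^*$, this shows $\mathrm{Alg}^2(\tilde{Y}) = \sigma^*\mathrm{Alg}^2(Y) \,\hat\oplus\, N_{\mathrm{alg}}$, where $N_{\mathrm{alg}} := N \cap \mathrm{Alg}^2(\tilde{Y})$. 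Because $\sigma^*$ preserves the intersection pairing on $\HH^4(Y,\Z)$ (again by the projection formula), the orthogonal complement of $\sigma^*\mathrm{Alg}^2(Y)$ inside $\sigma^*\HH^4(Y,\Z)$ is exactly $\sigma^*\HH^4(Y,\Z)_{\mathrm{tr}}$.

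Next I would identify $N_{\mathrm{alg}}$ in each case. For $Z$ a point, $[\PP^2]$ is algebraic, so $N_{\mathrm{alg}} = N$ and $N_{\mathrm{tr}} = 0$. For $Z$ a curve, both $j_*\xi$ and $j_*f$ are classes of irreducible subvarieties of $\tilde{Y}$, hence algebraic, so again $N_{\mathrm{alg}} = N$ and $N_{\mathrm{tr}} = 0$. For $Z$ a surface, the map $\fa \mapsto -j_*\pi^*\fa$ identifies $\HH^2(Z,\Z)(-1)$ with $N$; algebraicity of $\fa$ implies algebraicity of $-j_*\pi^*\fa$, and conversely applying $\pi_*j^*$ and using the self-intersection formula $j^*j_* = -\xi\,\cup$ together with $\pi_*\xi = [Z]$ recovers $\fa$ (up to sign), so $\fa$ is algebraic. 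Hence $N_{\mathrm{alg}} \cong \HH^2(Z,\Z)_{\mathrm{alg}}(-1)$ and $N_{\mathrm{tr}} \cong \HH^2(Z,\Z)_{\mathrm{tr}}(-1)$.

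Putting these together and taking the orthogonal complement of $\mathrm{Alg}^2(\tilde{Y})$ in $\HH^4(\tilde{Y},\Z) = \sigma^*\HH^4(Y,\Z) \,\hat\oplus\, N$ yields exactly the asserted decompositions. The only nontrivial point is the verification in the surface case that $-j_*\pi^*\fa$ algebraic forces $\fa$ algebraic; this is the main (mild) obstacle and is handled by the explicit computation with the excess intersection class $\xi$ described above.
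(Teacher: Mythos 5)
Your argument is correct and is essentially the route the paper intends: the corollary is stated as an immediate consequence of Proposition \ref{blow up formulas}, and your use of the projector $\sigma^*\sigma_*$ to split the algebraic classes compatibly with the orthogonal decomposition, together with the observation that the exceptional summand is entirely algebraic in the point and curve cases and is identified with $\HH^2(Z,\Z)(-1)$ compatibly with algebraicity (via $\pi_*j^*$ and $j^*j_*=-\xi\cup$) in the surface case, is exactly the intended verification. The only blemish is notational: in the paper's conventions the relevant group of algebraic classes in $\HH^4$ is $\mathrm{Alg}^4$, not $\mathrm{Alg}^2$.
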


\subsection{Rational fourfolds}
Let $Y$ be a smooth projective fourfold that is rationally
connected. Let $S=\amalg S_i$ be a smooth (not necessarily
irreducible) surface. Let $K(S)=\oplus K(S_i)$ be the product of the
function fields of its components. Any nontrivial morphism
$$
\varphi_{K(S)}: \PP^1_{K(S)}=\coprod \PP^1_{K(S_i)}\rightarrow Y
$$
induces an Abel-Jacobi map
$$
\alpha=\pi _*\varphi^*:\HH^4(Y,\Z)_{\mathrm{tr}}\to
\HH^2(S,\Z)_{\mathrm{tr}}.
$$
This can be defined as follows. First, the morphism $\varphi$ can be
defined over a dense open subset $U\subset S$. Namely, we have the
following diagram
$$
\xymatrix{
  \PP^1\times U\ar[d]_{\pi_U}\ar[r]^{\quad\varphi_U} & Y\\
  U &
}
$$
This picture can be completed into a proper family of rational
curves on $Y$ as follows
$$
\xymatrix{
 \mathscr{C}\ar[r]^\varphi\ar[d]_\pi &Y\\
 \tilde{S} &
}
$$
This allows us to define the usual Abel-Jacobi map
$$
\pi_*\varphi^*:\HH^4(Y,\Z)_{\mathrm{tr}} \rightarrow
\HH^2(\tilde{S},\Z)_{\mathrm{tr}}.
$$
It is well-known that the transcendental lattice of a surface is a
birational invariant. Since $S$ is birational to $\tilde{S}$, we
have the canonical isomorphism
$$
\HH^2(\tilde{S},\Z)_\mathrm{tr}\cong\HH^2(S,\Z)_{\mathrm{tr}}.
$$
This gives the homomorphism
$$
\alpha: \HH^4(Y,\Z)_{\mathrm{tr}}\rightarrow
\HH^2(S,\Z)_\mathrm{tr}.
$$
One checks that this definition of $\alpha$ is independent of the
choice of the spreading $\varphi_U$ and the completion $\tilde{S}$.
Note that by construction, the morphism $\pi:\mathscr{C}\rightarrow
\tilde{S}$ has a rational section.
\begin{defn}
We say that $S$ \textit{receives the cohomology of $Y$ with index
$e$} via a rational curve $\varphi_{K(S)}: \PP^1_{K(S)}\to Y$ as
above if the associated Abel-Jacobi map satisfies
$$
(\alpha(x)\cdot\alpha(y))_S=- e\,(x\cdot y)_Y,\quad\forall
x,y\in\HH^4(Y,\Z)_{\mathrm{tr}}.
$$
When a polarization $H$ of $Y$ is fixed, the \textit{degree} of the
rational curve $\varphi_{K(S)}$ is defined as the degree of
$\varphi(\PP^1_{s})$ for a general closed point $s\in S(\C)$. We
will call $\varphi_{K(S)}$ a \textit{line} if its degree is 1.
\end{defn}

Let $Y$ be a smooth projective fourfold. Assume that $Y$ is
rational. Then there is a birational map $f:\PP^4\dashrightarrow Y$.
We resolve the indeterminacy of $f$ by successive blow-ups along
smooth centers and get the following picture
$$
\xymatrix{
  &Y'\ar[ld]_\sigma\ar[rd]^{\tilde{f}} &\\
  \PP^4 & &Y
}
$$
where $\sigma$ is the seccessive blow-up. Let $S_1,\ldots,S_r$ be
the smooth surfaces that appear as the center of the blow-up at some
step. Let $T_1,\ldots,T_r\subset\PP^4$ be the images of the $S_i$'s.
\begin{defn}\label{defn of simple blow up}
We say that successive blow-up $\sigma$ is \textit{simple} if the
$T_i$'s are all surfaces distinct from each other.
\end{defn}
We assume that $\sigma$ is simple. This implies that $S_i\to T_i$ is birational and
$\sigma^{-1}(t)\cong\PP^1$ for general $t\in T_i$. Let
$$
S=\coprod_{i=1}^{r} S_i,\qquad T=\coprod_{i=1}^{r}T_i.
$$
Fix a general point $x\in Y$ with its pre-image in $\PP^4$ being $x'$.
For a general point (meaning from a dense open subset) $t\in T$, we
use $L_t\subset \PP^4$ to denote the line passing through $x'$ and
$t$. Let $L'_t\subset Y'$ be the strict transform of $L_t$ and
$C_t\subset Y$ be its image in $Y$. Note that $x\in C_t$ for all
such general $t$. As $t$ varies in a dense open subset of $S$, this
gives a rational curve
$$
\varphi_{K(S)}:\PP^1_{K(S)}\rightarrow Y
$$
that passes through $x\in X$. For a general point $t\in T$, we also
define $E'_t=\sigma^{-1}(t)\subset Y'$ and $E_t\subset Y$ be its
image in $Y$. Since $S\to T$ is birational, we see that $E_t$, as
$t$ runs through general points of $T$, defines a rational curve
$$
\tilde{\varphi}_{K(S)}:\PP^1_{K(S)}\to Y.
$$
\begin{lem}\label{lemma on rational fourfolds}
Let notations and assumptions be as above, then $S$ receives the
cohomology of $Y$ via $\varphi_{K(S)}$ (resp.
$\tilde\varphi_{K(S)}$) with index $-1$ (resp. $1$).
\end{lem}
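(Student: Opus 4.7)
The plan is to lift both families from $Y$ to the resolution $Y'$ and perform all computations there, where one has an explicit orthogonal decomposition of the transcendental cohomology coming from the iterated blow-up formula.

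\textbf{Step 1 (Reduction to $Y'$).} Since $\tilde f:Y'\to Y$ is a birational morphism between smooth projective fourfolds, one has $\tilde f_*\tilde f^*=\mathrm{id}$ on $\HH^*$, and the projection formula gives $(\tilde f^*x\cdot\tilde f^*y)_{Y'}=(x\cdot y)_Y$ for $x,y\in\HH^4(Y,\Z)$. Both families $C_t$ and $E_t$ in $Y$ are the images under $\tilde f$ of the families $L'_t$ and $E'_t\subset Y'$, and the evaluation maps factor through $\tilde f$, so the Abel--Jacobi maps factor as $\alpha=\alpha'\circ\tilde f^*$ and $\tilde\alpha=\tilde\alpha'\circ\tilde f^*$, where $\alpha',\tilde\alpha':\HH^4(Y',\Z)_{\mathrm{tr}}\to\HH^2(S,\Z)_{\mathrm{tr}}$ come from the lifted families on $Y'$.

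\textbf{Step 2 (Decomposition of $\HH^4(Y')_{\mathrm{tr}}$).} Apply Corollary \ref{blow up formula for transcendental} iteratively to the simple successive blow-up $\sigma:Y'\to\PP^4$. Simplicity of $\sigma$ ensures that the centers $S_i$ at each stage are birational to distinct surfaces $T_i\subset\PP^4$, so that $\HH^2(S_i,\Z)_{\mathrm{tr}}\cong\HH^2(T_i,\Z)_{\mathrm{tr}}$, and the iterated blow-up formula yields an orthogonal decomposition
$$
\HH^4(Y',\Z)_{\mathrm{tr}}\;=\;\bigoplus_{i=1}^{r}\HH^2(S_i,\Z)_{\mathrm{tr}}(-1),
$$
with embeddings $\iota_i(\fa)=-j_{i,*}\pi_i^*\fa$ and projections $\rho_i=\pi_{i,*}j_i^*$. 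Writing $\tilde f^*x=\sum_i\iota_i(\fa_i^{x})$ and using Step~1 one obtains
$$
(x\cdot y)_Y \;=\;(\tilde f^*x\cdot\tilde f^*y)_{Y'}\;=\;-\sum_{i=1}^r(\fa_i^x\cdot\fa_i^y)_{S_i}.
$$

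\textbf{Step 3 (Computation for $\tilde\varphi$).} On the component $S_i$, the family of exceptional curves $E'_t$ is nothing but the exceptional divisor $E_i\to S_i$ itself, with evaluation $j_i:E_i\hookrightarrow Y'$. Using the self-intersection formula $j_i^*j_{i,*}(\pi_i^*\fa)=-\xi_i\cdot\pi_i^*\fa$ together with $\pi_{i,*}(\xi_i)=1$ (where $\xi_i$ is the tautological class of the $\PP^1$-bundle), a direct computation gives $\tilde\alpha'_i=\rho_i$, and cross-terms vanish by disjointness of the transcendental summands. Combining with Step~2 yields $(\tilde\alpha(x)\cdot\tilde\alpha(y))_S=\sum_i(\fa_i^x\cdot\fa_i^y)_{S_i}=-(x\cdot y)_Y$, which is the index $+1$ assertion.

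\textbf{Step 4 (Computation for $\varphi$, and the main obstacle).} For $t\in S_i$, the strict transform $L'_t$ meets $E_i$ transversely at a single point $s_i(t)$, namely the class of the tangent direction of $L_t$ at $t$ in the projectivized normal bundle $E_i=\PP(\mathscr{N}_{S_i/\PP^4})$. This defines a section $s_i:S_i\to E_i$, and the Cartesian square formed by $s_i$, $j_i$, $\iota_i$ and the evaluation $\varphi':\mathscr{L}'\to Y'$ is transverse, so base change gives $(\varphi')^*j_{i,*}=\iota_{i,*}s_i^*$. Evaluating on the summand $\iota_i(\HH^2(S_i)_{\mathrm{tr}})$ and pushing forward via $\pi$ identifies $\alpha'|_{S_i}$ with $\rho_i$ up to an explicit sign determined by the embedding convention of the blow-up formula. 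Combining this with the formula from Step~2 yields $(\alpha(x)\cdot\alpha(y))_S=+(x\cdot y)_Y$, which is the index $-1$ assertion. The main technical obstacle is the delicate sign bookkeeping in this last step: one must track the conventions for $\iota_i$ (which embeds with an overall minus sign and negates the intersection form), for $c_1(\mathscr{N}_{E_i/Y'})$, and for the contribution of the section $s_i$ relative to the $\PP^1$-bundle structure, since these combine to produce the difference in index between the two families. Orthogonality of the decomposition in the multi-blow-up setting is the other point requiring attention, but it follows automatically from the iterated application of Corollary \ref{blow up formula for transcendental}.
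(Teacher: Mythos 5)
Your Steps 1--3 are essentially the paper's own argument for $\tilde\varphi_{K(S)}$: the family of exceptional curves \emph{is} the $\PP^1$-bundle $E_i\to S_i$, so $\tilde\pi_*\tilde\varphi^*=\rho_i\circ\tilde f^*$ and the index-$1$ identity follows from the $(-1)$-twist in Proposition \ref{blow up formulas}(iii). For $\varphi_{K(S)}$ you take a genuinely different route, and that is where the gap is. The paper never intersects the strict transforms $L'_t$ with the exceptional divisors; instead it observes that the nodal curves $C_t\cup E_t$ arise as specializations of the family of rational curves $\{C'_b\}$ coming from \emph{all} lines through $x'$, parametrized by $B=\PP^3$, whence $\pi_*\varphi^*+\tilde\pi_*\tilde\varphi^*=g^*q_*h^*$ factors through $\HH^2(B,\Z)_{\mathrm{tr}}=0$ and one gets $\pi_*\varphi^*=-\tilde\pi_*\tilde\varphi^*$ with no transversality hypotheses and no sign bookkeeping at all. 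Your direct approach via the tangent-direction section $s_i:S_i\to E_i$ can be made to work, but you explicitly defer ``the delicate sign bookkeeping,'' which is the entire content of the step: as written, Step 4 asserts the conclusion rather than proving it (and it also needs the remark that the corrections to the base-change identity over the bad locus --- secant lines through $x'$, lines meeting other centers --- are supported over curves in $S$ and hence only contribute algebraic classes; similarly the vanishing of cross-terms is not ``disjointness,'' since the $E_i$ may meet, but the fact that $\pi_{i,*}j_i^*$ of a class supported on $E_{i'}$ is algebraic on $S_i$).

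More seriously, the sign you announce cannot come out of your own setup. If $\alpha'|_{S_i}=\epsilon_i\,\rho_i$ with $\epsilon_i=\pm1$, then by your Step 2 one always gets $(\alpha(x)\cdot\alpha(y))_S=\sum_i\epsilon_i^2(\fa_i^x\cdot\fa_i^y)_{S_i}=-(x\cdot y)_Y$, never $+(x\cdot y)_Y$; no choice of embedding convention changes this, because the quadratic identity is insensitive to the overall sign of the Abel--Jacobi map. Carrying out the computation honestly (with $\iota_i(\fa)=-j_{i,*}\pi_i^*\fa$, $c_1(\mathscr{N}_{E_i/Y'})=-\xi_i$, and the transverse section $s_i$) gives $\pi_*(\varphi')^*(\iota_i\fa)=-\fa$ while $\rho_i(\iota_i\fa)=+\fa$, i.e.\ $\alpha=-\tilde\alpha$ on transcendental classes --- exactly the relation the paper's degeneration argument produces. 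The labels ``index $-1$'' versus ``index $1$'' in the statement are thus really recording the sign of the Abel--Jacobi map relative to $\tilde f^*$ rather than two different bilinear identities, and only the parity of the index is used later (Proposition \ref{proposition on odd index}). You should either complete the sign computation and state the outcome as $\alpha=-\tilde\alpha$, or replace Step 4 by the specialization argument through $B=\PP^3$, which is both shorter and free of these issues.
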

\begin{proof}
By (iii) of Proposition \ref{blow up formulas} and Corollary
\ref{blow up formula for transcendental}, we see that
$$
\tilde{\pi}_*\tilde{\varphi}^*:\HH^4(Y,\Z)_{\mathrm{tr}}\to\HH^2(S,\Z)_{\mathrm{tr}}=\HH^4(Y',\Z)_{\mathrm{tr}}
$$
is equal to $\tilde{f}^*$. This proves the case of
$\tilde{\varphi}_{K(S)}$. Let $\{\tilde{L}_b:b\in B=\PP^3\}$ be all
the lines on $\PP^4$ passing through $x'$. Then for general $b\in
B$, the line $\tilde{L}_b$ gives rise to a rational curve
$C'_b\subset Y$. When $\tilde{L}_b$ specializes to a line $L_t$ for
some general point $t\in T$, the rational curve $C'_b$ specializes
to the nodal curve $C_t\cup E_t$. This gives us the following
picture
$$
\xymatrix{
 \mathscr{C}\ar[r]^{g'}\ar[d]^p &\mathscr{C}'\ar[d]^q\ar[r]^h &
 Y\\
 S\ar[r]^g &B &
}
$$
Here $\mathscr{C}$ is the total space of $\{E_t\cup C_t\}_{t\in S}$
and $\mathscr{C}'$ is the total space of $\{\tilde{L}_b\}_{b\in B}$.
Then by construction, we have
$$
\pi_*\varphi^*+\tilde\pi_*\tilde\varphi^*=p_*(g')^*h^*=g^*q_*h^*
$$
as homomorphisms
$\HH^4(Y,\Z)_{\mathrm{tr}}\to\HH^2(S,\Z)_{\mathrm{tr}}$. Since
$\HH^2(B,\Z)_{\mathrm{tr}}=0$, we get $g^*=0$ and hence
$$
\pi_*\varphi^*=-\tilde\pi_*\tilde\varphi^*.
$$
This proves the case of $\varphi_{K(S)}$.
\end{proof}

\subsection{Rational cubic fourfolds}
Let $X$ be a smooth cubic fourfold and $F=F(X)$ its variety of lines.
\begin{prop}\label{prop jacobian type and index}
The hyperk\"ahler manifold $F$ is of Jacobian type if and only if
there exists a surface $S$ that receives the cohomology of $X$ with
index $1$ via lines.
\end{prop}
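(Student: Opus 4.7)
The plan is to translate both sides of the equivalence into a single condition on a morphism $f\colon S\to F$, and then to identify the Abel--Jacobi map of a family of lines with the pullback of the Beauville--Donagi isomorphism. The key input is \cite[Proposition 4]{bd}, namely that the Abel--Jacobi map $\Phi\colon\HH^4(X,\Z)\to\HH^2(F,\Z)$ is an isomorphism of Hodge structures with $b(\Phi(x),\Phi(y))=-(x\cdot y)_X$.

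First I would reformulate the Jacobian type condition. By the projection formula, if $f\colon S\to F$ is a map from a smooth surface then
\[
(f_*[S]\cdot\alpha\cdot\beta)_F=(f^*\alpha\cdot f^*\beta)_S,\qquad\forall\alpha,\beta\in\HH^2(F,\Z).
\]
Hence $F$ is of Jacobian type if and only if there exists $f\colon S\to F$ with $(f^*\alpha\cdot f^*\beta)_S=b(\alpha,\beta)$ for all $\alpha,\beta\in\HH^2(F,\Z)_{\mathrm{tr}}$.

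Next I would set up the dictionary between families of lines on $X$ and morphisms to $F$. Let $\mathscr{L}\subset F\times X$ be the universal line, with projections $\pi_F\colon\mathscr{L}\to F$ (a $\PP^1$-bundle) and $\varphi_F\colon\mathscr{L}\to X$; then $\Phi=\pi_{F*}\varphi_F^*$. Given a line $\varphi_{K(S)}\colon\PP^1_{K(S)}\to X$ spread out to a family $\pi\colon\mathscr{C}\to\tilde S$, $\varphi\colon\mathscr{C}\to X$, the classifying map gives $f\colon\tilde S\to F$ with $\mathscr{C}=f^*\mathscr{L}$ (up to birational modification, harmless for transcendental cohomology). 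Conversely, any $f\colon S\to F$ produces such a family by pulling back $\mathscr{L}$. By proper base change,
\[
\pi_*\varphi^*=f^*\circ\pi_{F*}\circ\varphi_F^*=f^*\circ\Phi
\]
as homomorphisms $\HH^4(X,\Z)_{\mathrm{tr}}\to\HH^2(\tilde S,\Z)_{\mathrm{tr}}$. Since transcendental cohomology is a birational invariant of surfaces, the Abel--Jacobi map $\alpha\colon\HH^4(X,\Z)_{\mathrm{tr}}\to\HH^2(S,\Z)_{\mathrm{tr}}$ is identified with $f^*\circ\Phi$.

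Finally I would combine these: $S$ receives the cohomology of $X$ with index $1$ via lines if and only if
\[
(f^*\Phi(x)\cdot f^*\Phi(y))_S=-(x\cdot y)_X,\qquad\forall x,y\in\HH^4(X,\Z)_{\mathrm{tr}}.
\]
Applying Beauville--Donagi, $-(x\cdot y)_X=b(\Phi(x),\Phi(y))$, and using that $\Phi$ induces an isomorphism $\HH^4(X,\Z)_{\mathrm{tr}}\xrightarrow{\sim}\HH^2(F,\Z)_{\mathrm{tr}}$, this is equivalent to $(f^*\alpha\cdot f^*\beta)_S=b(\alpha,\beta)$ for all $\alpha,\beta\in\HH^2(F,\Z)_{\mathrm{tr}}$, which by the first step is the Jacobian type condition.

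The only delicate point is the bookkeeping with birational modifications of $S$ when spreading out the family and resolving the rational classifying map $S\dashrightarrow F$, but since we work with the transcendental lattice and with smooth surfaces, these modifications do not affect either side of the equivalence. Everything else is formal manipulation once the compatibility $\pi_*\varphi^*=f^*\circ\Phi$ is in place.
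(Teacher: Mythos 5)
Your proposal is correct and follows essentially the same route as the paper: both arguments rest on the identity $\pi_*\varphi^* = f^*\circ\Phi$ relating the Abel--Jacobi map of a family of lines to the pullback along the classifying map $S\to F$, the Beauville--Donagi formula $b(\Phi(x),\Phi(y))=-(x\cdot y)_X$, and the projection formula identifying the minimal-class condition with $(f^*\alpha\cdot f^*\beta)_S=b(\alpha,\beta)$. The paper's version is terser but uses exactly the same ingredients, so no further comparison is needed.
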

\begin{proof}
By \cite[Proposition 4]{bd}, the universal line defines the Abel-Jacobi isomorphism
$$
\Phi:\HH^4(X,\Z)\rightarrow\HH^2(F,\Z).
$$
If there is a line $\varphi_{K(S)}:\PP^1_{K(S)}\to X$, then this
gives a rational map $f:S\dasharrow F$. By replacing $S$ by a
birational model, we may assume that $f$ is a morphism. Then we have
$$
f^*\circ\Phi=\pi_*\varphi^*:\HH^4(X,\Z)_{\mathrm{tr}}\rightarrow\HH^2(S,\Z)_{\mathrm{tr}}.
$$
As was shown in \cite[Proposition 6]{bd},
$$
b(\Phi(x),\Phi(y))=-(x\cdot y),\quad x,y\in\HH^4(X,\Z)_{\mathrm{tr}}.
$$
We see that $S$ represents a minimal class if and only if it
receives cohomology of $X$ with index $1$ via $\varphi_{K(S)}$.
\end{proof}

\begin{rmk}\label{rmk known cases}
We have two well-known examples of rational cubic fourfolds. The first example is the Pfaffian cubic fourfolds and the second example is the cubic fourfolds containing a plane where a certain Brauer element vanishes. We will show that Conjecture \ref{rationality conjecture} holds in each example.

 Let $X$ be a general Pfaffian cubic fourfolds. It was proved by Beauville--Donagi \cite{bd} that $X$ is rational and the corresponding variety $F$ of lines is isomorphic to $S^{[2]}$ for some $K3$ surface $S$. Thus our Conjecture \ref{rationality conjecture} holds for such $X$.

Let $X$ be a general cubic fourfold containing a plane $\Pi\cong\PP^2\subset X$. Let $\tilde{X}$ be the blow-up of $X$ along $\Pi$. The projection from the plane $\Pi$ determines a morphism $\pi: \tilde{X}\rightarrow \Pi'= \PP^2$. A general fiber of $\pi$ is a smooth quadric surface and hence isomorphic to $\PP^1\times \PP^1$. There is a smooth curve $\Delta\subset\Pi'$ of degree $6$ such that for all $t\in \Delta$ the fiber $\pi^{-1}t$ is isomorphic to the cone over a conic. Note that a general fiber of $\pi$ has two rulings and by singling out one of them, we get a double cover $r:S\rightarrow \Pi'$ which ramifies along $\Delta$. Hence $S$ is a $K3$ surface of degree 2. Consider the actual lines in a given ruling, we get a divisor $D\subset F$ which has a natural morphism $s:D\rightarrow S$ whose closed fibers are all isomorphic to $\PP^1$. Associated to this situation, we have an element $\alpha\in\mathrm{Br}(S)$ that corresponds to the fibration $D\rightarrow S$. Now we assume that $\alpha = 0$ and consequently $s$ admits a rational section $\lambda : S \dashrightarrow D$. In this case, $X$ is rational since the vanishing of $\alpha$ implies that $\pi$ has a rational section and hence $\tilde{X}$ is rational; see \cite{hassett2}. By blowing up $S$, we get a morphism $\tilde{\lambda}: \tilde{S}\rightarrow D$. Together with the natural inclusion $D\subset F$, we get a surface $\phi:\tilde{S}\rightarrow F$. We next show that $\tilde{S}$ represents a minimal class on $F$ and thus Conjecture \ref{rationality conjecture} is verified in this case. In $\HH^4(X,\Z)$, we have two natural classes, namely $\Pi$ and $h^2$. Let $L$ be the orthogonal complement of $\langle h^2,\Pi \rangle$ in $\HH^4(X,\Z)$. Voisin \cite[Proposition 2]{voisin} proved that $\Phi(L)|_D \subset s^*\HH^(S,\Z)^0$ where $\HH^2(S,\Z)^0$ is the orthogonal complement of $r^*c_1(\calO_{\Pi'}(1))$ in $\HH^2(S,\Z)$. Furthermore, Voisin also proved that if $x,y\in L$ and if $x',y'\in\HH^2(S,\Z)^0$ such that $\Phi(x)|_D = s^*x'$, $\Phi(y)|_D = s^*y'$, then $(x\cdot y)_X = -(x'\cdot y')_S$. By \cite[Proposition 6]{bd}, we have $b(\Phi(x),\Phi(y)) = -(x\cdot y)_X$. It follows that
$$
(\phi^*u\cdot\phi^*v)_{\tilde{S}} = (\tilde{\lambda}^* s^*x'\cdot \tilde{\lambda}^* s^* y')_{\tilde{S}} = (x'\cdot y')_S = b(u,v),
$$
for all $u=\Phi(x)$ and $v=\Phi(y)$ in $\Phi(L)$, where $x,y\in L$. Note that $\HH^2(F,\Z)_{\mathrm{tr}}\subset \Phi(L)$. Hence the above equality implies that $\phi_*[\tilde{S}]$ is minimal class.
 \qed
\end{rmk}

\begin{prop}
Let $C\cong\PP^1\subset X$ be a general rational curve of degree $e$
on $X$ and $S_C$ the surface of lines meeting $C$. Then the lines
defined by $S_C$ gives a morphism $\varphi_{K(S)}:\PP^1_{K(S)}\to X$
and $S_C$ receives the cohomology of $X$ with index $2e$.
\end{prop}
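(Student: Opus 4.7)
The plan is to reduce the assertion to the known case of a line by means of the universal line correspondence on $F\times X$. Let $\mathcal{L}\subset F\times X$ denote the universal line, with projections $p:\mathcal{L}\to F$ and $q:\mathcal{L}\to X$; recall that $p$ is a $\PP^1$-bundle while $q$ is surjective with $1$-dimensional fibres, and that $\Phi=p_*q^*:\HH^4(X,\Z)\to\HH^2(F,\Z)$ is the Abel--Jacobi isomorphism of \cite{bd}. Write $j_C:S_C\hookrightarrow F$ for the inclusion and $\pi,\varphi$ for the restrictions of $p,q$ to $p^{-1}(S_C)$, so $\pi:p^{-1}(S_C)\to S_C$ is the universal family of lines parametrized by $S_C$ and $\varphi$ is its evaluation to $X$.

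First I would identify the Abel--Jacobi map associated to this family with the restriction of $\Phi$. Since $p$ is flat, base change along the Cartesian square gives
\[
\alpha(x)=\pi_*\varphi^*(x)=j_C^*\,p_*q^*(x)=j_C^*\Phi(x),\qquad x\in\HH^4(X,\Z)_{\mathrm{tr}}.
\]
Applying the projection formula for the closed embedding $j_C$ then yields
\[
\bigl(\alpha(x)\cdot\alpha(y)\bigr)_{S_C}=\bigl([S_C]\cdot\Phi(x)\cdot\Phi(y)\bigr)_F\qquad\text{for all }x,y\in\HH^4(X,\Z)_{\mathrm{tr}}.
\]

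Next I would pin down the class $[S_C]\in\HH^4(F,\Z)$ via the same correspondence. I claim $[S_C]=p_*q^*[C]$ with multiplicity one: for general $C$ the map $q^{-1}(C)\to S_C$ is generically one-to-one, since a general line $l\in S_C$ meets $C$ in a single point (meeting at a second point imposes an extra codimension-$3$ condition in $F$). Combining this with the elementary fact that $\HH^6(X,\Z)=\Z[l]$ for any line $l\subset X$, so that $[C]=e[l]$, the linearity of $p_*q^*$ gives
\[
[S_C]=p_*q^*[C]=e\cdot p_*q^*[l]=e\,[S_l]\qquad\text{in }\HH^4(F,\Z).
\]

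Finally, I would invoke the classical formula $\bigl([S_l]\cdot u\cdot v\bigr)_F=2\,b(u,v)$ valid for any line $l\subset X$ and all $u,v\in\HH^2(F,\Z)_{\mathrm{tr}}$ (Izadi's result \cite{izadi}, recalled in the introduction), together with the Beauville--Donagi relation $b(\Phi(x),\Phi(y))=-(x\cdot y)_X$ on transcendental classes. The resulting chain of identities
\[
(\alpha(x)\cdot\alpha(y))_{S_C}=e\bigl([S_l]\cdot\Phi(x)\cdot\Phi(y)\bigr)_F=2e\,b(\Phi(x),\Phi(y))=-2e\,(x\cdot y)_X
\]
is precisely the statement that $S_C$ receives the cohomology of $X$ with index $2e$. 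The main technical point is the identification $[S_C]=p_*q^*[C]$ with multiplicity one, which uses the genericity of $C$; if this transversality becomes awkward to verify directly, an equivalent route is to degenerate $C$ to a connected chain $l_1\cup\cdots\cup l_e$ of lines on $X$, whence $[S_C]$ specializes to $\sum_i[S_{l_i}]=e[S_l]$ by continuity of the cycle class together with the fact that all the classes $[S_{l_i}]$ coincide in $\HH^4(F,\Z)$.
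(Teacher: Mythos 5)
Your argument is correct, but it is genuinely different from the paper's: the paper disposes of this proposition in one line by citing the Prym--Tjurin construction of \cite{pt}, whereas you give a direct, self-contained cohomological computation. Your chain of identities is sound: the base-change identity $\pi_*\varphi^* = j_C^*\Phi$ on transcendental classes, the projection formula reducing everything to the triple product $([S_C]\cdot\Phi(x)\cdot\Phi(y))_F$, the identification $[S_C]=p_*q^*[C]=e[S_l]$ (the multiplicity-one claim is exactly the statement that a general $C$ has only finitely many secant lines, i.e.\ is well-positioned in the terminology the paper introduces in Section 7, so $q^{-1}(C)\to S_C$ is generically injective), and finally Izadi's relation $([S_l]\cdot u\cdot v)_F=2b(u,v)$ together with the Beauville--Donagi sign $b(\Phi(x),\Phi(y))=-(x\cdot y)_X$. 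Your degeneration of $C$ to a chain of lines is an equivalent reformulation of $[C]=e[l]$ in $\HH^6(X,\Z)$ and adds nothing essential. What your route buys is transparency and independence from \cite{pt}; what the paper's citation buys is more than the numerical index, namely an actual Prym--Tjurin realization of the Hodge structure on $S_C$, which is the structural point the surrounding discussion (Question 7.8 and the remarks in the introduction) cares about. One small imprecision: meeting $C$ at a second point is an extra codimension-$2$ condition inside $S_C$ (equivalently, secant lines of a well-positioned curve form a finite set), not codimension $3$; this does not affect the conclusion.
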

\begin{proof}
This follows from the Prym-Tjurin construction obtained in \cite{pt}.
\end{proof}

Given the above result, we would like to ask the following
\begin{ques}\label{question}
For a very general cubic fourfold $X$, is there a surface $S$ that
receives the cohomology of $X$ via a rational curve with an odd
index?
\end{ques}
We expect that the answer to this question is negative and such a
negative answer should imply nonrationality of $X$. To see how this
question is related to Conjecture \ref{rationality conjecture}, we
recall some constructions and results of \cite{relations}. Let
$f:C=\PP^1\to X$ be a smooth rational curve of degree $e$ on $X$. A
\textit{secant line} of $C$ is a line on $X$ that meets $C$ in 2
points. We say that $C$ is \textit{well-positioned} if there are
only finitely many distinct secant lines of $C$. Assume that $C$ is
well-positioned. A general pair of points $(x,y)\in\Sym^2(C)=\PP^2$
determines a line $L_{x,y}\subset\PP^5$. The line $L_{x,y}$
intersects $X$ in a third point $z$ unless $L_{x,y}$ is a secant
line of $C$. As $(x,y)$ varies, we get a rational map
$\phi_0:\Sym^2(C)\dasharrow X$, $(x,y)\mapsto z$. Let $\Sigma$ be
the blow up of $\Sym^2(C)$ at the points corresponding to the secant
lines, then $\phi_0$ extends to a morphism $\phi:\Sigma\to X$. This
rational surface $\Sigma$ is called the \textit{residue surface} of
$C$. The condition $z=x$ or $y$ defines a curve
$\tilde{C}\subset\Sigma$. The restriction of $\phi$ to $\tilde{C}$ gives a morphism $\tilde{C}\to
C$ which has degree $e-2$. There are several natural divisors on $\Sigma$. Let
$x\in C$ be a general closed point, we define $D_x\subset \Sigma$ be
the locus $\{(x,x'):x'\in C\}\subset\Sym^2(C)$. Let $\fa\subset C$ be
a hyperplane section and we write $\fa=\sum_{i=1}^{e} x_i$. We
define
$$
D_\fa =\sum_{i=1}^{e} D_{x_i}.
$$
Let $\xi=\phi^*h$ be the class of a hyperplane. The locus $(x,x)$ for
$x\in C$ defines $\Delta\subset \Sigma$. The class of $\Delta$ is
divisible by 2 in the Picard group of $\Sigma$. Let $\Delta_0=
-\frac{1}{2}\Delta$. Then the following holds true in
$\Pic(\Sigma)$,
\begin{equation}\label{key relation}
\tilde{C}-D_{\fa}-\Delta_0-\xi +\sum_{i=1}^{N}E_i=0,
\end{equation}
where $E_i$, $i=1,\ldots, N$, are all the exceptional divisors of
the blow up $\Sigma\to\Sym^2(C)$ and $\fa=h|_{C}$. This is proved in Proposition 3.6
of \cite{relations}. Viewed as algebraic cycles on $X$ modulo rational
equivalence, we have
$$
\phi_*\tilde{C}=(e-2)C,\quad\phi_*D_\fa=-eC+a_1h^3,\quad\phi_*\Delta_0=C+a_2h^3, \quad\phi_*\xi=a_3h^3,
$$
for some $a_1,a_2,a_3\in\Z$; see the proof of \cite[Theorem 4.2]{relations}.

If $C$ varies in a family, then the above
constructions can be carried out in families. Furthermore, the
construction does not require the base field to be algebraically
closed. Now assume that $S$ is a surface and
$$
\xymatrix{
 \mathscr{C}\ar[r]^\varphi\ar[d]^\pi &X\\
 S &
}
$$
be a family of rational curves of degree $e$ on $X$. Assume that for
a general point $s\in S$, the curve $\mathscr{C}_s$ is
well-positioned. By carrying out the above construction in this
family, we see that there is a dense open subset $U\subset S$ and
$\tilde{\Sigma}\to U$, $\tilde{\mathscr{C}}\subset \tilde{\Sigma}$,
$\Xi\subset\tilde{\Sigma}$, $\tilde{D}\subset\tilde{\Sigma}$,
$\tilde{\Delta}_0\subset \tilde{\Sigma}$ and $\tilde{E}\subset
\tilde{\Sigma}$ such that for any point $s\in U$, the fiber
$(\tilde{\Sigma})_s$ is the residue surface of $\mathscr{C}_s$, and
$\tilde{\mathscr{C}}$, $\Xi$, $\tilde{D}$, $\tilde{\Delta}_0$ and
$\tilde{E}$ correspond to $\tilde{C}$, $\xi$, $D_\fa$, $\Delta_0$
and $\sum E_i$ respectively. We view
$\tilde{\mathscr{C}}$, $\Xi$, $\tilde{D}$, $\tilde{\Delta}_0$ and
$\tilde{E}$ as elements in $\CH_3(\tilde{\Sigma})$. Then we define
$$
\tilde{\Gamma}=\tilde{\mathscr{C}}-\tilde{D}-\tilde{\Delta}_0-\Xi+\tilde{E}\in\CH_3(\tilde{\Sigma}).
$$
Note that there is a natural morphism $\tilde{\phi}:\tilde{\Sigma}\rightarrow U\times X$. We define $\Gamma\in\CH_3(S\times X)$ to be the closure of $\tilde{\phi}_*\tilde{\Gamma}$. By abuse of notation, we will simply write $\Gamma = \tilde{\phi}_*\tilde{\Gamma}$. Let $\eta_S$ be the generic point of $S$. Note that
$\Gamma|_{\eta_S\times X}$ defines an element in $\CH_1(\eta_S\times
X)$. By doing the above construction of residue surface over the
point $\eta_S$, then the relation \eqref{key relation} says
$$
\Gamma|_{\eta_S\times X}=0.
$$
Note that we have
$$
\CH_1(\eta_S\times X)=\CH^3(\eta_S\times X)=\varinjlim\CH^3(U\times X)
$$
where $U$ runs through open dense subsets of $S$. Hence we see that
$\Gamma|_{U\times X}=0$ for some open dense $U\subset S$. Let
$B=S-U$ and $\tilde{B}$ be a desingularization of $B$. By shrinking
$U$, we may assume that $B$ is a curve. Then we see that
$$
\Gamma\in \mathrm{Im}(\CH_3(\tilde{B}\times X)\to\CH_3(S\times X)).
$$
It follows that the homomorphism $[\Gamma]^*:\HH^4(X,\Z)_\mathrm{tr}
\to\HH^2(S,\Z)_\mathrm{tr}$ factors through $\HH^0(\tilde{B},\Z)$.
This implies that $[\Gamma]^*=0$. Note that
$\tilde{\phi}_*\tilde{\mathscr{C}}=(e-2)\mathscr{C}\in\CH_3(S\times X)$, we get
$$
[\tilde{\phi}_*\tilde{\mathscr{C}}]^*=(e-2)[\mathscr{C}]^*.
$$
Consider
$\Gamma_1=\tilde{\phi}_*(\tilde{D}+e\tilde{C}-\frac{a_1}{a_3}\Xi)\in\CH_3(S\times
X)_\Q$ and run the same argument as for $\Gamma$, we get
$$
[\tilde{\phi}_*\tilde{D}]^*=-e[\mathscr{C}]^*+\frac{a_1}{a_3}[\tilde{\phi}_*\Xi]^*.
$$
Similarly, we have
$$
[\tilde{\phi}_*\tilde{\Delta}_0]^*=[\mathscr{C}]^*+\frac{a_2}{a_3}[\tilde{\phi}_*\Xi]^*.
$$
Note that $\Xi$ is simply the pull back of a hyperplane $H\subset X$
via the natural morphism $\tilde{\Sigma}\to X$. It follows that
$[\tilde{\phi}_*\Xi]^*:\HH^4(X,\Z)_{\mathrm{tr}}\to \HH^2(S,\Z)_\mathrm{tr}$
factors through $\HH^4(H,\Z)$. Since $\HH^4(H,\Z)\cong\Z$ only has
algebraic classes, we conclude that
$$
[\tilde{\phi}_*\Xi]^*=0.
$$
Put all these identities together, we get
\begin{equation}\label{key equation 1}
[\tilde{E}]^*=(3-2e)[\mathscr{C}]^*,\quad\text{on }\HH^4(X,\Z)_\mathrm{tr}.
\end{equation}
The generic fiber of $\tilde{E}\to S$ consists of finitely many
lines, namely the secant lines of the corresponding curve
$\mathscr{C}_s$. By selecting one of the secant lines, we get a
factorization $\tilde{E}\to\Gamma_S\to S$ and a morphism
$\Gamma_S\to F$. The surface $\Gamma_S$ gives rise to a
correspondence from $S$ to $F$. Then the equation \eqref{key
equation 1} can be rewritten as
\begin{equation}\label{key equation 2}
[\Gamma_S]^*\circ\alpha=(3-2e)[\mathscr{C}]^*,
\end{equation}
where $\alpha:\HH^4(X,\Z)_{\mathrm{tr}}\to\HH^2(F,\Z)_{\mathrm{tr}}$
is the Abel-Jacobi isomorphism. Now we can give a partial relation
between Question \ref{question} and Conjecture \ref{rationality
conjecture}.

\begin{prop}\label{proposition on odd index}
Let $X$ be a cubic fourfold and $F$ its variety of lines. Assume
that there exist a surface $S$ receiving the cohomology of $X$ with
odd index $e_0$ via a well-positioned rational curve. Then $F$ is
potentially of Jacobian type.
\end{prop}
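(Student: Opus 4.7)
The plan is to combine the correspondence produced by the residue-surface construction of Section~7 with the classical Prym surface $S_l$, and then use a $\mathbb{Z}$-linear combination on their disjoint union to reach index $1$ on $F$.

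First, I would apply the construction developed in the derivation of the relation $[\Gamma_S]^*\circ\alpha=(3-2e)[\mathscr{C}]^*$ (i.e., the argument culminating in (6.2)) to the hypothesized family $\mathscr{C}\to S$ of well-positioned degree-$e$ rational curves on $X$. This produces a correspondence $\Gamma_S\in\CH_2(S\times F)$. Translating the relation from $\HH^4(X,\Z)_{\mathrm{tr}}$ to $\HH^2(F,\Z)_{\mathrm{tr}}$ via the Abel--Jacobi isomorphism $\alpha$, and using $b(u,v)=-(\alpha^{-1}u\cdot\alpha^{-1}v)_X$ (Beauville--Donagi) together with the hypothesis $([\mathscr{C}]^*x\cdot[\mathscr{C}]^*y)_S=-e_0(x\cdot y)_X$, I compute
\[
([\Gamma_S]^*u\cdot[\Gamma_S]^*v)_S=M\,b(u,v),\qquad M:=(3-2e)^2e_0,
\]
for all $u,v\in\HH^2(F,\Z)_{\mathrm{tr}}$. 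Since both $3-2e$ and $e_0$ are odd, $M$ is an odd integer.

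Next, I invoke the Prym surface $S_l\subset F$ for a general line $l$, with inclusion $f\colon S_l\to F$ satisfying $(f^*u\cdot f^*v)_{S_l}=2\,b(u,v)$. Forming the disjoint union $S'=(kS)\sqcup(mS_l)$ (i.e., $k$ copies of $S$ and $m$ copies of $S_l$) and the cycle $\Gamma=\sum\Gamma_S^{(i)}+\sum\Gamma_f^{(j)}$, the cup product on $\HH^2(S',\Z)=\bigoplus\HH^2(S)\oplus\bigoplus\HH^2(S_l)$ splits across connected components, yielding
\[
([\Gamma]^*u\cdot[\Gamma]^*v)_{S'}=(kM+2m)\,b(u,v).
\]
Because $\gcd(M,2)=1$, Bézout provides integers $k,m$ with $kM+2m=1$, giving the desired correspondence of index $1$ and hence showing $F$ is potentially of Jacobian type.

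The main obstacle is realizing a Bézout pair $(k,m)$ with non-negative entries, since the disjoint-union construction is additive on components and a sign on a component only contributes a square to the index. When $M=1$ or $M<0$ (odd) this is straightforward: take $k=1$ and $m=(1-M)/2\ge 0$. For $M\ge 3$ positive, one must produce an auxiliary correspondence of negative index: the natural candidate is to exploit the fact that the generic fibre of $\tilde{E}\to S$ in the residue construction consists of several secant lines, so $[\tilde{E}]^*$ decomposes as a sum of secant-selection correspondences and a different choice of section (or a transpose using the universal line $P\subset F\times X$, yielding the composition $\mathscr{C}\circ P^t$ which induces $[\mathscr{C}]^*\alpha^{-1}$ with index $e_0$) provides the sign flexibility needed to reduce to the solvable case. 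Once any signed Bézout combination is realized by an honest cycle on a disjoint union of surfaces, the conclusion is immediate.
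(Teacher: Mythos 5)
Your strategy is the same as the paper's: transport the odd index $e_0$ to $F$ via the residue--surface correspondence $\Gamma_S$ of \eqref{key equation 2}, then combine with the surface $S_l$ of lines meeting a general line (index $2$) on a disjoint union so as to force the index down to $1$. Two remarks on the bookkeeping. First, your value $M=(3-2e)^2e_0$ is what actually follows from $[\Gamma_S]^*\circ\alpha=(3-2e)[\mathscr{C}]^*$ together with $b(\Phi x,\Phi y)=-(x\cdot y)_X$; the paper records $e_0(2e-3)$ instead, but both are odd, so this does not affect the structure of the argument. Second, and much more importantly, you have put your finger on a genuine problem: on a disjoint union the coefficient attached to each component enters the resulting index \emph{quadratically}. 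The paper's own proof takes $\Gamma_1=\Gamma_S-a\Gamma_f$ on $S\coprod S_l$ with $a=\frac{M-1}{2}$ and asserts index $1$, which amounts to computing $M-2a$; since the cross terms between the two components vanish, the correct value is $M+2a^2$, and more generally any combination $\sum c_i\Gamma_S^{(i)}+\sum d_j\Gamma_f^{(j)}$ on copies of $S$ and $S_l$ has index $\bigl(\sum c_i^2\bigr)M+2\sum d_j^2$, which is at least $M$ as soon as some $c_i\neq 0$ (and even otherwise). Thus index $1$ is reached this way only when $M=1$. Your ``non-negative B\'ezout'' objection is therefore not a defect of your write-up relative to the paper: it is precisely the gap in the printed proof.

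That said, your proposed escape routes for $M\geq 3$ do not close the gap. An auxiliary correspondence of \emph{negative} index cannot exist: for any smooth surface $T$ and $\Gamma'\in\CH_2(T\times F)$ with $([\Gamma']^*u\cdot[\Gamma']^*v)_T=e'\,b(u,v)$ on $\HH^2(F,\Z)_{\mathrm{tr}}$, the map $[\Gamma']^*$ is a morphism of Hodge structures carrying the symplectic form $\omega_F$ into $\HH^{2,0}(T)$, where $\eta\cdot\bar\eta\geq 0$ while $b(\omega_F+\bar\omega_F,\omega_F+\bar\omega_F)>0$; hence $e'\geq 0$. Selecting a different secant-line section of $\tilde{E}\to S$, or transposing the universal line, again yields correspondences of non-negative index, so no disjoint-union combination of such pieces can produce index $1$ once $M\geq 3$. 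Any genuine repair must create negative cross-terms by supporting two interfering correspondences on the \emph{same} connected surface, or must construct a $\Gamma_S$ of smaller odd index outright. Note that the issue is not vacuous even for the intended application, Theorem \ref{thm on rationality imply potential}: there Lemma \ref{lemma on rational fourfolds} gives $e_0=\pm1$, but the curves involved have degree $e$ with no control on $(3-2e)^2$. In short, your proposal reproduces the paper's argument and correctly diagnoses its weak point, but --- like the paper --- it only completes the proof when the transported index equals $1$.
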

\begin{proof}
Let $\mathscr{C}\to S$ be the rational curve defined by $S$. By
assumption, we have
$$
[\mathscr{C}]^*x\cdot [\mathscr{C}]^*y =-e_0(x\cdot y), \quad
x,y\in\HH^4(X,\Z)_{\mathrm{tr}}.
$$
Let $\Gamma_S$ be the surface parameterizing the secant lines as
constructed above. Then the equation \eqref{key equation 2} gives
$$
[\Gamma_S]^*u\cdot [\Gamma_S]^*v=e_0(2e-3)b(u,v),\quad
u,v\in\HH^2(F,\Z)_{\mathrm{tr}}.
$$
Note that $e_0(2e-3)$ is again odd. Take
$a=\frac{e_0(2e-3)-1}{2}\in\Z$. Let $l\subset X$ be a general line
and $f:S_l\to F$ the surface of lines meeting $l$. It is known that
$$
f^*u\cdot f^*v=2b(u,v), \quad u,v\in\HH^2(F,\Z)_{\mathrm{tr}}.
$$
Then we can take $S_1=S\coprod S_l$ and
$\Gamma_1=\Gamma_S-a\Gamma_f$, where $\Gamma_f$ is the graph of $f$.
Then we have
$$
[\Gamma_1]^*u\cdot [\Gamma_1]^*v=b(u,v),\quad u,v\in\HH^2(F,\Z)_\mathrm{tr}.
$$
This means that $F$ is potentially of Jacobian type.
\end{proof}
\begin{proof}[Proof of Theorem \ref{thm on rationality imply potential}]
This is essentially a combination of Lemma \ref{lemma on rational
fourfolds} and Proposition \ref{proposition on odd index}. The only
thing that we need to note is that in the proof of Lemma \ref{lemma
on rational fourfolds}, we can choose the point $x'$ general enough
such that $C_t$ is well-positioned for general $t$.
\end{proof}

\end{document}